\documentclass[twoside, a4paper,10pt,draft]{amsart}

% Standard packages
\usepackage{amsmath}  
\usepackage{amsfonts}
\usepackage{amsthm}  
\usepackage{amssymb}
\usepackage[alphabetic, abbrev]{amsrefs}
\usepackage{enumerate}
\usepackage[all]{xy}
\SelectTips{cm}{}
\SilentMatrices
\usepackage[pdfauthor={Steffen Sagave and Christian Schlichtkrull}]{hyperref}
%\externaldocument[DGSSS-]{diagramspaces11} % uses diagramspace10.aux
% Use \cite[Lemma \ref*{DGSSS-monad-adjunction}]{Sagave-S_diagram}
% to cite something there
%\externaldocument[DGSSS-]{diagramspaces-submit} 

% Theorem environments
\numberwithin{equation}{section}
\theoremstyle{plain}
\newtheorem{lemma}[subsection]{Lemma}
\newtheorem{theorem}[subsection]{Theorem}

\newtheorem{corollary}[subsection]{Corollary}
\newtheorem{proposition}[subsection]{Proposition}
\theoremstyle{definition}

\newtheorem{definition}[subsection]{Definition}
\newtheorem{example}[subsection]{Example}
\newtheorem{remark}[subsection]{Remark}

% blackboard bold letters 

\newcommand{\mF}{{\mathbb F}}

\newcommand{\mN}{{\mathbb N}}

\newcommand{\mR}{{\mathbb R}}
\newcommand{\mS}{{\mathbb S}}
\newcommand{\mZ}{{\mathbb Z}}

% cal. letters
\newcommand{\cA}{{\mathcal A}}

\newcommand{\cD}{{\mathcal D}}
\newcommand{\cE}{{\mathcal E}}

\newcommand{\cI}{{\mathcal I}}
\newcommand{\cJ}{{\mathcal J}}

\newcommand{\cK}{{\mathcal K}}
\newcommand{\cL}{\mathcal{L}}

\newcommand{\cN}{{\mathcal N}}
\newcommand{\cO}{{\mathcal O}}

\newcommand{\cU}{{\mathcal U}}
\newcommand{\cV}{{\mathcal V}}
\newcommand{\cW}{{\mathcal W}}

% Some standard math operators
\DeclareMathOperator{\id}{id}

\DeclareMathOperator{\Ev}{Ev}

\DeclareMathOperator{\const}{const}
\DeclareMathOperator{\colim}{colim}
\newcommand{\hocolim}{\operatornamewithlimits{hocolim}}

\DeclareMathOperator{\THH}{THH}

\DeclareMathOperator{\concat}{\sqcup}

% Some symbols

\newcommand{\ovl}{\overline}

\newcommand{\ot}{\leftarrow}
\newcommand{\iso}{\cong}

\newcommand{\op}{{\mathrm{op}}}
\newcommand{\sm}{\wedge}

\newcommand{\Gr}{\mathit{Gr}}

\newcommand{\SpO}{\mathit{Sp}^{\! O}}
\newcommand{\Spsym}{\mathit{Sp}^{\Sigma}}
\newcommand{\bld}[1]{{\mathbf{#1}}}

\newcommand{\VintO}{\mathcal V\!\int \! O}
\newcommand{\VintOd}{\mathcal V\!\int \! O(d\oplus-)}
\newcommand{\red}{\widehat}

\newcommand{\cof}{\mathrm{cof}}
\newcommand{\cy}{\mathrm{cy}}
\newcommand{\Det}{\mathrm{Det}}
\newcommand{\even}{\mathrm{ev}}
\newcommand{\gp}{\mathrm{gp}}

\newcommand{\GL}{\mathrm{GL}}
 
\newcommand{\rep}{\mathrm{rep}}
\newcommand{\Top}{\mathit{Top}} 
\newcommand{\TopCat}{\mathit{TopCat}}

\newcommand{\xr}{\xrightarrow}
\newcommand{\xl}{\xleftarrow}

%VIRTUAL VECTOR BUNDLES COMMANDS

\newcommand{\mD}{\mathbb D}
\newcommand{\Grad}{\mathit{Grad}}
\newcommand{\MO}{\mathit{MO}}
\newcommand{\MOP}{\mathit{MOP}}
\newcommand{\MSO}{\mathit{MSO}}

\newcommand{\MSOPeven}{\mathit{MSOP}_{\mathrm{ev}}}
\newcommand{\MU}{\mathit{MU}}
\newcommand{\MUP}{\mathit{MUP}}

% Remarks 

% Links 

% Examples: \arxivlink{math/0611808v3} \arxivlink{0711.4499v2}

%Create "Draft" stemp
\usepackage[pdftex]{graphics,color} 
\usepackage{eso-pic}
\usepackage{scrtime}
%\usepackage{ifdraft}
%\ifdraft{%
%\AddToShipoutPicture{\put(30,30){\resizebox{!}{.4cm}%
%   {{{\color[gray]{0.8}\texttt{Preliminary draft, compiled \the\year-\the\month-\the\day~at \thistime}}}}}}
%   {{{\color[gray]{0.8}\texttt{Preliminary draft (version of \the\year-\the\month-\the\day)}}}}}}
%}{}

\begin{document}
\title{Virtual vector bundles and graded Thom spectra}

\author{Steffen Sagave} \address{Radboud University Nijmegen, IMAPP, PO Box 9010, 6500 GL Nijmegen,\newline The Netherlands}  \email{s.sagave@math.ru.nl}

\author{Christian Schlichtkrull} \address{Department of Mathematics, University of Bergen, P.O. Box 7803, 5020 Bergen, \newline Norway} \email{christian.schlichtkrull@math.uib.no}

\date{\today}

\begin{abstract}
We introduce a convenient framework for constructing and analyzing orthogonal Thom spectra arising from virtual vector bundles. This framework enables us to set up a theory of orientations and graded Thom isomorphisms with good multiplicative properties. The theory is applied to the analysis of logarithmic structures on commutative ring spectra. 
\end{abstract}   

\maketitle

\section{Introduction}
Classically, the starting point for the construction of a Thom spectrum is a sequence of spaces $X_0\to X_1\to X_2\to\dots$, together with a compatible family of maps $f_n\colon X_n\to BO(n)$. The Thom space $T(f_n)$ of the resulting vector bundle on $X_n$ then constitutes the $n$th space in the corresponding Thom spectrum. From a slightly different point of view, one may start with a space $X$ and a map 
$X\to BO$ to the classifying space for stable vector bundles, then choose a suitable filtration of this map, and proceed with this data as above. This point of view has been developed in great detail by Lewis~\cite[Section~IX]{LMS}. It is also of interest to consider Thom spectra derived from virtual vector bundles in the sense of maps $f\colon X\to BO\times \mZ$ with target the classifying space for $KO$-theory. The naive approach to this is to apply the above procedure over each of the components $BO\times\{n\}$ and then pass to the $n$-fold suspension of the resulting Thom spectrum for each $n$. However, this approach does not reflect the $E_{\infty}$ structure of $BO\times \mZ$ and some care is needed in order to define a graded Thom spectrum functor with good multiplicative properties.

One of the main objectives of the present paper is to introduce a convenient framework for the construction and analysis of graded Thom spectra. Let $\cO$ be the topological category with objects the standard inner product spaces $\mR^n$ for $n\geq 0$, and morphisms the linear isometric isomorphisms. This is a 
permutative topological category under direct sum. We define a topological category $\cW$ by applying Quillen's localization construction \cite{Grayson-higher} to $\cO$, that is, $\cW=\cO^{-1}\cO$. This definition makes $\cW$ a permutative topological category whose classifying space is a model of $BO\times \mZ$. It follows that the set of objects in  $\cW$ has a canonical $\mZ$-grading and we write $\cW_{\{n\}}$ for the $n$th component of $\cW$. 

Writing $\Top$ for the category of compactly generated weak Hausdorff spaces, we define a 
\emph{$\cW$-space} to be a continuous functor from $\cW$ to $\Top$. Let $\Top^{\cW}$ be the functor category of $\cW$-spaces. The permutative structure of $\cW$ induces a symmetric monoidal convolution product on $\Top^{\cW}$, and we use the term \emph{commutative \mbox{$\cW$-space} monoid} for a commutative monoid in $\Top^{\cW}$. Such commutative monoids arise naturally, and in particular we show that the definition of the classical Stiefel manifolds can be upgraded to give a commutative $\cW$-space monoid that we denote by $V$. There is also an oriented Stiefel $\cW$-space 
$\tilde V$, which gives rise to a commutative $\cW$-space monoid $\tilde V_{\even}$ when restricted to even degrees.  

We shall prove that the Stiefel $\cW$-space $V$ defines a graded frame bundle over a suitable graded version of the Grassmannians which we denote by $\Gr$. In order to explain this properly we need the permutative topological category $\cV$ with objects $\mR^n$ for $n\geq 0$, and morphisms the (not necessarily surjective) linear isometries. The graded Grassmannian $\Gr$ is then realized as a commutative monoid in the corresponding symmetric monoidal category of $\cV$-spaces $\Top^{\cV}$, and there is a chain of Quillen equivalences of topological model categories
\begin{equation}\label{eq:intro-Quillen-equivalences}
\Top/\Gr_{h\cV} \simeq \Top^{\cV}/\Gr\simeq \Top^{\cW}/V\simeq \Top^{\cW}
\end{equation}
where $\Gr_{h\cV}$ denotes the homotopy colimit of $\Gr$ over $\cV$ and the weak equivalences in $\Top^{\cV}$ and $\Top^{\cW}$ are detected by the respective homotopy colimit functors. Since $\Gr_{h\cV}$ is a model of $BO\times \mZ$, it follows that $\cW$-spaces represent virtual vector bundles via these equivalences. 

The category $\Top^\cW$ is closely related to the symmetric monoidal category of orthogonal spectra 
$\SpO$ introduced in~\cite{MMSS}: There is an adjunction
%\begin{equation}\label{eq:S-Omega-adjunction-intro} 
\[
\mS^{\cW}\colon \Top^{\cW}\rightleftarrows \SpO\colon \Omega^{\cW}
\]
%\end{equation} 
whose left adjoint $\mS^{\cW}$ is strong symmetric monoidal with respect to the convolution product on 
$\Top^{\cW}$  and the smash product on $\SpO$. Applying $\mS^{\cW}$ to $V$ we get a commutative orthogonal ring spectrum that turns out to be a model of the periodic unoriented cobordism spectrum. Motivated by this we write $\MOP=\mS^{\cW}[V]$.
The graded Thom spectrum functor on $\Top/\Gr_{h\cV}$ is now defined to be the composition of the functors
\begin{equation}\label{eq:intro-graded-Thom-functor}
T\colon \Top/Gr_{h\cV}\to \Top^{\cV}/Gr\to \Top^{\cW}/V\xr{\mS^{\cW}} \SpO/\MOP
\end{equation}
as detailed in Section~\ref{sec:lifting-space-level}.
%Here we  follow Lewis \cite[Section~IX]{LMS} by precomposing with a Hurewicz fibrant replacement functor $\Gamma$ %to ensure that the composite functor takes weak homotopy equivalences over $\Gr_{h\cV}$ to stable equivalences over 
%$\MOP$. 
One may also define a graded Thom spectrum functor on $\Top^{\cV}/\Gr$ by the composition of the two last functors in \eqref{eq:intro-graded-Thom-functor} and we show in Propositions~\ref{prop:T-lax-monoidal} and \ref{prop:V-homotopy-invariance} that this leads to a lax symmetric monoidal homotopy functor that closely resembles the classical construction of Thom spectra from compatible sequences of vector bundles.  

The next theorem (appearing as Theorem~\ref{thm:D-algebra-Thom-functor} in Section~\ref{sec:lifting-space-level}) shows that the graded Thom spectrum functor in \eqref{eq:intro-graded-Thom-functor} has good multiplicative properties. 

\begin{theorem}\label{thm:intro-multiplicative-Thom-functor}
Let $\cD$ be an operad augmented over the Barratt--Eccles operad. Then the graded Thom spectrum functor in \eqref{eq:intro-graded-Thom-functor} induces a functor of $\cD$-algebras
\[
T\colon \Top[\cD]/\Gr_{h\cV} \to \SpO[\cD]/\MOP
\]
\end{theorem}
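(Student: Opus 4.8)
The plan is to exhibit the graded Thom spectrum functor of \eqref{eq:intro-graded-Thom-functor} as a composite of four functors and to promote each of them to the $\cD$-algebra setting, making systematic use of the augmentation $\cD\to\cE$ to the Barratt-Eccles operad $\cE$. This augmentation is exactly what equips the four slice categories with enough structure to carry $\cD$-algebras. On the one hand, $\Gr$, $V$ and $\MOP$ are \emph{strict} commutative monoids in the symmetric monoidal categories $\Top^{\cV}$, $\Top^{\cW}$ and $\SpO$, so the slices $\Top^{\cV}/\Gr$, $\Top^{\cW}/V$ and $\SpO/\MOP$ are again symmetric monoidal and a $\cD$-algebra in such a slice is simply a $\cD$-algebra together with a $\cD$-algebra map to the base, where the base is regarded as a $\cD$-algebra via $\cD\to\cE\to\mathrm{Com}$. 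On the other hand $\Gr_{h\cV}$ is only an $\cE$-algebra: the homotopy colimit over a permutative category is naturally lax monoidal, but its symmetry constraint holds only up to the coherent homotopies governed by $\cE$, so it sends commutative monoids to $\cE$-algebras rather than to strict commutative monoids. Restricting along $\cD\to\cE$ turns $\Gr_{h\cV}$ into a $\cD$-algebra, and $\Top[\cD]/\Gr_{h\cV}$ then denotes the category of $\cD$-algebras equipped with a $\cD$-algebra map to $\Gr_{h\cV}$.

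Granting this bookkeeping, the outermost factor is immediate. Since $\mS^{\cW}$ is strong symmetric monoidal and $\mS^{\cW}(V)=\MOP$ as commutative orthogonal ring spectra, it induces a strong symmetric monoidal, $\Top$-enriched functor $\Top^{\cW}/V\to\SpO/\MOP$ of slice categories, and any such functor carries $\cD$-algebras to $\cD$-algebras for every operad $\cD$ in $\Top$. This produces the functor $\Top^{\cW}[\cD]/V\to\SpO[\cD]/\MOP$.

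The content lies in the two middle factors, which realize the Quillen equivalences $\Top/\Gr_{h\cV}\simeq\Top^{\cV}/\Gr$ and $\Top^{\cV}/\Gr\simeq\Top^{\cW}/V$ of \eqref{eq:intro-Quillen-equivalences}. The second of these stays within strict commutative monoids, since both $\Gr$ and $V$ are strict commutative monoids; here I would show that the frame-bundle comparison between $\cV$-spaces over $\Gr$ and $\cW$-spaces over $V$ is a symmetric monoidal Quillen equivalence, which therefore lifts to a Quillen equivalence of the associated $\cD$-algebra slice categories for any $\cD$ in $\Top$. The first equivalence is the delicate one: $\Gr_{h\cV}$ is the $\cE$-algebra obtained from the commutative $\cV$-space monoid $\Gr$ by the homotopy colimit, so one must verify that the comparison functors between $\cV$-spaces over $\Gr$ and spaces over $\Gr_{h\cV}$ — the homotopy colimit and its point-set adjoint — are $\Top$-enriched and compatible with the lax, respectively $\cE$-, monoidal structures in precisely the way needed to transport a $\cD$-algebra structure together with a $\cD$-algebra map to the base. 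This is where the augmentation $\cD\to\cE$ is indispensable: it is what allows one to restrict the $\cE$-action on $\Gr_{h\cV}$, with all its coherences, to a $\cD$-action. The lax structure map $X_{h\cV}\times Y_{h\cV}\to (X\tensor Y)_{h\cV}$, built from the identification $B(\cV\times\cV)\iso B\cV\times B\cV$ and the monoidal product functor $\cV\times\cV\to\cV$, is what realizes $\Gr_{h\cV}$ as the $\cE$-algebra image of $\Gr$, and it must be carried along at every stage.

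Finally, the Hurewicz fibrant replacement tacitly built into \eqref{eq:intro-graded-Thom-functor} (following Lewis \cite[Section~IX]{LMS}) should be performed inside the category of $\cD$-algebras over $\Gr_{h\cV}$, by a functorial fibrant replacement whose underlying map of spaces is a Hurewicz fibration. This keeps the composite a functor of $\cD$-algebras and guarantees, just as in the space-level case, that it takes weak equivalences of $\cD$-algebras over $\Gr_{h\cV}$ to stable equivalences of $\cD$-algebras over $\MOP$. I expect the main obstacle to be exactly the multiplicative analysis of the first Quillen equivalence at the slice level: one has to keep the Barratt-Eccles coherences, the maps to the base, and the relevant cofibrancy conditions under control simultaneously. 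This is also the precise reason for restricting to operads augmented over the Barratt-Eccles operad rather than over an arbitrary $E_\infty$ operad — the homotopy colimit of a permutative diagram category produces an $\cE$-structure and nothing finer, so an $\cE$-augmentation is what makes the transfer go through.
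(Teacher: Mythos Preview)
Your outline is correct and matches the paper's approach. The paper itself gives no detailed proof: it restates the result as Theorem~\ref{thm:D-algebra-Thom-functor} and defers to ``a slight modification of the argument used in the proof of \cite[Corollary~6.9]{Schlichtkrull-Thom_symmetric}''. Your decomposition into the constituent functors, your identification of the first step $\Top/\Gr_{h\cV}\to\Top^{\cV}/\Gr$ as the delicate one, and your explanation of why the augmentation $\cD\to\cE$ is needed are all accurate.

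The one mechanism you leave vague is precisely how that first step is made multiplicative. In the paper this is done via the bar resolution $\bar\Gr$ (the adjunctions~\eqref{eq:TopV/Gr-adjunctions} together with the identification $\colim_{\cV}\bar\Gr=\Gr_{h\cV}$), and the key technical input is Lemma~\ref{lem:Barratt-Eccles-bar-resolution}: for $\cD$ augmented over $\cE$, the bar resolution $X\mapsto\overline X$ lifts to an endofunctor on $\cD$-algebras with $\overline X\to X$ a map of $\cD$-algebras. This is the $\cW$-space analogue of \cite[Lemma~6.7]{Schlichtkrull-Thom_symmetric} and is what replaces your informal appeal to ``the lax, respectively $\cE$-, monoidal structures''. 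One minor correction: the middle functor $p^*\colon\Top^{\cV}/\Gr\to\Top^{\cW}/V$ is only \emph{lax} symmetric monoidal (as the paper notes just before Section~\ref{sec:lifting-space-level}), not strong, but lax is of course all that is needed to transport $\cD$-algebras.
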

Here we require $\cD$ to be augmented over the Barratt--Eccles operad in order for $\Gr_{h\cV}$ to inherit the structure of a $\cD$-algebra. This is not a serious restriction since every operad is equivalent to one augmented over the Barratt--Eccles operad.
In the case where $\cD$ is an $E_{\infty}$ operad, we see that the graded Thom spectrum functor takes $E_{\infty}$ spaces over $\Gr_{h\cV}$ to $E_{\infty}$ orthogonal ring spectra over $\MOP$.

\subsection{Orientations and the graded Thom isomorphism}
The theory of orientations also has an attractive formulation in the setting of $\cW$-spaces. For a commutative orthogonal ring spectrum $R$ there is a commutative \mbox{$\cW$-space} monoid $\GL_1^{\cW}(R)$ of \emph{graded $\cW$-space units}. This is the $\cW$-space analogue of the corresponding construction for symmetric spectra introduced by the authors in~\cite{Sagave-S_diagram}. 
In the case of the Eilenberg--Mac Lane spectrum $H\mZ/2$, the $\cW$-space units $\GL_1^{\cW}(H\mZ/2)$ is equivalent to the $0$th part $V_{\{0\}}$ of the Stiefel $\cW$-space, whereas for $H\mZ$, the graded 
$\cW$-space  units $\GL_1^{\cW}(H\mZ)$ is equivalent to the $0$th part $\tilde V_{\{0\}}$ of the oriented Stiefel $\cW$-space. For a general orthogonal ring spectrum $R$, an $R$-orientation of a $\cW$-space is simply encoded by a map to $\GL_1^{\cW}(R)$. Restricted to the degree 0 part, this approach to orientations reduces to the orientation theory developed in \cite{May-ring-spaces} as follows from the fibration sequence
\begin{equation}\label{eq:BO-BGL(R)-sequence}
\GL_1^{\cW}(R)_{h\cW_{\{0\}}}\to BO\to B\GL_1(R),
\end{equation}
where $\GL_1(R)$ denotes the traditional (ungraded) grouplike monoid of units (see Section~\ref{subsec:GL_1W(R)-orientations} for details). For an $E_{\infty}$ ring spectrum, Ando et.\ al.\ \cite{Ando_B_G_H_R-units} construct $A_{\infty}$ and $E_{\infty}$ $R$-algebra Thom spectra associated to spaces over $B\GL_1(R)$. Using the graded units of ring spectra introduced by the authors, it is possible to give a unified treatment that includes both Thom spectra for spaces over 
$B\GL_1(R)$ and the present approach to graded Thom spectra. We plan to return to this in a separate paper.

In the present paper we find it convenient to work with a general notion of orientations relative to a grouplike commutative $\cW$-space monoid $W$. This then leads to the following formulation of the corresponding graded Thom isomorphism: We consider a commutative orthogonal ring spectrum $R$ which is an algebra over the degree 0 part $\mS^{\cW}[W_{\{0\}}]$. In this situation there is an associated commutative periodic ring spectrum $RP\simeq \bigvee_{n\in d\mZ}\Sigma^nR$, where $d$ is a generator for the image of $\pi_0(W_{h\cW})$ in $\pi_0(B\cW)\cong \mZ$.
The following theorem is then obtained by combining Theorems~\ref{thm:R-Thom-iso} and \ref{thm:mult-R-Thom-iso}.

\begin{theorem}\label{thm:intro-Thom-iso}
For a $\cW$-space $X$ equipped with a $W$-orientation, there is a stable equivalence of $R$-module spectra $R\wedge \mS^{\cW}[X]\simeq RP\bigtriangleup X_{h\cW}$. This is a stable equivalence of $E_{\infty}$ $R$-algebras provided that $X$ is a commutative $\cW$-space monoid equipped with a multiplicative orientation.  
\end{theorem}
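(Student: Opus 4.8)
The additive part is Theorem~\ref{thm:R-Thom-iso} and the multiplicative refinement is Theorem~\ref{thm:mult-R-Thom-iso}; I sketch a proof of the two underlying results. The strategy is to transport the classical Thom isomorphism argument into $\Top^{\cW}$, with the $\mZ$-grading of $\cW$ producing the periodic factor $RP$. For a $\cW$-space $X$ equipped with a $W$-orientation $f\colon X\to W$, the first step is to construct a natural comparison map of $R$-modules
\[
\phi_X\colon R\wedge\mS^{\cW}[X]\longrightarrow RP\bigtriangleup X_{h\cW}
\]
in the usual way. Starting from the Thom-diagonal map $\mS^{\cW}[X]\to\Sigma^{\infty}_+(X_{h\cW})\wedge\mS^{\cW}[X]$, which is induced levelwise from the diagonal of $X(\mR^n)$ together with the canonical map $X(\mR^n)\to X_{h\cW}$, one composes with the Thom class $\mS^{\cW}[X]\to RP$ on the right-hand factor --- this class is determined by $f$ together with the $\mS^{\cW}[W_{\{0\}}]$-algebra structure on $R$, and $RP$ is an $R$-algebra --- and then passes to the $R$-linear extension. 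Because $\mS^{\cW}$ is strong symmetric monoidal and $(-)_{h\cW}$ is lax symmetric monoidal, $\phi_X$ refines to a map of $E_{\infty}$ $R$-algebras whenever $X$ is a commutative $\cW$-space monoid and $f$ a monoid map. The theorem is then precisely the assertion that $\phi_X$ is a stable equivalence.

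The second step handles the universal examples $X=W$ with $f=\id$, and more generally $X=\const(Y)\times W$ for an arbitrary space $Y$. Here the shearing self-map of $W\times W$ sending $(a,b)$ to $(a,ab)$ is an isomorphism of $\cW$-spaces since $W$ is grouplike; feeding this into the definition of $\phi_X$ and computing directly shows that $\phi_X$ is a stable equivalence in these cases. The same computation produces the splitting $RP\simeq\bigvee_{n\in d\mZ}\Sigma^nR$, with summands indexed by the image $d\mZ$ of $\pi_0(W_{h\cW})$ in $\pi_0(B\cW)\cong\mZ$.

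The third step deduces the general case by a gluing argument over cells. On $\cW$-spaces over $W$, both functors $X\mapsto R\wedge\mS^{\cW}[X]$ and $X\mapsto RP\bigtriangleup X_{h\cW}$ take homotopy cocartesian squares to homotopy cocartesian squares of $R$-modules and commute with filtered homotopy colimits; this uses that $\mS^{\cW}$ is a left Quillen functor, that $R\wedge(-)$ and $(-)_{h\cW}$ preserve homotopy colimits, and the gluing and cofibrancy properties of the model structures on $\Top^{\cW}$ and $\Top^{\cW}/W$ established earlier. By the Quillen equivalences \eqref{eq:intro-Quillen-equivalences}, a $W$-oriented $\cW$-space corresponds to an oriented space over $\Gr_{h\cV}\simeq BO\times\mZ$; building this base from cells, observing that over each cell the $\cW$-space becomes trivial and hence is covered by the second step, and using that $\phi_X$ is natural in $X$, one reduces --- via the gluing argument and a passage to filtered colimits --- to the universal examples. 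Hence $\phi_X$ is a stable equivalence for every $W$-oriented $X$, and the multiplicative statement follows by combining this with the $E_{\infty}$ refinement of $\phi_X$ from the first step, together with the comparison between operadic and strictly commutative structures used to equip $\Gr_{h\cV}$ with its $\cD$-algebra structure in Theorem~\ref{thm:intro-multiplicative-Thom-functor}.

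The step I expect to be the main obstacle is the gluing argument of the third step. One must check rigorously that $X\mapsto R\wedge\mS^{\cW}[X]$ and $X\mapsto RP\bigtriangleup X_{h\cW}$ have the claimed homotopy-invariance and gluing behaviour --- in particular that the grading twist in $RP\bigtriangleup(-)$ interacts correctly with cell attachments --- and that over a single cell the restricted $\cW$-space, together with its orientation, really coincides with one of the universal examples of the second step. This amounts to a parametrized and graded form of the classical Thom-isomorphism argument, carried out over $BO\times\mZ$ rather than over a point, and keeping the grading bookkeeping consistent throughout is the delicate part.
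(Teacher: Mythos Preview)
Your overall architecture---Thom diagonal plus cell reduction---matches the paper's, but the implementation differs in a way that creates a genuine gap in Step~3.

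The paper constructs the comparison map essentially as you do in Step~1; this is the map $\delta_W$ of~\eqref{eq:W_thom-diagonal}, and Lemma~\ref{lem:W_thom-diagonal} records its monoidality. The crucial difference is in the base case and the cell reduction. The paper does \emph{not} take $X=W$ or $\const(Y)\times W$ as universal examples, nor does it pass through the Quillen equivalences~\eqref{eq:intro-Quillen-equivalences}. Instead it works entirely inside $\Top^{\cW}$ and inducts over cells of the $\cW$-model structure: the base case is a free $\cW$-space $F^{\cW}_{(d_1,d_2)}(*)$ equipped with an arbitrary map to $W$, and for such an $X$ the map $\delta_W$ is precisely the map of Proposition~\ref{prop:SW-M-equivalence}, which is a stable equivalence because $W$ is grouplike (Lemma~\ref{lem:M-W-equivalence}). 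One then extends to $F^{\cW}_{(d_1,d_2)}(D^n)$ by homotopy invariance, to $F^{\cW}_{(d_1,d_2)}(S^n)$ by induction on $n$, and to all cofibrant $X$ by the usual cell-attachment argument.

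Your Step~3 invokes the equivalences~\eqref{eq:intro-Quillen-equivalences} to transport a $W$-oriented $\cW$-space to a space over $\Gr_{h\cV}$. But those equivalences are specific to the Stiefel $\cW$-space $V$ and the Grassmannian $\Gr$; for a general grouplike $W$ there is no such chain, and a $W$-orientation is data over $W$, not over $V$. So the detour does not go through as stated. Relatedly, your shearing map $(a,b)\mapsto(a,ab)$ on $W\times W$ is not an isomorphism of $\cW$-spaces---grouplike only gives invertibility on $\pi_0$ of the homotopy colimit---and the paper's substitute is exactly Lemma~\ref{lem:M-W-equivalence}, which says the relevant multiplication map is a $\cW$-equivalence.

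Two further points you are missing. First, a $W$-orientation in the paper is a zig-zag $X\xleftarrow{\simeq}X'\to W$ (Definition~\ref{def:W-orientation}), not a single map $f\colon X\to W$, so the Thom isomorphism is a \emph{chain} of stable equivalences rather than one map $\phi_X$; the bar resolution $\overline{X'}$ mediates, see~\eqref{eq:W-hocolim-thom-diagonal}. Second, the passage from cofibrant $X$ to merely $\mS^{\cW}$-good $X$ is where Proposition~\ref{prop:bar-SW-good} and Lemma~\ref{lem:W-hocolim-thom-diagonal} enter; this is not automatic and needs to be addressed.
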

Here $RP\bigtriangleup X_{h\cW}$ denotes an $R$-module (respectively an $E_{\infty}$ $R$-algebra) whose underlying orthogonal spectrum is stably equivalent to 
$\bigvee_{n\in d\mZ}\Sigma^nR\wedge(X_{h\cW_{\{n\}}+})$. More generally, we establish a multiplicative version of the graded Thom isomorphism for any operad augmented over the Barratt--Eccles operad (see Section~\ref{subsec:mult-orientations}).

The theorem leads to the following identification of the $R$-homology of $\mS^{\cW}[X]$. 
\begin{corollary}
For a commutative $\cW$-space monoid $X$ equipped with a multiplicative $W$-orientation, there is an isomorphism of graded rings
\[
R_*(\mS^{\cW}[X])\cong \bigoplus_{n\in d\mZ}\Sigma^nR_*(X_{h\cW_{\{n\}}}).
\]
\end{corollary}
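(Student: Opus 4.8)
The plan is to deduce the corollary directly from Theorem~\ref{thm:intro-Thom-iso} (equivalently, from Theorems~\ref{thm:R-Thom-iso} and~\ref{thm:mult-R-Thom-iso}) by passing to stable homotopy groups. A multiplicative $W$-orientation on the commutative $\cW$-space monoid $X$ provides, by that theorem, a stable equivalence of $E_\infty$ $R$-algebras
\[
R\wedge \mS^{\cW}[X] \simeq RP\bigtriangleup X_{h\cW}.
\]
Since an equivalence of $E_\infty$ $R$-algebras is in particular an equivalence of the underlying orthogonal ring spectra, applying $\pi_*$ yields an isomorphism of graded rings
\[
R_*(\mS^{\cW}[X]) = \pi_*\bigl(R\wedge \mS^{\cW}[X]\bigr) \cong \pi_*\bigl(RP\bigtriangleup X_{h\cW}\bigr).
\]

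Next I would identify the right-hand side additively. By construction the underlying orthogonal spectrum of $RP\bigtriangleup X_{h\cW}$ is stably equivalent to the wedge $\bigvee_{n\in d\mZ}\Sigma^nR\wedge(X_{h\cW_{\{n\}}+})$. Stable homotopy groups carry wedges to direct sums, and for any space $Z$ one has $\pi_*(\Sigma^nR\wedge Z_+)\cong \Sigma^nR_*(Z)$; combining these identities gives a natural isomorphism of graded abelian groups
\[
\pi_*\bigl(RP\bigtriangleup X_{h\cW}\bigr)\cong \bigoplus_{n\in d\mZ}\Sigma^nR_*(X_{h\cW_{\{n\}}}).
\]
Transporting the ring structure on $\pi_*(RP\bigtriangleup X_{h\cW})$ along this isomorphism equips the right-hand side with the graded ring structure in the statement, and composing with the isomorphism of the previous paragraph proves the corollary.

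The only point requiring care is the compatibility of the multiplication with the wedge decomposition: one checks that under the identification above the product restricts to pairings $\Sigma^mR_*(X_{h\cW_{\{m\}}})\otimes_{R_*}\Sigma^nR_*(X_{h\cW_{\{n\}}})\to \Sigma^{m+n}R_*(X_{h\cW_{\{m+n\}}})$ induced by the monoid multiplication $X_{h\cW_{\{m\}}}\times X_{h\cW_{\{n\}}}\to X_{h\cW_{\{m+n\}}}$ of $X$ --- here using that the $\mZ$-grading on $\cW$ is additive under $\oplus$ --- together with the multiplication of $R$ and concatenation of suspension coordinates. This is precisely the structure produced in the proof of Theorem~\ref{thm:mult-R-Thom-iso}, so no further argument is needed; the substance of the corollary is contained entirely in Theorem~\ref{thm:intro-Thom-iso}, and what remains is the bookkeeping of extracting homotopy groups, the mildest obstacle being to keep the $R$-module structures and grading conventions aligned.
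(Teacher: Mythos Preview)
Your proposal is correct and matches the paper's approach exactly: the paper treats this corollary as immediate from Theorems~\ref{thm:R-Thom-iso} and~\ref{thm:mult-R-Thom-iso} (the detailed version in Section~\ref{subsec:mult-orientations} is stated with a bare \qed), and you have simply spelled out the passage to homotopy groups and the identification of $\pi_*(RP\bigtriangleup X_{h\cW})$ with the graded direct sum that the paper leaves implicit.
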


As a special case of this, a multiplicative orientation of a commutative $\cW$-space monoid $X$ with respect to the evenly graded oriented Stiefel $\cW$-space $\tilde V_{\even}$ gives rise to an evenly graded Thom isomorphism 
\[
H_*(\mS^{\cW}[X],\mZ)\cong \bigoplus_{n\in \mZ} \Sigma^{2n}H_*(X_{h\cW_{\{2n\}}},\mZ).
\]
Having developed the orientation theory for $\cW$-spaces, we extend this to spaces over $\Gr_{h\cV}$ via the Quillen equivalences in \eqref{eq:intro-Quillen-equivalences}. Combining Theorems~\ref{thm:intro-multiplicative-Thom-functor} and~\ref{thm:intro-Thom-iso} we then get multiplicative Thom isomorphisms for graded Thom spectra (see Section~\ref{subsec:space-level-Thom-iso} for details).

\subsection{Applications to logarithmic structures on commutative ring spectra}
Mimicking the analogous notion in the setting of symmetric spectra~ \cite{Sagave-S_diagram,RSS_LogTHH-I}, a pre-log structure on a commutative orthogonal ring spectrum $R$ is given by a commutative $\cW$-space monoid $M$ and a map of commutative $\cW$-space monoids $M\to\Omega^{\cW}(R)$. There is a corresponding notion of a log structure and we prove in 
Proposition~\ref{prop:pre-log-orientation} that a pre-log structure as above defines a $\GL_1^{\cW}(R)$-orientation of $M$ if and only if the associated log structure is trivial. This relates pre-log structures to orientation theory.

Our results in the present paper are used in joint work with Rognes~\cite{RSS_LogTHH-I, RSS_LogTHH-II} on topological logarithmic Hochschild homology (in short log THH). We discuss these applications in Section~8 which also contains an introduction to this circle of ideas. 
One of the features of log THH is that it allows one to set up homotopy cofiber sequences that resemble the localization sequences known from algebraic \mbox{K-theory}. We explain in Section~\ref{subsec:logTHH} how our work leads to new log THH localization sequences such as the homotopy cofiber sequence 
\[
\THH(\MU) \to \THH(\MUP_{\geq0})\to \THH(\MUP_{\geq0},V^{\cU}_{\geq 0})
\]
considered in Example~\ref{ex:logTHHMUP-sequence}

\subsection{Organization} In Section~\ref{sec:cat-of-linear-isometries} we review results about topological categories and introduce the categories $\cV$ and $\cW$. Section~\ref{sec:W-homotopy-theory} contains results about the homotopy theory of $\cW$-spaces, and Section~\ref{sec:W-spaces-SpO} is about the interplay of $\cW$-spaces and orthogonal spectra. In Section~\ref{sec:graded-Grass} we explain how Grassmannians and Stiefel manifolds fit into our setup of diagram spaces and show how they give rise to periodic cobordism spectra.  We set up the graded Thom spectrum functor in Section~\ref{sec:graded-Thom-functor}, and in Section~\ref{sec:graded-Thom-iso} we establish the graded Thom isomorphism in Theorem~\ref{thm:intro-Thom-iso}. Section~\ref{sec:log-structures} contains the applications of our results to the analysis of topological logarithmic structures. In Appendix~\ref{app-sec:SW-goodness} we establish homotopy invariance properties of the functor $\mS^{\cW}$. 

\subsection{Acknowledgments} The authors would like to thank the
  referee for useful comments on the paper. 

\section{Categories of linear isometries}\label{sec:cat-of-linear-isometries}
We first review some general material on topological categories before turning to the relevant categories of linear isometries.

\subsection{Topological categories}\label{subsec:Top-categories}
Let $\Top$ denote the category of compactly generated weak Hausdorff topological spaces. For us a \emph{topological category} will mean a (not necessarily small) category enriched in $\Top$. This means that the morphism sets are topologized so as to be objects in $\Top$ and that composition is continuous.  For a small topological category $\cK$, a \emph{$\cK$-space} is by definition a continuous functor $X\colon\cK\to \Top$, and we write $\Top^{\cK}$ for the topological category of $\cK$-spaces. 

In the following we briefly review some basic constructions on $\cK$-spaces, referring the reader to \cite{Hollender-V_modules} for more details. Given a $\cK$-space $X$ and a $\cK^{\op}$-space $Y$, the bar construction $B(Y,\cK,X)$ is defined as the realization of the simplicial space with $p$-simplices
\[
B_p(Y,\cK,X)=\coprod_{\bld k_0,\dots,\bld k_p} Y(\bld k_0)\times \cK(\bld k_1,\bld k_0)\times\dots\times \cK(\bld k_p,\bld k_{p-1})\times X(\bld k_p)
\]  
where the coproduct is over all ($p+1$)-tuples of objects in $\cK$. The simplicial structure maps are of the usual bar construction type as detailed in \cite[Section~3]{Hollender-V_modules}. 

Let $F\colon \cK\to \cL$ be a continuous functor between small topological categories. Given a $\cK$-space $X$, the homotopy left Kan extension along $F$ is by definition the $\cL$-space $F_*^h(X)$ defined by 
\[
F_*^h(X)(\bld l)=B(\cL(F(-),\bld l),\cK,X)
\] 
for $\bld l$ an object in $\cL$ and $\cL(F(-),\bld l)$ the associated $\cK^{\op}$-space.
As a special case of this construction, the 
\emph{bar resolution} $\overline X$ of a $\cK$-space $X$ is the homotopy left Kan extension along the identity functor on $\cK$,
\[
\overline X(\bld k)=B(\cK(-,\bld k),\cK,X).
\]
There is canonical ``evaluation'' map of $\cK$-spaces $\epsilon\colon\overline X\to X$ which is a level-wise equivalence, cf.\ \cite[Proposition~3.1]{Hollender-V_modules}. Another special case is the 
homotopy colimit of a $\cK$-space $X$ defined by
\[
\hocolim_{\cK}X=B(*,\cK,X)
\] 
where $*$ denotes the terminal $\cK$-space. In particular, the homotopy colimit of $*$ is the classifying space $B\cK=B(*,\cK,*)$. It follows from the definitions that there is a natural homeomorphism 
$\colim_{\cK}\overline X\cong \hocolim_{\cK}X$. For convenience we often write $X_{h\cK}$ instead of  
$\hocolim_{\cK}X$.

\subsection{The $\boxtimes$-product on $\Top^{\cK}$}\label{subset:boxtimes-product}
We say that a topological category $\cK$ is \emph{permutative} if it
has a continuous symmetric strict monoidal product $\oplus$ with strict unit
$0$.  A permutative structure on $\cK$ induces a symmetric
monoidal convolution product on $\Top^{\cK}$: Given $\cK$-spaces $X$
and $Y$, we define $X\boxtimes Y$ to be the left Kan extension of the
product diagram
\[
X\times Y\colon \cK\times\cK \xr{X\times Y} \Top\times\Top \xr{\times}\Top
\] 
along $\oplus\colon \cK\times\cK\to \cK$. Thus, with the $\otimes$-notation from 
\cite{Hollender-V_modules}, we have
\[
(X\boxtimes Y)(\bld k)=\cK(-\oplus-,\bld k)\otimes_{\cK\times\cK}(X\times Y),
\]
also known as the coend of the evident $(\cK\times\cK)^{\op}\times (\cK\times\cK)$-diagram. By the universal property of the left Kan extension, a map of $\cK$-spaces $X\boxtimes Y\to Z$ amounts to a natural transformation of $\cK\times\cK$-diagrams
\[
X(\bld h)\times Y(\bld k)\to Z(\bld h\oplus \bld k).
\]
We write $U^{\cK}=\cK(0,-)$ for the $\cK$-space defined by the monoidal unit for $\cK$.
The following proposition is proved by arguments that are by now quite standard.

% (see e.g.\ \cite{Day-closed-categories} or the discussion in \cite{MMSS} of the analogous situation for based %topological index categories).

\begin{proposition}
The $\boxtimes$-product makes $\Top^{\cK}$ a closed symmetric monoidal category with 
$U^{\cK}$ as monoidal unit. \qed 
\end{proposition}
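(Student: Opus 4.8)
The plan is to verify the hypotheses of Day's convolution theorem (\cite{Day-closed-categories}), which produces a closed symmetric monoidal structure on the functor category $\Top^{\cK}$ from a symmetric monoidal enrichment base (here $(\Top,\times,\pt)$) together with a symmetric monoidal structure on the small index category $\cK$. Since $\cK$ is permutative, it is in particular a symmetric monoidal $\Top$-category, so the general machinery applies verbatim; the point of the proposition is mainly to record that the resulting unit is the represented functor $U^{\cK}=\cK(0,-)$ and that $\boxtimes$ agrees with the left Kan extension description given above.

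First I would recall that $\Top^{\cK}$ is itself closed symmetric monoidal under the objectwise product, and that it is cocomplete because $\Top$ is; this guarantees that the left Kan extension defining $X\boxtimes Y$ exists and is computed by the coend formula already written out. Next I would check associativity and symmetry of $\boxtimes$: both follow formally by manipulating iterated coends, using the associativity and symmetry isomorphisms of $\oplus$ on $\cK$ and of $\times$ on $\Top$, together with the Fubini theorem for coends. Concretely, one shows $(X\boxtimes Y)\boxtimes Z$ and $X\boxtimes(Y\boxtimes Z)$ are both naturally isomorphic to the left Kan extension of $X\times Y\times Z\colon \cK^{\times 3}\to\Top$ along the threefold sum $\cK^{\times 3}\to\cK$, so they agree; symmetry comes from the symmetry isomorphism $\bld h\oplus\bld k\iso \bld k\oplus\bld h$ in $\cK$. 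The coherence pentagon and hexagon then reduce to the corresponding coherences in $\cK$ and in $\Top$, which hold by permutativity of $\cK$ (strict associativity and unit, coherent symmetry).

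For the unit, I would use the universal property: a map $U^{\cK}\boxtimes X\to Z$ is a natural transformation $\cK(0,\bld h)\times X(\bld k)\to Z(\bld h\oplus\bld k)$, and by the Yoneda lemma such a transformation is the same as a map $X(\bld k)\to Z(\bld k)$ (set $\bld h=0$, using that $0$ is a strict unit so $0\oplus\bld k=\bld k$), i.e.\ a map $X\to Z$; hence $U^{\cK}\boxtimes(-)\iso \id$, and similarly on the other side. Closedness is then produced by the adjoint functor theorem: for fixed $Y$, the functor $(-)\boxtimes Y$ preserves all colimits (being a left adjoint, as it is the left Kan extension composed with a product, or directly because coends commute with colimits in each variable) and $\Top^{\cK}$ is locally presentable, so $(-)\boxtimes Y$ has a right adjoint, the internal hom.

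The main obstacle is bookkeeping rather than conceptual: one must be careful that all the coend manipulations are legitimate in the enriched (topological) rather than merely set-theoretic setting, and that the left Kan extension along $\oplus$ is the enriched (topologically weighted) Kan extension, so that continuity of all structure maps is automatic. Since $\cK$ is small, $\Top$ is cocomplete and cartesian closed, and $\oplus$ is a continuous functor, these enriched coends behave exactly as in the unenriched case, and the verification goes through as in \cite{Day-closed-categories} or the based-space analogue in \cite{MMSS}; this is why we are content to cite those sources.
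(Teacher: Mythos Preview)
Your proposal is correct and follows precisely the route the paper indicates: the paper gives no proof beyond the remark that the result is standard and refers to \cite{Day-closed-categories} and \cite{MMSS}, which is exactly the Day convolution argument you spell out. One small caveat: your appeal to local presentability of $\Top^{\cK}$ for the existence of the right adjoint is shaky, since compactly generated weak Hausdorff spaces are not known to be locally presentable; the safer (and standard) argument is to exhibit the internal hom directly as the enriched end $\Map(Y,Z)(\bld k)=\int_{\bld h}\Top\big(Y(\bld h),Z(\bld k\oplus\bld h)\big)$, which exists because $\Top$ is complete and cartesian closed.
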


We use the term \emph{$\cK$-space monoid} for a monoid in $\Top^{\cK}$ with respect to the $\boxtimes$-product.
The next lemma is a formal consequence of the universal properties of the free $\cK$-spaces $F_{\bld k}^{\cK}(K)=\cK(\bld k,-)\times K$ and the $\boxtimes$-product.
\begin{lemma}\label{lem:free-prod}
Given a pair of spaces $K$ and $L$, and a pair of objects $\bld h$ and $\bld k$ in $\cK$, there is a natural isomorphism
\[
F_{\bld h}^{\cK}(K)\boxtimes F_{\bld k}^{\cK}(L)\cong F_{\bld h\oplus \bld k}^{\cK}
(K\times L).
\eqno\qed
\]
\end{lemma}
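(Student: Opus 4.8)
The plan is to prove the isomorphism by a Yoneda argument, directly exploiting the two universal properties quoted above. Fix a $\cK$-space $Z$. By the universal property of the $\boxtimes$-product, a map $F_{\bld h}^{\cK}(K)\boxtimes F_{\bld k}^{\cK}(L)\to Z$ is precisely a natural transformation of $\cK\times\cK$-diagrams
\[
\bigl(\cK(\bld h,\bld a)\times K\bigr)\times\bigl(\cK(\bld k,\bld b)\times L\bigr)\to Z(\bld a\oplus \bld b)
\]
in the objects $\bld a,\bld b$ of $\cK$, while by the universal property of the free $\cK$-space a map $F_{\bld h\oplus\bld k}^{\cK}(K\times L)\to Z$ is precisely a continuous map $K\times L\to Z(\bld h\oplus\bld k)$. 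So the first step is to produce a bijection between these two sets of data that is natural in $Z$.

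Second, I would write the bijection down explicitly. Given $\phi\colon K\times L\to Z(\bld h\oplus\bld k)$, send it to the transformation whose $(\bld a,\bld b)$-component takes $(f,x,g,y)$, with $f\in\cK(\bld h,\bld a)$ and $g\in\cK(\bld k,\bld b)$, to $Z(f\oplus g)\bigl(\phi(x,y)\bigr)$; naturality in $(\bld a,\bld b)$ is immediate from functoriality of $Z$ together with continuity and functoriality of $\oplus$. Conversely, send a natural transformation $\psi$ to its component $\psi(\id_{\bld h},-,\id_{\bld k},-)\colon K\times L\to Z(\bld h\oplus\bld k)$. One roundtrip is trivial because $\oplus$ and $Z$ preserve identities, so $Z(\id_{\bld h}\oplus\id_{\bld k})=\id_{Z(\bld h\oplus\bld k)}$; the other uses naturality of $\psi$ along the pair of morphisms $f\colon\bld h\to\bld a$ and $g\colon\bld k\to\bld b$, which forces $\psi(f,x,g,y)=Z(f\oplus g)\psi(\id_{\bld h},x,\id_{\bld k},y)$, so $\psi$ is recovered from its value on the identities.

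Third, from the resulting natural isomorphism $\Top^{\cK}(F_{\bld h}^{\cK}(K)\boxtimes F_{\bld k}^{\cK}(L),-)\iso\Top^{\cK}(F_{\bld h\oplus\bld k}^{\cK}(K\times L),-)$ the Yoneda lemma yields the desired isomorphism of representing objects, and one checks it is natural in $K$, $L$, $\bld h$, $\bld k$ by tracking the above formulas. As an alternative I would instead compute directly: expanding the defining coend gives
\[
\bigl(F_{\bld h}^{\cK}(K)\boxtimes F_{\bld k}^{\cK}(L)\bigr)(\bld m)\iso\int^{(\bld a,\bld b)}\cK(\bld a\oplus\bld b,\bld m)\times\cK(\bld h,\bld a)\times K\times\cK(\bld k,\bld b)\times L,
\]
and applying the enriched co-Yoneda reduction first over $\bld a$ and then over $\bld b$ collapses this to $\cK(\bld h\oplus\bld k,\bld m)\times K\times L=F_{\bld h\oplus\bld k}^{\cK}(K\times L)(\bld m)$, naturally in $\bld m$.

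The main obstacle is really just bookkeeping, since the lemma is a formal consequence of the universal properties: one must keep the variances straight in the coend so that co-Yoneda applies in the correct direction, and remember that $\oplus\colon\cK\times\cK\to\cK$ being a (continuous) functor is all that is needed for the identity-preservation step. Continuity of every map in sight is automatic because $\cK$ is enriched in $\Top$, $\oplus$ is continuous, and $\Top$ is cartesian closed, so the adjunctions defining the free $\cK$-spaces and the $\boxtimes$-product interact as expected.
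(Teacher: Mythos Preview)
Your proposal is correct and matches the paper's approach: the paper gives no proof at all, simply noting before the statement that it ``is a formal consequence of the universal properties of the free $\cK$-spaces $F_{\bld k}^{\cK}(K)=\cK(\bld k,-)\times K$ and the $\boxtimes$-product'' and then placing a \qed\ box. Your Yoneda argument (and the alternative co-Yoneda computation) is precisely the formal verification the paper leaves implicit.
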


\subsection{Categories of linear isometries}\label{sec:isometries}
Let $\cV$ be the topological category with objects the standard inner product spaces $\mathbb R^n$ for $n\geq 0$ and morphisms the (not necessarily surjective) isometries. The canonical identification of $\mR^m\oplus \mR^n$ with $\mR^{m+n}$ makes $\cV$ a permutative topological category with symmetry isomorphisms $\chi_{m,n}\colon\mathbb R^m\oplus \mathbb R^n\to \mathbb R^n\oplus \mathbb R^m$ defined by $\chi_{m,n}(u,v)=(v,u)$ for $u\in \mathbb R^m$ and $v\in \mathbb R^n$. It is convenient to identify the objects of $\cV$ with the natural numbers $n\geq 0$. Thus, we shall write 
$\cV(m,n)$ instead of $\cV(\mR^m,\mR^n)$ and $m\oplus n$ instead of $\mR^m\oplus \mR^n$.
With this notation, the orthogonal group $O(n)$ is the group of endomorphisms $\cV(n,n)$ of~$n$. We write $\cO$ for the subcategory of isometric isomorphisms in $\cV$.

\begin{definition}
Let $\cW=\cO^{-1}\cO$ be Quillen's localization construction \cite{Grayson-higher} applied to $\cO$. 
In detail, we specify that the objects of $\cW$ be pairs of natural numbers $(n_1,n_2)$. The morphism space $\cW((m_1,m_2), (n_1,n_2))$ is non-empty if and only if there exists a natural number $m$ such that $m_1+m=n_1$ and $m_2+m=n_2$. In this case
\begin{equation}\label{eq:endomorphisms-of-W}
\cW((m_1,m_2), (n_1,n_2)) 
=\big(\cO(m_1\oplus m,n_1)\times \cO(m_2\oplus m,n_2)\big)/O(m)
\end{equation}
where the orthogonal group $O(m)$ acts diagonally from the right via the inclusions 
$O(m)\to O(m_i\oplus m)$ for $i=1,2$, extending an isometry of $\mR^{m}$ by the identity on 
$\mR^{m_i}$. The elements in this morphism space are written in the form $[m,\sigma_1,\sigma_2]$ for 
$\sigma_1$ in $O(n_1)$ and $\sigma_2$ in $O(n_2)$. With this notation composition is given by
\[
[n,\tau_1,\tau_2]\circ [m,\sigma_1,\sigma_2]=
[m\oplus n,\tau_1\circ(\sigma_1\oplus\id_{\mR^{n}}), \tau_2\circ(\sigma_2\oplus\id_{\mR^{n}})].
\]
\end{definition}

The next lemma gives a geometric interpretation of the morphisms spaces in $\cW$. 
Here we write $\mR^n\ominus V$ for the orthogonal complement of a linear subspace $V$ in $\mR^n$. 
\begin{lemma}
Suppose that $m_1+m=n_1$ and $m_2+m=n_2$. Then the map
\[
\cW((m_1,m_2), (n_1,n_2))\to \cV(m_1,n_1)\times 
\cV(m_2,n_2),\quad [m,\sigma_1,\sigma_1]\mapsto (\sigma_1|\mR^{m_1},\sigma_2|\mR^{m_2})
\]
is a fiber bundle whose fiber over an element $(\alpha_1,\alpha_2)$  can be identified with the space of isometric isomorphisms between $\mR^{n_1}\ominus \alpha_1(\mR^{m_1})$ and 
$\mR^{n_2}\ominus\alpha_2(\mR^{m_2})$.\qed
\end{lemma}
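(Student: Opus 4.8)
The plan is to exhibit this map as an associated fiber bundle of a principal bundle. For $i=1,2$ and $\alpha_i\in\cV(m_i,n_i)$ write $W_i=\mR^{n_i}\ominus\alpha_i(\mR^{m_i})$, a subspace of dimension $m$. First I would check that the map is well defined and continuous: the right $O(m)$-action used to form $\cW((m_1,m_2),(n_1,n_2))$ precomposes $\sigma_i$ with $\id_{\mR^{m_i}}\oplus g$ and so leaves the restrictions $\sigma_i|\mR^{m_i}$ unchanged, whence $[m,\sigma_1,\sigma_2]\mapsto(\sigma_1|\mR^{m_1},\sigma_2|\mR^{m_2})$ factors through a continuous map on the quotient; its image lies in $\cV(m_1,n_1)\times\cV(m_2,n_2)$ because each $\sigma_i|\mR^{m_i}$ is a (not necessarily surjective) isometry.

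Next I would observe that for each $i$ the restriction map $q_i\colon\cO(m_i\oplus m,n_i)\to\cV(m_i,n_i)$, $\sigma_i\mapsto\sigma_i|\mR^{m_i}$, is a principal $O(m)$-bundle for the action above. Indeed $\cO(m_i\oplus m,n_i)$ is the space of isometric isomorphisms $\mR^{m_i}\oplus\mR^m\to\mR^{n_i}$, and $q_i$ sends such an isomorphism to the orthonormal $m_i$-frame of $\mR^{n_i}$ consisting of the images of the standard basis of $\mR^{m_i}$; the fiber over $\alpha_i$ is the space of isometric isomorphisms $\mR^m\to W_i$, on which $O(m)$ acts freely and transitively, and local sections are supplied by the usual Gram--Schmidt construction on Stiefel manifolds. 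Hence $q_1\times q_2$ is a principal $(O(m)\times O(m))$-bundle over $\cV(m_1,n_1)\times\cV(m_2,n_2)$.

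By its very definition $\cW((m_1,m_2),(n_1,n_2))$ is the quotient of the total space of $q_1\times q_2$ by the diagonal subgroup $O(m)\subseteq O(m)\times O(m)$, and the map of the lemma is the induced map to the base. Since this diagonal subgroup is closed, the standard fact that $P/H\to P/G$ is a fiber bundle with fiber $G/H$ whenever $P\to P/G$ is a principal $G$-bundle and $H\le G$ is closed (the associated-bundle construction) shows that our map is a fiber bundle. Finally I would identify the fiber over $(\alpha_1,\alpha_2)$: a representative $(\sigma_1,\sigma_2)$ of a point of this fiber restricts on the first summand to $(\alpha_1,\alpha_2)$, hence restricts on the second summand to isometric isomorphisms $\psi_i\colon\mR^m\to W_i$, and $[m,\sigma_1,\sigma_2]\mapsto\psi_2\circ\psi_1^{-1}$ is a homeomorphism from the fiber onto the space of isometric isomorphisms $W_1\to W_2$; its inverse sends $\phi$ to the class of the pair obtained by extending $\alpha_1$ by an arbitrary isometric isomorphism $\psi\colon\mR^m\to W_1$ and $\alpha_2$ by $\phi\circ\psi$, the choices of $\psi$ and of the representative being absorbed by the diagonal $O(m)$-action.

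I expect the main obstacle to be the local-triviality input: verifying carefully that each $q_i$ is genuinely a principal $O(m)$-bundle (local sections via Gram--Schmidt) and invoking the quotient lemma $P/H\to P/G$ within the category of compactly generated weak Hausdorff spaces. The remaining steps --- well-definedness, the product of principal bundles, and the explicit identification of the fiber --- are routine bookkeeping.
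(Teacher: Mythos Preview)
Your argument is correct. The paper itself gives no proof of this lemma: it is stated with a terminal \qed\ and treated as a routine geometric observation. Your approach---recognizing each restriction map $q_i\colon \cO(m_i\oplus m,n_i)\to\cV(m_i,n_i)$ as the classical principal $O(m)$-bundle $O(n_i)\to V_{m_i}(\mR^{n_i})$, forming the product principal $(O(m)\times O(m))$-bundle, and then invoking the associated-bundle lemma for the quotient by the diagonal $O(m)$---is exactly the standard way to justify such a statement and is presumably what the authors had in mind. The fiber identification via $[\psi_1,\psi_2]\mapsto\psi_2\circ\psi_1^{-1}$ is clean and correct. Nothing more is needed.
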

%\begin{proof}
%The map in question can be identified with the map of quotient spaces induced by the diagonal inclusion 
%$O(m)\to O(m)\times O(m)$ and is therefore a fiber bundle.
%Furthermore, an element  
%$[m,\sigma_1,\sigma_2]$ in the fiber over $(\alpha_1,\alpha_2)$ gives rise to the isometric isomorphisms 
%\[
%\mR^{n_1}-\alpha_1(\mR^{m_1})\xl{\sigma_1|\mR^m} \mR^m\xr{\sigma_2|\mR^m} \mR^{n_2}-
%\alpha_2(\mR^{m_2})
%\]
%and it is easy to check that this is a bijective correspondence.
%\end{proof}

The permutative structure of $\cW$ is defined on objects by 
\[
(m_1,m_2)\oplus (n_1,n_2)=(m_1\oplus n_1,m_2\oplus n_2).
\]
For a pair of morphisms 
\[
[m,\sigma_1,\sigma_2]\colon (m_1,m_2)\to (m'_1,m'_2),\quad 
[n,\tau_1,\tau_2]\colon (n_1,n_2)\to (n'_1,n'_2),
\]
we define $[m,\sigma_1,\sigma_2]\oplus[n,\tau_1,\tau_2]$ to be the morphism
\[
[m\oplus n,(\sigma_1\oplus\tau_1)\circ (\id_{\mR^{m_1}}\oplus \chi_{n_1,m}\oplus \id_{\mathbb R^{n}}),
(\sigma_2\oplus\tau_2)\circ (\id_{\mR^{m_2}}\oplus \chi_{n_2,m}\oplus \id_{\mathbb R^{n}})].
\]
The object $(0,0)$ is a strict unit for this product and we have the coordinate-wise symmetry isomorphism on 
$\cW$ defined by
\[
[\chi_{m_1,n_1},\chi_{m_2,n_2}]\colon (m_1,m_2)\oplus(n_1,n_2)\to(n_1,n_2)\oplus(m_1,m_2).
\]
We define the degree of an object $(n_1,n_2)$ to be the integer $n_2-n_1$ and 
write $\cW_{\{d\}}$ for the full subcategory of $\cW$ with objects of degree $d$. Thus, $\cW$ decomposes as a coproduct of the categories $\cW_{\{d\}}$ for $d\in \mathbb Z$ and the permutative structure restricts to functors $\oplus\colon\cW_{\{d\}}\times \cW_{\{e\}}\to \cW_{\{d+e\}}$.

%\begin{remark}
%From the point of view of \cite{Grayson-higher} we have defined the Quillen localization $\cW$ using the %diagonal right action of the permutative category $\cO$ on the product $\cO\times \cO$ as opposed to the %diagonal left action. This will be relevant in Section~\ref{sec:W-spaces-SpO} when setting up the %adjunction relating $\cW$-spaces to orthogonal spectra since we write the suspension coordinates on the %right in the definition of the spectrum structure maps.  
%\end{remark}

\subsection{The Grothendieck construction}\label{subsec:Grothendieck-construction}
It will be convenient to have a description of the $d$th component $\cW_{\{d\}}$ in terms of a certain Grothendieck construction (in the sense of Thomason \cite{Thomason-homotopy-colimt}). Let us write 
$\TopCat$ for the (topological) category of small topological categories. Consider the functor $O\colon \cV\to \TopCat$ that takes $\mR^n$ to $O(n)$, thought of as a topological category with a single object, and that takes an isometry $\alpha\colon \mR^m\to \mR^n$ to the functor (that is, group homomorphism) $O_{\alpha}\colon O(m)\to O(n)$ defined as follows: an element 
$a\in O(m)$ is mapped to the element $O_{\alpha}(a)\in O(n)$ determined by the commutativity of the diagram
\[
\xymatrix@-.5pc{
\mR^n \ar[r]^{O_{\alpha}(a)} & \mR^n\\
\mR^m\oplus (\mR^n\ominus\alpha(\mR^m)) \ar[r]^{a\oplus\id}\ar[u]^{\{\alpha,\mathrm{inc}\}}& \mR^m
\oplus(\mR^n\ominus \alpha(\mR^m))\ar[u]_{\{\alpha,\mathrm{inc}\}}.
}
\]
Here $\mR^n\ominus\alpha(\mR^m)$ again denotes the orthogonal complement of $\alpha(\mR^m)$ and 
$\mathrm{inc}$ is the inclusion. 
%In order to see that this defines a continuous functor, let $l=n-m$, so that restricting to the first summand %in $\mR^{m}\oplus\mR^l$ defines a homeomorphism 
%\[
%\cO(m\oplus l,n)/O(l)\xr{\cong} \cV(m,n),\quad \sigma\mapsto \sigma|\mR^m. 
%\]  
%For $\alpha=\sigma|\mR^m$ we then have $O_{\alpha}(a)=\sigma(a\oplus\id_{\mR^l}) \sigma^{-1}$ which %is clearly continuous as a function of $a$ and $\sigma$. 
For $d\geq 0$ we write $O(d\oplus-)\colon \cV\to \TopCat$ for the functor obtained from $O$ by precomposition with the endofunctor $d\oplus -$ on $\cV$.
The functor $O(d\oplus -)$ gives rise to the topological category $\VintOd$, the Grothendieck construction, with objects the natural numbers $n\geq 0$ and morphisms 
$(a,\alpha)\colon m\to n$ given by a linear isometry $\alpha\colon \mR^m\to \mR^n$ in $\cV$ and an element $a\in O(d\oplus n)$. Composition of morphisms is defined by
\[
(b,\beta)\circ(a,\alpha)=(b\cdot O_{d\oplus\beta}(a),\beta\alpha).
\] 
For $d=0$ we use the notation $\VintO$ instead of $\VintOd$. The direct sum on objects and morphisms inherited from $\cV$ makes $\VintO$ a permutative category with symmetry isomorphism $(\id_{\mR^{n+m}},\chi_{m,n})\colon m\oplus n\to
n\oplus m$ induced by the symmetry isomorphism $\chi_{m,n}$ from $\cV$. 

\begin{lemma}\label{lem:VintO-W0-equivalence}
The categories  $\VintO$ and $\cW_{\{0\}}$ are isomorphic as permutative topological categories, and for $d\geq 1$ there are canonical isomorphisms of topological categories $\VintOd\cong \cW_{\{d\}}\cong\cW_{\{-d\}}$.
\end{lemma}
\begin{proof}
The isomorphism $\VintOd\to \cW_{\{d\}}$ takes an object $n$ to $(n,d\oplus n)$. Using the canonical identification of $\cV(m,n)$ with $\cO(m\oplus l,n)/O(l)$, for $m\oplus l=n$, the effect on morphisms is defined by the homeomorphism
\[
O(d\oplus n)\times \cO(m\oplus l,n)/O(l)\to \big(\cO(m\oplus l,n)\times\cO(d\oplus m\oplus l,d \oplus n)\big)/O(l)
\]
taking $(a,[l,\alpha])$ to $[l,\alpha, a\circ(d\oplus \alpha)]$. The inverse homeomorphism takes $[l,\alpha_1,\alpha_2]$ to $(\alpha_2\circ(d\oplus \alpha_1^{-1}), [l,\alpha_1])$. 
Finally, the obvious ``inversion'' automorphism of $\cW$, taking $(n_1,n_2)$ to $(n_2,n_1)$, restricts to an isomorphism $\cW_{\{d\}}\cong\cW_{\{-d\}}$.
\end{proof}
Introducing the Grothendieck construction $\VintOd$ in this context allows us to invoke Thomason's homotopy colimit theorem in the next lemma. Let $\cN$ be the ordered set of natural numbers and define a functor $\cN\to \cV$ by mapping $n$ to $\mR^n$ and $n\to n+1$ to the inclusion $\mR^n\oplus\mR^0\to \mR^n\oplus\mR$.

\begin{proposition}\label{prop:Grothendieck-hocolim-equivalence}
For a $\cW$-space $X$ and $d\geq 0$, there are natural weak homotopy equivalences
\[
\begin{aligned}
&\hocolim_{n\in\cN}X(n,d\oplus n)_{hO(d\oplus n)}\xr{\sim}\hocolim_{n\in \cV}X(n,d\oplus n)_{hO(d\oplus n)}
\xr{\sim}X_{h\cW_{\{d\}}},\\
&\hocolim_{n\in\cN}X(d\oplus n,n)_{hO(d\oplus n)}\xr{\sim}\hocolim_{n\in \cV}X(d\oplus n,n)_{hO(d\oplus n)}
\xr{\sim}X_{h\cW_{\{-d\}}},
\end{aligned}
\]
where $X_{h\cW_{\{-\}}}$ is the homotopy colimit of $X$ restricted to the stated subcategory, and $(-)_{hO(d\oplus n)}$ denotes the homotopy orbits of the left $O(d\oplus n)$-action in the first or second variable.
\end{proposition}
\begin{proof}
The first maps in each composition are induced by the functor $\cN\to \cV$ and are weak homotopy equivalences by  \cite[Lemma~7.3]{Lind-diagram}. The second maps are defined in analogy with the map considered by Thomason 
\cite[Lemma~1.2.1]{Thomason-homotopy-colimt}, using the isomorphisms from Lemma~\ref{lem:VintO-W0-equivalence}. They are weak equivalences by a slight generalization of the argument used in the proof of \cite[Theorem~1.2]
{Thomason-homotopy-colimt} (see also \cite[Proposition~6.2]{Hollender-V_modules} and the dual argument in \cite[Theorem~2.3]{Schlichtkrull-cyclotomic}).
\end{proof}

Applied to the terminal $\cW$-space, the above proposition determines the homotopy type of 
$B\cW$, 
\[
\hocolim_{n\in \cN}BO(n)\xr{\sim} \hocolim_{n\in \cN}BO(d\oplus n)\xr{\sim}B\cW_{\{d\}}\cong B\cW_{\{-d\}}.
\]
It follows that $B\cW$ is a model of $BO\times \mZ$, the classifying space for $KO$-theory, as was first observed by Thomason
%~\cite[Appendix, Cor. 1]{Thomason_thesis} 
based on the analysis in \cite{Grayson-higher}. 

\section{The homotopy theory of $\cW$-spaces}\label{sec:W-homotopy-theory}
In this section we set up the basic homotopy theory of $\cW$-spaces. This is analogous to the homotopy theory of $\cJ$-spaces considered in \cite{Sagave-S_diagram}. 
%The corresponding homotopy theory of $\cV$-spaces has been worked out by Lind~\cite{Lind-diagram}. 
Much of this material is quite standard and we only cover the minimum needed for the rest of the paper.

\subsection{The $\cW$-model structure}
We first consider the level model structure on $\Top^{\cW}$ and say that a map of $\cW$-spaces $X\to Y$ is a level equivalence (respectively a level fibration) if $X(n_1,n_2)\to Y(n_1,n_2)$ is a weak homotopy equivalence (respectively a fibration) for all objects $(n_1,n_2)$ in $\cW$. We say that $X\to Y$ is a cofibration if it has the left lifting property with respect to maps of $\cW$-spaces that are both level equivalences and level fibrations. Consider for each object $(d_1,d_2)$ the pair of adjoint functors
\[
F_{(d_1,d_2)}^{\cW}\colon \Top \rightleftarrows \Top^{\cW}:\! \Ev_{(d_1,d_2)}
\]
where $ \Ev_{(d_1,d_2)}(X)= X(d_1,d_2)$ and  $F_{(d_1,d_2)}^{\cW}(K)=\cW((d_1,d_2),-)\times K.$
Let $I$ be the standard set of generating cofibrations for $\Top$ of the form $S^{n-1}\to D^n$ for 
$n\geq 0$, and let $J$ be the standard set of generating acyclic cofibrations of the form 
$D^n\to D^n\times I$ for $n\geq 0$.
% (see e.g. \cite[Section~2.4]{Hovey-model} for details).  
We let $FI$ (respectively $FJ$) denote the set of maps in $\Top^{\cW}$ of the form $F_{(d_1,d_2)}^{\cW}(i)$ for $i$ an element in $I$ (respectively $J$). The following result is standard (see for instance the analogous results for discrete index categories in \cite[Theorem~11.6.1]{Hirschhorn-model} and based topological index categories in \cite[Theorem~6.5]{MMSS}).    

\begin{proposition}
The level equivalences, level fibrations, and cofibrations specify a cofibrantly generated model structure on $\Top^{\cW}$ with generating cofibrations $FI$ and generating acyclic cofibrations $FJ$. \qed
\end{proposition}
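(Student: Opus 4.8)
The plan is to verify that $\Top^{\cW}$ together with the proposed classes of weak equivalences, fibrations, and cofibrations satisfies the hypotheses of the standard recognition theorem for cofibrantly generated model categories (Kan's recognition theorem, as formulated in \cite[Theorem~11.3.1]{Hirschhorn-model}). Since $\Top^{\cW}$ is a functor category into $\Top$, it is complete and cocomplete, with limits and colimits formed objectwise. The level equivalences evidently satisfy the two-out-of-three property and are closed under retracts, since these properties hold objectwise in $\Top$. So the remaining work is: (i) identify the two sets $FI$ and $FJ$ as generating (acyclic) cofibrations; (ii) check the smallness conditions needed to run the small object argument; and (iii) verify the key compatibility between $FJ$-cell complexes and the level equivalences.

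First I would record the adjunction identities coming from the free--evaluation adjunctions $(F^{\cW}_{(d_1,d_2)}, \Ev_{(d_1,d_2)})$: a map $f$ in $\Top^{\cW}$ has the right lifting property with respect to $F^{\cW}_{(d_1,d_2)}(i)$ if and only if $\Ev_{(d_1,d_2)}(f)$ has the right lifting property with respect to $i$ in $\Top$. Taking this over all objects $(d_1,d_2)$ and all $i\in I$ (respectively $i\in J$), one sees that $FI$-injective maps are exactly the level acyclic fibrations and $FJ$-injective maps are exactly the level fibrations. In particular, by the definition of cofibration as having the left lifting property against level acyclic fibrations, $FI$-cofibrations coincide with the cofibrations defined in the statement; this will also give the characterization of acyclic cofibrations once (iii) is in place.

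Next I would address smallness. The domains of the maps in $FI$ and $FJ$ have the form $\cW((d_1,d_2),-)\times S^{n-1}$ or $\cW((d_1,d_2),-)\times D^n$; since each morphism space $\cW((m_1,m_2),(n_1,n_2))$ is a compactly generated weak Hausdorff space, these domains are small relative to the whole category (or at least relative to the relevant cell complexes) by the usual compactness arguments in $\Top$, exactly as in the based topological setting of \cite[Section~6]{MMSS}. This lets the small object argument run and produces the two functorial factorizations: every map factors as an $FI$-cell complex followed by a level acyclic fibration, and as an $FJ$-cell complex followed by a level fibration.

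The main obstacle, and the heart of the proof, is step (iii): showing that every relative $FJ$-cell complex is both a cofibration and a level equivalence. That it is a cofibration follows because $FJ \subseteq FI\text{-cof}$ (each $F^{\cW}_{(d_1,d_2)}(j)$ for $j\in J$ is a level acyclic cofibration, hence has the left lifting property against level acyclic fibrations), and $FI$-cof is closed under pushouts, transfinite composition, and retracts. That it is a level equivalence is where one uses the structure of $\Top$: the functor $\Ev_{(d_1,d_2)}$ is a left adjoint, so it preserves colimits, and it sends each generator $F^{\cW}_{(d_1',d_2')}(j)$ to a coproduct of copies of $j$ indexed by the space $\cW((d_1',d_2'),(d_1,d_2))$, which is an acyclic cofibration in $\Top$; since acyclic cofibrations in $\Top$ are closed under coproducts, pushouts, and transfinite composition, the image under $\Ev_{(d_1,d_2)}$ of an $FJ$-cell complex is an acyclic cofibration in $\Top$, hence in particular a weak homotopy equivalence. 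This holds for every object $(d_1,d_2)$, so the $FJ$-cell complex is a level equivalence. With (i)--(iii) established, Kan's recognition theorem yields the cofibrantly generated model structure with the asserted generating sets, and the lifting/retract characterization of acyclic cofibrations follows formally. All of these arguments are entirely parallel to the discrete-index case in \cite[Theorem~11.6.1]{Hirschhorn-model} and the based topological case in \cite[Theorem~6.5]{MMSS}, with $\cW$ in place of the relevant index category; the only point requiring any care specific to $\cW$ is confirming that its hom-spaces are well enough behaved (compactly generated weak Hausdorff, which holds by construction as quotients of products of orthogonal group actions) for the small object argument.
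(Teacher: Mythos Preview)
Your proposal is correct and follows exactly the standard approach the paper is gesturing at with its references to \cite[Theorem~11.6.1]{Hirschhorn-model} and \cite[Theorem~6.5]{MMSS}; the paper itself gives no proof beyond those citations, so you have supplied the argument the paper omits.

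One small imprecision worth fixing: in step~(iii) you write that $\Ev_{(d_1,d_2)}$ sends $F^{\cW}_{(d'_1,d'_2)}(j)$ to ``a coproduct of copies of $j$ indexed by the space $\cW((d'_1,d'_2),(d_1,d_2))$''. Since $\cW$ is topologically enriched, this is actually the \emph{product} map $\cW((d'_1,d'_2),(d_1,d_2))\times j$, not a coproduct over a discrete index set. Correspondingly, such a map need not be a Quillen cofibration in $\Top$ unless the hom-space is cofibrant; the argument that goes through cleanly is that it is an acyclic Hurewicz cofibration (an $h$-cofibration and a homotopy equivalence), and acyclic $h$-cofibrations in $\Top$ are closed under pushouts and transfinite composition. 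This is exactly the device used in \cite[Lemma~5.5 and Theorem~6.5]{MMSS}. With that clarification, your argument is complete.
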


We shall be more interested in a certain localization of the level model structure and say that a map of 
$\cW$-spaces $X\to Y$ is a
\begin{itemize}
\item
$\cW$-equivalence if the induced map of homotopy colimits $X_{h\cW}\to Y_{h\cW}$ is a weak homotopy equivalence,
\item
$\cW$-fibration if it is a level fibration and the diagram 
\begin{equation}\label{eq:hty-cart-for-W-fib}
\xymatrix@-1pc{
X(m_1,m_2)\ar[r] \ar[d]&X(n_1,n_2)\ar[d]\\
Y(m_1,m_2) \ar[r] &Y(n_1,n_2)
}
\end{equation}
is homotopy cartesian for any morphism $(m_1,m_2)\to (n_1,n_2)$ in $\cW$,
\item
$\cW$-cofibration if it is a cofibration in the level model structure. 
\end{itemize}    

\begin{proposition}\label{prop:W-model-str}
The $\cW$-equivalences, $\cW$-fibrations, and $\cW$-cofibrations specify a cofibrantly generated proper model structure on $\Top^{\cW}$.
\end{proposition}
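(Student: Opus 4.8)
The plan is to obtain the $\cW$-model structure as a left Bousfield localization of the level model structure established in the preceding proposition, following the pattern used for $\cJ$-spaces in \cite{Sagave-S_diagram} and for $\cV$-spaces by Lind \cite{Lind-diagram}. First I would identify an explicit set $S$ of maps whose $S$-local equivalences are precisely the $\cW$-equivalences. The natural candidates are the maps
\[
F^{\cW}_{(n_1,n_2)}(\ast) \to F^{\cW}_{(m_1,m_2)}(\ast)
\]
induced by the morphisms $(m_1,m_2)\to(n_1,n_2)$ in $\cW$ (equivalently, by a generating set of such morphisms, using that $\cW$ is generated under $\oplus$ by a small amount of data), possibly after a cofibrant replacement so that the maps lie between cofibrant objects. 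Since $\Top^{\cW}$ with the level model structure is cofibrantly generated, proper, and simplicial (or rather topological, hence amenable to the same localization machinery), the existence of the left Bousfield localization $L_S\Top^{\cW}$ is immediate from \cite[Theorem~4.1.1]{Hirschhorn-model}; this produces a cofibrantly generated model structure whose cofibrations are the level cofibrations (our $\cW$-cofibrations) and whose weak equivalences are the $S$-local equivalences.

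The substantive step is to verify that the $S$-local equivalences coincide with the $\cW$-equivalences and that the $S$-local fibrations between $\cW$-fibrant objects coincide with the level fibrations $X\to Y$ for which all squares \eqref{eq:hty-cart-for-W-fib} are homotopy cartesian. For the first identification one uses that mapping out of $F^{\cW}_{(n_1,n_2)}(\ast)$ into a level-fibrant $Z$ computes $Z(n_1,n_2)$, so that $Z$ is $S$-local exactly when every structure map $Z(m_1,m_2)\to Z(n_1,n_2)$ is a weak equivalence; one then checks, via the standard analysis of homotopy colimits over $\cW$ (Bousfield--Kan spectral sequence, or the cofinality/contractibility of the relevant bar constructions), that $X\to Y$ is an $S$-local equivalence iff $X_{h\cW}\to Y_{h\cW}$ is a weak equivalence. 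Here it is useful that $B\cW \simeq BO\times\mZ$ has the homotopy type analyzed in Proposition~\ref{prop:Grothendieck-hocolim-equivalence}, though the argument is formal and does not really need the explicit homotopy type. For the fibrancy description, one observes that an $S$-local fibrant object is a level-fibrant $\cW$-space on which all morphisms act as equivalences, hence all squares \eqref{eq:hty-cart-for-W-fib} are (trivially) homotopy cartesian; conversely a level fibration with all such squares homotopy cartesian has homotopy fibers computing the homotopy fibers of the map on homotopy colimits, which gives the lifting property against $S$-local trivial cofibrations. This is the step I expect to be the main obstacle, since it requires care with the interaction of homotopy colimits and homotopy pullbacks over the (non-discrete, non-filtered) category $\cW$; the key technical input is that $\cW$ decomposes over $\mZ$ into the pieces $\cW_{\{d\}}$, each of which is, up to the translation by $\oplus$, governed by $\cW_{\{0\}}\cong\VintO$, so one is reduced to a single well-understood homotopy-colimit situation.

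Finally, properness: right properness follows because $\cW$-fibrations are in particular level fibrations, level pullbacks preserve level equivalences, and one checks that the homotopy-cartesian condition together with the standard fact that homotopy colimits preserve homotopy pullbacks of such diagrams forces the pulled-back map to again be a $\cW$-equivalence. Left properness follows from left properness of the level model structure on $\Top^{\cW}$ (which holds because $\Top$ is left proper and cofibrations are level cofibrations) together with the general fact that a left Bousfield localization of a left proper model category is again left proper \cite[Theorem~4.1.1]{Hirschhorn-model}. I would assemble these observations into the statement, remarking that the $\cW$-equivalences between $\cW$-fibrant objects are exactly the level equivalences, which both pins down the model structure uniquely and will be convenient later when comparing $\Top^{\cW}$ with orthogonal spectra.
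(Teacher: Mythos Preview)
Your Bousfield localization route is a legitimate alternative, but it is not what the paper does. The paper invokes the direct construction of \cite[Proposition~6.16]{Sagave-S_diagram}, which builds the model structure by hand from explicit generating acyclic cofibrations; the only change from the discrete-index case is that the topological enrichment of $\cW$ forces one to replace \cite[Lemma~6.12]{Sagave-S_diagram} by Lemma~\ref{lem:fib-of-W-spaces-and-hty-car-sq}. That lemma---saying that the levelwise homotopy-cartesian condition on $X\to Y$ forces the square over $X_{h\cW}\to Y_{h\cW}$ to be homotopy cartesian---is exactly the ``interaction of homotopy colimits and homotopy pullbacks over $\cW$'' you flag as the main obstacle, and its proof is precisely the reduction via $\cW_{\{0\}}\cong\VintO$ that you sketch. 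So you have correctly isolated the one nontrivial input; both approaches need it.

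What each buys: the paper's direct construction yields the explicit description of $\cW$-fibrations immediately, since it is encoded in the generating acyclic cofibrations, and properness follows by the same substitution into \cite[\S 11]{Sagave-S_diagram}. In your approach these identifications must be extracted after the fact, and each deferred step collapses to Lemma~\ref{lem:fib-of-W-spaces-and-hty-car-sq}: your claim that $S$-local equivalences coincide with $\cW$-equivalences needs it for the implication ``$\cW$-equivalence $\Rightarrow$ $S$-local equivalence'' (constant test objects give only the other direction, since $S$-local objects carry nontrivial endomorphism actions and are not literally constant); your characterization of fibrations needs it to pass from the Bousfield-localization description to the small-square condition; and your right-properness sketch is again this lemma in disguise. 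Once Lemma~\ref{lem:fib-of-W-spaces-and-hty-car-sq} is in hand, the paper's route is simply shorter.
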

We shall refer to this as the \emph{$\cW$-model structure} on $\Top^{\cW}$. The $\cW$-cofibrations will usually be referred to simply as cofibrations. 
The proof of the proposition is based on the following lemma. It is a variant of~\cite[Proposition 4.4]{Rezk_SS-simplicial} that applies to continuous functors $\cW \to \Top$ as opposed to functors defined only on discrete indexing categories.
\begin{lemma}\label{lem:fib-of-W-spaces-and-hty-car-sq}
Let $X \to Y$ be a map in $\Top^{\cW}$ such that the left hand square in
\[\xymatrix@-1pc{X(m_1,m_2) \ar[r] \ar[d]_{[m,\sigma_1,\sigma_2]_*} & Y(m_1,m_2)\ar[d]^{[m,\sigma_1,\sigma_2]_*} & & & X(m_1,m_2) \ar[r] \ar[d] & Y(m_1,m_2)\ar[d] \\ X(n_1,n_2) \ar[r] & Y(n_1,n_2) & & & X_{h\cW} \ar[r] & Y_{h\cW} }\] is homotopy cartesian for every morphism $[m,\sigma_1,\sigma_2]\colon (m_1,m_2) \to (n_1,n_2)$ in $\cW$. Then the right hand square is homotopy cartesian for every object $(m_1,m_2)$ in $\cW$.
\end{lemma}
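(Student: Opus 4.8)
The plan is to deduce the statement from a comparison of homotopy colimits over $\cW$ with homotopy colimits over the individual components $\cW_{\{d\}}$, together with a standard Bousfield--Kan style argument identifying the homotopy fiber of $X_{h\cW}\to Y_{h\cW}$ in terms of the level-wise homotopy fibers. Since $\cW=\coprod_{d\in\mZ}\cW_{\{d\}}$, a $\cW$-space $X$ is just a family $(X|\cW_{\{d\}})_{d}$, and $X_{h\cW}=\coprod_{d} (X|\cW_{\{d\}})_{h\cW_{\{d\}}}$; likewise a fixed object $(m_1,m_2)$ lies in a single component $\cW_{\{d\}}$ and the right hand square only sees that component. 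So I would reduce to the case where $\cW$ is replaced by a single path-connected category $\cW_{\{d\}}$ — or more precisely to working one component at a time — and thereby assume that the indexing category has a single ``weak equivalence class'' of objects in the sense that $B\cW_{\{d\}}$ is connected.

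Next I would invoke the fact that $\cW_{\{d\}}$ is equivalent (after applying $B$) to $\VintO$, and more to the point, that every morphism in $\cW_{\{d\}}$ is invertible up to the relevant homotopy — or rather, I would use the following cleaner route. Form the homotopy fiber $F(n_1,n_2)$ of $X(n_1,n_2)\to Y(n_1,n_2)$ over a chosen basepoint; the hypothesis that every square
\[
\xymatrix@-1pc{X(m_1,m_2) \ar[r] \ar[d] & Y(m_1,m_2)\ar[d] \\ X(n_1,n_2) \ar[r] & Y(n_1,n_2)}
\]
is homotopy cartesian says exactly that the induced map $F(m_1,m_2)\to F(n_1,n_2)$ is a weak homotopy equivalence for every morphism in $\cW$, i.e. $F$ is a ``homotopy constant'' $\cW$-diagram. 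There is then a standard homotopy colimit fibration lemma (the analogue of \cite[Proposition~4.4]{Rezk_SS-simplicial} and the classical Bousfield--Kan result, in the continuous setting): given a map $X\to Y$ of $\cW$-diagrams whose level-wise homotopy fibers form a homotopy constant diagram $F$, the square
\[
\xymatrix@-1pc{F(m_1,m_2) \ar[r] \ar[d] & X_{h\cW}\ar[d] \\ \ast \ar[r] & Y_{h\cW}}
\]
— equivalently the square in the statement — is homotopy cartesian. This is precisely the content one needs: that $X_{h\cW}\to Y_{h\cW}$ has homotopy fiber $F(m_1,m_2)$ over the image of the basepoint.

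Concretely I would realize this by the bar construction. Replace $X\to Y$ up to level equivalence by a level fibration with fiber $F$ (using the level model structure), so that $F(\bld k)$ is the actual fiber and each $F(\bld m)\to F(\bld n)$ is a weak equivalence. Then $X_{h\cW}=B(\ast,\cW,X)\to B(\ast,\cW,Y)$ is a map of realizations of simplicial spaces, and in each simplicial degree $p$ the map
\[
\coprod X(\bld k_p)\times \cW(\bld k_1,\bld k_0)\times\cdots \longrightarrow \coprod Y(\bld k_p)\times \cW(\bld k_1,\bld k_0)\times\cdots
\]
is again a fibration with fiber $\coprod F(\bld k_p)\times \cW(\bld k_1,\bld k_0)\times\cdots$, which is the $p$-simplices of $B(\ast,\cW,F)$. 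Since $F$ is homotopy constant and $B\cW_{\{d\}}$ is connected, $B(\ast,\cW,F)\simeq \coprod_d F(\text{pt in degree }d)$, and in particular the inclusion of the fiber $F(m_1,m_2)$ into $F_{h\cW}$ over the degree-$d$ component is an equivalence. A $\pi_*$-Kan condition / proper realization argument (everything is a CW-up-to-homotopy situation, and realizations of proper simplicial spaces commute with the relevant homotopy pullbacks) then upgrades the degree-wise fiber sequences to a fiber sequence after realization, giving that the right hand square is homotopy cartesian.

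The main obstacle is the realization step: passing from a level-wise (simplicial-degree-wise) family of homotopy cartesian squares to a homotopy cartesian square after geometric realization is not formal and requires either a $\pi_*$-Kan / Reedy fibrancy hypothesis or an argument via the skeletal filtration. I expect to handle this exactly as in \cite{Rezk_SS-simplicial}: arrange $X\to Y$ to be a level fibration, so the simplicial spaces involved are proper (degreewise cofibrant inclusions of latching objects, using that $\cW(\bld k_1,\bld k_0)\times\cdots$ are CW and the free $\cW$-space machinery of Lemma~\ref{lem:free-prod}), and then invoke the theorem that realization of a map of proper simplicial spaces which is degreewise a quasifibration with constant fiber is again a quasifibration. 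The continuity of the functors (as opposed to the discrete case in \cite{Rezk_SS-simplicial}) is not a real difficulty here since $\cW$ has CW morphism spaces and we only ever take products and realizations, which behave well in $\Top$; this is the one place where the ``continuous vs. discrete'' distinction flagged before the lemma has to be checked rather than quoted.
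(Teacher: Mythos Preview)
Your outline is sound and, carried out carefully, would prove the lemma; it is however a genuinely different route from the paper's. You reduce to a single component $\cW_{\{d\}}$ via the coproduct decomposition and then attack the bar construction over that component directly with a Bousfield--Kan/realization-of-quasifibrations argument. The paper instead shifts to degree~$0$ by a homotopy cofinal functor, invokes the identification $\cW_{\{0\}}\cong\VintO$ from Lemma~\ref{lem:VintO-W0-equivalence}, and then uses Proposition~\ref{prop:Grothendieck-hocolim-equivalence} to factor the relevant square as
\[
\xymatrix@-1pc{
X(m,m)\ar[r]\ar[d]& X(m,m)_{hO(m)}\ar[r]\ar[d]&\hocolim_{m\in\cN}X(m,m)_{hO(m)}\ar[d]\\
Y(m,m)\ar[r]& Y(m,m)_{hO(m)}\ar[r]&\hocolim_{m\in\cN}Y(m,m)_{hO(m)}.
}
\]
The left square is then homotopy cartesian by May's theorem on classifying spaces of topological groups \cite[Theorem~7.6]{May-classifying}, and the right square lives over the \emph{discrete} category $\cN$, so \cite[Proposition~4.4]{Rezk_SS-simplicial} applies verbatim. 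The payoff of this detour is that it quarantines the topological enrichment entirely in the group-action step, where May's theorem is available off the shelf, and leaves only a discrete indexing category for the remainder. Your direct approach must instead confront the enrichment head-on in the realization step you flag as ``the main obstacle''; your closing remark that continuity ``is not a real difficulty'' is precisely where the two strategies part ways, since the paper's whole organization is designed to avoid having to establish a continuous-indexing analogue of \cite[Proposition~4.4]{Rezk_SS-simplicial} from scratch.
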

\begin{proof}
Suppose first that $(m_1,m_2)$ has the form $(m,d\oplus m)$ for $d\geq 0$. By Proposition~\ref{prop:Grothendieck-hocolim-equivalence} it suffices to show that the outer square in the commutative diagram 
\[
\xymatrix@-1pc{ 
X(m,d\oplus m) \ar[r] \ar[d] & X(m,d\oplus m)_{hO(d\oplus m)} \ar[r]\ar[d] &\displaystyle \hocolim_{n\in \cN}X(n,d\oplus n)_{hO(d\oplus n)}\ar[d] \\ 
Y(m,d\oplus m) \ar[r] & Y(m,d\oplus m)_{hO(d\oplus m)} \ar[r] & \displaystyle\hocolim_{n\in \cN}Y(n,d\oplus n)_{hO(d\oplus n)} \\ 
}
\] 
is homotopy cartesian for every $m\geq 0$. Here the left hand square is homotopy cartesian by~\cite[Theorem 7.6]{May-classifying}, whereas the right hand square is homotopy cartesian by the version of the lemma for discrete indexing categories, see~\cite[Proposition~4.4]{Rezk_SS-simplicial} and~\cite[Lemma~6.12 and Remark~6.13]{Sagave-S_diagram}. The proof when $(m_1,m_2)$ has the form $(d\oplus m,m)$ is completely analogous.
\end{proof}

\begin{proof}[Proof of Proposition~\ref{prop:W-model-str}]
  This is essentially~\cite[Proposition~6.16]{Sagave-S_diagram} with $\cK = \cW$ and $\cA$ being the subcategory of all identity morphisms in $\cW$. The only change, forced by the topological enrichment of $\cW$, is that Lemma~\ref{lem:fib-of-W-spaces-and-hty-car-sq} replaces~\cite[Lemma 6.12]{Sagave-S_diagram}. Properness follows as in~\cite[Section 11]{Sagave-S_diagram}, where in the proof of~\cite[Proposition 11.3]{Sagave-S_diagram} we again replace~\cite[Lemma~6.12]{Sagave-S_diagram} by Lemma~\ref{lem:fib-of-W-spaces-and-hty-car-sq}. 
\end{proof}
The following lemma justifies thinking of the homotopy colimit functor as a derived version of the colimit functor. 

\begin{lemma}\label{lem:hocolim-colim}
  For a cofibrant $\cW$-space $X$, the canonical map $X_{h\cW}\to \colim_{\cW}X$ is a weak equivalence.
\end{lemma}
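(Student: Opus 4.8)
\emph{Overview.} The plan is to verify the statement first for free $\cW$-spaces and then to transport it along cell attachments to all cofibrant $\cW$-spaces.

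\emph{The free case.} For an object $(d_1,d_2)$ of $\cW$ and a space $K$ there is a natural isomorphism $\colim_{\cW}F_{(d_1,d_2)}^{\cW}(K)\cong K$: here $\colim_{\cW}$ is left adjoint to the constant-diagram functor, $F_{(d_1,d_2)}^{\cW}$ is left adjoint to $\Ev_{(d_1,d_2)}$, and the composite of the corresponding right adjoints is the identity of $\Top$. On the $\hocolim$-side, the constant factor $K$ pulls out of the bar construction, giving a natural homeomorphism $\hocolim_{\cW}F_{(d_1,d_2)}^{\cW}(K)\cong\bigl(\hocolim_{\cW}\cW((d_1,d_2),-)\bigr)\times K$, and $\hocolim_{\cW}\cW((d_1,d_2),-)=B(*,\cW,\cW((d_1,d_2),-))$ is contractible because the defining simplicial space admits an extra degeneracy induced by the identity morphism of $(d_1,d_2)$. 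Under these identifications the map of the lemma becomes the projection $\bigl(\hocolim_{\cW}\cW((d_1,d_2),-)\bigr)\times K\to K$, which is a weak equivalence. Both $\colim_{\cW}$ and $\hocolim_{\cW}$ commute with coproducts, so the conclusion also holds for arbitrary coproducts of free $\cW$-spaces.

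\emph{Cell induction.} Both $\colim_{\cW}$ and $\hocolim_{\cW}=B(*,\cW,-)$ preserve colimits (the latter because the bar construction is built from coproducts, products with a fixed space, evaluation functors, and geometric realization), and both take the generating cofibrations $F_{(d_1,d_2)}^{\cW}(i)$ with $i\in I$ to Hurewicz cofibrations of spaces: $\colim_{\cW}$ sends $F_{(d_1,d_2)}^{\cW}(i)$ to $i$, while $\hocolim_{\cW}$ sends it to $\bigl(\hocolim_{\cW}\cW((d_1,d_2),-)\bigr)\times i$. A cofibrant $\cW$-space is a retract of an $FI$-cell complex and the assertion of the lemma is closed under retracts, so it is enough to treat an $FI$-cell complex $X$ presented as a transfinite composite $\emptyset=X_0\to X_1\to\cdots$ in which each $X_s\to X_{s+1}$ is a pushout of a coproduct of maps in $FI$. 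Applying $\colim_{\cW}$ and $\hocolim_{\cW}$ to such a pushout square yields two pushout squares of spaces along cofibrations, hence homotopy pushout squares, related by the canonical natural transformation; this transformation is a weak equivalence on the three source corners --- on the attached free pieces by the previous step and on $X_s$ by the inductive hypothesis --- so the gluing lemma gives that $\hocolim_{\cW}X_{s+1}\to\colim_{\cW}X_{s+1}$ is a weak equivalence. At limit ordinals both functors preserve the transfinite composite, which on either side is a sequential composition of cofibrations, and a transfinite composition of weak equivalences between such is again a weak equivalence. This completes the induction, and hence the proof.

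\emph{Expected obstacle.} There is nothing deep here; the only point requiring attention is the point-set bookkeeping in the cell induction --- ensuring that the squares in question genuinely become homotopy pushouts after applying both functors (for which the Hurewicz cofibrancy noted above is the relevant input) and that the gluing lemma and the stability of weak equivalences under transfinite composition apply uniformly along the whole tower. Alternatively, one may phrase the argument by observing that $\colim_{\cW}$ is a left Quillen functor for the level model structure --- its right adjoint, the constant-diagram functor, manifestly preserves level fibrations and level acyclic fibrations --- and that $B(*,\cW,-)$ represents its total left derived functor; but verifying that last identification is exactly what the two steps above accomplish. The whole discussion is parallel to the corresponding statement for $\cJ$-spaces in \cite{Sagave-S_diagram}.
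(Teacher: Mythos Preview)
Your argument is correct. It differs from the paper's in the following way: rather than doing a cell induction, the paper factors the map as
\[
X_{h\cW}=\colim_{\cW}\overline X\to \colim_{\cW}X,
\]
observes that $\colim_{\cW}$ is left Quillen for the level model structure and hence (by Ken Brown) sends level equivalences between cofibrant objects to weak equivalences, and then invokes the forward reference Lemma~\ref{lem:bar-resolution-cofibrant} to conclude that the level equivalence $\overline X\to X$ has cofibrant source when $X$ is cofibrant. Your approach trades that forward reference for an explicit verification on cells; this is longer but entirely self-contained, and the point-set bookkeeping you flag (both functors turn $FI$-cell attachments into genuine homotopy pushouts) is handled correctly. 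The ``alternative'' you sketch at the end is essentially the paper's argument, except that the paper replaces the abstract derived-functor identification by the concrete bar-resolution step.
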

\begin{proof}
The map in the lemma is obtained by applying %the colimit functor  
$\colim_{\cW}\colon \Top^{\cW}\to \Top$ to the evaluation map 
$\epsilon \colon \overline X\to X$. Since $\epsilon$ is a level equivalence and 
  $\colim_{\cW}$ is a left Quillen functor, the claim follows from the fact that 
  $\overline X$ is cofibrant by Lemma~\ref{lem:bar-resolution-cofibrant} below. 
  \end{proof}

The next lemma provides an important source of cofibrant $\cW$-spaces.
\begin{lemma}\label{lem:homotopy-Kan-cofibrant}
Let $\cK$ be a small topological category and let $F\colon \cK\to\cW$ be a continuous functor. Assume that the morphism spaces in $\cK$ are cofibrant and that the inclusions of the identity morphisms are cofibrations. Let $X$ be a $\cK$-space such that $X(\bld k)$ is cofibrant for all objects $\bld k$ in $\cK$. Then the homotopy left Kan extension $F_*^h(X)$ is a cofibrant $\cW$-space.  
\end{lemma}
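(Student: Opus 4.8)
The plan is to reduce the claim to the known fact that free $\cW$-spaces $F^{\cW}_{(d_1,d_2)}(K)$ are cofibrant when $K$ is cofibrant, together with the standard fact that the bar construction is built from these free objects by a sequence of cofibrations and colimits. First I would recall, using the $\otimes$-notation of \cite{Hollender-V_modules}, that the homotopy left Kan extension can be written as a coend/bar construction
\[
F_*^h(X)(\bld n) = B\bigl(\cW(F(-),\bld n),\cK,X\bigr),
\]
and that by the usual manipulation this is isomorphic as a $\cW$-space to the realization of the simplicial $\cW$-space $[p]\mapsto B_p$ whose $p$-simplices are
\[
B_p = \coprod_{\bld k_0,\dots,\bld k_p} F^{\cW}_{F(\bld k_0)}\bigl(\cK(\bld k_1,\bld k_0)\times\cdots\times\cK(\bld k_p,\bld k_{p-1})\times X(\bld k_p)\bigr).
\]
Here I have used that $\cW(F(\bld k_0),-)\times(\text{space})$ is precisely the free $\cW$-space $F^{\cW}_{F(\bld k_0)}$ on that space, so each $B_p$ is a coproduct of free $\cW$-spaces on the spaces $\cK(\bld k_1,\bld k_0)\times\cdots\times\cK(\bld k_p,\bld k_{p-1})\times X(\bld k_p)$.

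The key steps then are: (i) observe that each indexing space $\cK(\bld k_1,\bld k_0)\times\cdots\times\cK(\bld k_p,\bld k_{p-1})\times X(\bld k_p)$ is cofibrant, since it is a finite product of cofibrant spaces (the morphism spaces of $\cK$ are cofibrant by hypothesis, and $X(\bld k_p)$ is cofibrant by hypothesis); (ii) conclude that each $B_p$ is a cofibrant $\cW$-space, being a coproduct of free $\cW$-spaces $F^{\cW}_{\bld k}$ on cofibrant spaces, and that $F^{\cW}_{\bld k}$ preserves cofibrations (it is the left adjoint in a Quillen pair $(F^{\cW}_{\bld k},\Ev_{\bld k})$ with respect to the level model structure, whose cofibrations are the $\cW$-cofibrations); (iii) check that the simplicial $\cW$-space $[p]\mapsto B_p$ is Reedy cofibrant, i.e.\ that the latching maps $L_p B_\bullet \to B_p$ are cofibrations of $\cW$-spaces. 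For step (iii), the latching object is the subobject of degenerate simplices, and the latching map is a cofibration precisely because the degeneracies in the bar construction are built from inclusions of identity morphisms $\{\id_{\bld k}\}\hookrightarrow\cK(\bld k,\bld k)$, which are cofibrations by hypothesis, wedged and smashed with cofibrant spaces — so the latching maps are finite iterations of pushouts along generating cofibrations, hence $\cW$-cofibrations. Then the geometric realization of a Reedy cofibrant simplicial object in a model category is cofibrant, so $F_*^h(X)=|B_\bullet|$ is a cofibrant $\cW$-space.

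The main obstacle is step (iii): verifying that the latching maps of the bar construction are cofibrations in the required topological, enriched sense. The combinatorics of iterated degeneracies in $B_\bullet$ is the standard ``$\cL$-cofibrancy'' or ``good simplicial object'' argument (cf.\ the treatment of geometric realization in \cite{May-classifying}), but here one must be careful that the hypotheses on $\cK$ — cofibrant morphism spaces and cofibrant inclusions of identities — are exactly what is needed so that all the constituent maps, and hence the latching maps, are cofibrations of spaces, and that $F^{\cW}_{\bld k}$ carries them to $\cW$-cofibrations. Once this is in place, the passage from Reedy cofibrancy to cofibrancy of the realization is formal (using that $\Top^{\cW}$ with the $\cW$-model structure, or even just the level model structure, is cofibrantly generated and that realization is a left Quillen functor from Reedy cofibrant simplicial objects). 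I would also remark that it suffices to work with the level model structure here, since level cofibrations coincide with $\cW$-cofibrations; the $\cW$-local structure plays no role in this lemma.
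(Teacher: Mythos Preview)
Your proposal is correct and follows essentially the same argument as the paper: both identify the homotopy left Kan extension as the realization of a simplicial $\cW$-space whose $p$-simplices are coproducts of free $\cW$-spaces on cofibrant spaces, and both reduce cofibrancy of the realization to the statement that the inclusions of degenerate simplices (your latching maps) are cofibrations, which follows from the hypothesis on identity morphisms. The paper phrases this via the explicit skeletal filtration and pushout squares rather than invoking Reedy cofibrancy by name, but the content is identical.
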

\begin{proof}
Consider in general a simplicial $\cW$-space $Z_{\bullet}$ and observe that the geometric realization 
$Z=|Z_{\bullet}|$ admits a filtration by $\cW$-spaces $Z^{(n)}$ for $n\geq 0$, such that $Z^{(0)}=Z_0$, 
$\colim_n Z^{(n)}=Z$, and there are pushout diagrams 
\[
\xymatrix@-.5pc{
SZ_{n-1}\times \Delta^n\cup_{SZ_{n-1}\times\partial \Delta^n}Z_n\times \partial\Delta^n \ar[r]\ar[d] & Z_n\times \Delta^n\ar[d]\\
Z^{(n-1)}\ar[r] & Z^{(n)}
}
\]  
where $SZ_{n-1}\subseteq Z_n$ is the union of the degenerate sub $\cW$-spaces $s_i(Z_{n-1})$ for $0\leq i\leq n$. Thus, we see that the $\cW$-space $Z$ is cofibrant provided that $Z_0$ is cofibrant and the maps $SZ_{n-1}\to Z_n$ are cofibrations. In the case at hand, $F_*^h(X)$ is the geometric realization of a simplicial $\cW$-space with $p$-simplices 
\[
\coprod_{\bld k_0,\dots,\bld k_p}
F^{\cW}_{F(\bld k_0)}\big(\cK(\bld k_1,\bld k_0)\times\dots\times \cK(\bld k_p,\bld k_{p-1})\times X(\bld k_p)\big),
\] 
the coproduct being over all $(p+1)$-tuples $\bld k_0$,\dots, $\bld k_p$ of objects in $\cK$. Since the functors $F_{F(\bld k_0)}^{\cW}$ are left Quillen functors, the assumptions in the lemma ensure that this is a cofibrant $\cW$-space for all $p$ and in particular for $p=0$. Furthermore, the inclusions of the degenerate sub $\cW$-spaces are obtained by applying the functors $F^{\cW}_{F(\bld k_0)}$ to the evident cofibrations in $\Top$, hence are  cofibrations of $\cW$-spaces.
\end{proof}

This lemma applies in particular to the identity functor on $\cW$:
\begin{lemma}\label{lem:bar-resolution-cofibrant}
Let $X$ be a $\cW$-space and suppose that $X(n_1,n_2)$ is cofibrant as an object in $\Top$ for all 
$(n_1,n_2)$. Then the bar resolution $\overline X$ is a cofibrant $\cW$-space.\qed
\end{lemma}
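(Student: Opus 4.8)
The plan is to deduce Lemma~\ref{lem:bar-resolution-cofibrant} as an immediate special case of Lemma~\ref{lem:homotopy-Kan-cofibrant}. Recall that the bar resolution $\overline X$ is by definition the homotopy left Kan extension of $X$ along the identity functor $\id\colon\cW\to\cW$, that is $\overline X(\bld n) = B(\cW(-,\bld n),\cW,X)$. So the first step is simply to observe that we are in the situation of Lemma~\ref{lem:homotopy-Kan-cofibrant} with $\cK=\cW$ and $F=\id_{\cW}$.

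The substance of the argument is then to check that the three hypotheses of Lemma~\ref{lem:homotopy-Kan-cofibrant} are met. First, the morphism spaces of $\cW$ must be cofibrant in $\Top$: this follows because $\cW((m_1,m_2),(n_1,n_2))$ is a quotient of a product of Stiefel-type manifolds $\cO(m_i\oplus m, n_i)$ by a free $O(m)$-action, hence is a closed manifold (or empty), and such spaces are CW and therefore cofibrant. Second, the inclusions of the identity morphisms must be cofibrations; since $\cW$ is a topological category of the usual geometric type, each $\{\id_{(n_1,n_2)}\}\hookrightarrow\cW((n_1,n_2),(n_1,n_2))\cong O(n_1)\times O(n_2)$ is the inclusion of a point into a manifold, which is a cofibration. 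Third, the hypothesis on $X$ is exactly the hypothesis we are given: $X(n_1,n_2)$ is cofibrant for all $(n_1,n_2)$.

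With all three hypotheses verified, Lemma~\ref{lem:homotopy-Kan-cofibrant} applies directly and yields that $\id^h_*(X)=\overline X$ is a cofibrant $\cW$-space, which is the assertion. The only genuinely nontrivial point — and hence the one I would spell out, however briefly — is the verification that $\cW$ satisfies the standing cofibrancy conditions on its morphism spaces and identity inclusions; everything else is a formal invocation. I expect this to be routine given the explicit description of the morphism spaces in the definition of $\cW$, so in practice the proof is essentially a one-line reference, which is why the statement is marked \qed in the excerpt.
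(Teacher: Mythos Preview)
Your proposal is correct and matches the paper's approach exactly: the paper states this lemma with a \qed immediately after remarking that Lemma~\ref{lem:homotopy-Kan-cofibrant} ``applies in particular to the identity functor on $\cW$,'' so the intended proof is precisely the specialization you describe. Your explicit verification of the hypotheses on the morphism spaces of $\cW$ is a welcome elaboration of what the paper leaves implicit.
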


\subsection{The $\boxtimes$-product on $\Top^{\cW}$} 
Specializing the general theory in Section~\ref{subset:boxtimes-product} to the permutative structure of $\cW$, we get a symmetric monoidal product $\boxtimes$ on $\Top^{\cW}$ with $U^{\cW}$ as monoidal unit. 

\begin{proposition}\label{prop:boxtimes-w-cof-pres-w-equiv}
If $X$ is a cofibrant $\cW$-space, then the functor $X\boxtimes(-)$ preserves $\cW$-equivalences. 
\end{proposition}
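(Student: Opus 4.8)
The plan is to reduce the statement to two ingredients: (i) that $X \boxtimes (-)$ preserves level equivalences between cofibrant objects, combined with a monoidal cofibrant replacement argument, and (ii) that the homotopy colimit functor $(-)_{h\cW}$ intertwines the $\boxtimes$-product with the cartesian product of spaces in a way compatible with $\cW$-equivalences. The key point making (ii) work is that $B\cW$ is a permutative category and that $\hocolim_{\cW}$ is lax symmetric monoidal; more precisely, for $\cW$-spaces $X$ and $Y$ there is a natural map $X_{h\cW} \times Y_{h\cW} \to (X\boxtimes Y)_{h\cW}$ which is a weak equivalence whenever $X$ and $Y$ are cofibrant. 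This is the $\cW$-space analogue of \cite[Proposition~6.20 (or the corresponding statement)]{Sagave-S_diagram} for $\cJ$-spaces, whose proof carries over: one identifies $(X\boxtimes Y)_{h\cW}$ with a bar construction over $\cW\times\cW$ via the fact that $\boxtimes$ is a left Kan extension along $\oplus$, then uses that $\oplus\colon \cW\times\cW\to\cW$ is homotopy cofinal (indeed $B(\cW\times\cW) \to B\cW$ is a weak equivalence since it models the $H$-space multiplication on $BO\times\mZ$, which after group completion is an equivalence onto a component-sum — more carefully, one uses that the relevant bar constructions compute homotopy colimits and compares them).

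First I would record the level-wise statement: if $f\colon Y\to Y'$ is a level equivalence of cofibrant $\cW$-spaces, then $X\boxtimes f$ is a level equivalence. This follows from Lemma~\ref{lem:free-prod} together with the skeletal/pushout-product analysis of $\boxtimes$ on cofibrant objects (both $X$ and $Y$ are built from free $\cW$-spaces $F^{\cW}_{(d_1,d_2)}(K)$ along cofibrations, and $\boxtimes$ of free $\cW$-spaces is again free), exactly as in the standard arguments for diagram categories referenced after the statement of the $\boxtimes$-product. Then, for a general $\cW$-equivalence $f\colon Y\to Y'$ between cofibrant $\cW$-spaces, I would factor the comparison: we want $(X\boxtimes Y)_{h\cW}\to (X\boxtimes Y')_{h\cW}$ to be a weak equivalence. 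Using the natural transformation $X_{h\cW}\times Y_{h\cW}\to (X\boxtimes Y)_{h\cW}$ and its counterpart for $Y'$, together with the fact that it is a weak equivalence on cofibrant inputs, the map $(X\boxtimes Y)_{h\cW}\to (X\boxtimes Y')_{h\cW}$ is identified up to weak equivalence with $\id_{X_{h\cW}}\times f_{h\cW}\colon X_{h\cW}\times Y_{h\cW}\to X_{h\cW}\times Y'_{h\cW}$, which is a weak equivalence since $f_{h\cW}$ is by hypothesis. To handle an arbitrary (not necessarily cofibrant) $\cW$-equivalence, I would first replace $Y$ and $Y'$ by cofibrant models using the bar resolution $\overline{(-)}$ of Lemma~\ref{lem:bar-resolution-cofibrant}: the evaluation map $\epsilon\colon\overline Y\to Y$ is a level equivalence, hence a $\cW$-equivalence, and by the level-equivalence case above $X\boxtimes\overline Y\to X\boxtimes Y$ is a level equivalence, so it suffices to treat the cofibrant case.

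The step I expect to be the main obstacle is establishing that the lax monoidal comparison map $X_{h\cW}\times Y_{h\cW}\to (X\boxtimes Y)_{h\cW}$ is a weak equivalence for cofibrant $X$ and $Y$. One has to be careful because $\cW$ is only a topological (not discrete) permutative category, so the cofinality arguments must be run in the enriched setting — but this is precisely the kind of adaptation already carried out in Lemma~\ref{lem:fib-of-W-spaces-and-hty-car-sq} and Proposition~\ref{prop:Grothendieck-hocolim-equivalence}, and the same techniques (Thomason-style homotopy colimit theorems, plus the identification of $\cW_{\{0\}}$ with the Grothendieck construction $\VintO$ and the cofinal functors $\cN\to\cV$) apply. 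Concretely I would unwind $(X\boxtimes Y)_{h\cW} = B(*,\cW, X\boxtimes Y)$, use that $X\boxtimes Y = \oplus_!(X\times Y)$ is a left Kan extension to rewrite this as $B(*,\cW\times\cW, X\times Y)$ up to weak equivalence (this uses that $\oplus$ is homotopy cofinal, which in turn follows from $B(\cW\times\cW)\simeq B\cW\times B\cW$ and the structure of the argument in \cite[\S6]{Hollender-V_modules}), and finally identify $B(*,\cW\times\cW,X\times Y)$ with $B(*,\cW,X)\times B(*,\cW,Y) = X_{h\cW}\times Y_{h\cW}$ by Fubini for bar constructions. The cofibrancy of $X$ and $Y$ enters to guarantee that these levelwise weak equivalences of simplicial spaces realize to weak equivalences and that no derived-functor corrections are needed.
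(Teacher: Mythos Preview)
Your outline has a real gap in the reduction step. You prove that $X\boxtimes(-)$ takes level equivalences \emph{between cofibrant $\cW$-spaces} to level equivalences, and then you apply this to $\epsilon\colon\overline Y\to Y$ with $Y$ arbitrary. But $Y$ is not assumed cofibrant, so your level-equivalence statement does not cover this case; the map $X\boxtimes\overline Y\to X\boxtimes Y$ is precisely an instance of what you are trying to prove (for the special class of $\cW$-equivalences that happen to be level equivalences). Closing this gap requires showing that for cofibrant $X$ the functor $X\boxtimes(-)$ preserves \emph{all} level equivalences, with no hypothesis on the target---and that is exactly where the paper's argument lives.

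The paper's route is rather different from yours. It performs the cell reduction on $X$, not on $Y$: one reduces to $X=F^{\cW}_{(k_1,k_2)}(L)$ and then uses the explicit description
\[
(F^{\cW}_{(k_1,k_2)}(L)\boxtimes Y)(n_1,n_2)\;\cong\;\cW((k_1,k_2)\oplus(l_1,l_2),(n_1,n_2))\times_{\cW((l_1,l_2),(l_1,l_2))}Y(l_1,l_2)
\]
together with the fact that the $\cW((l_1,l_2),(l_1,l_2))$-action on the morphism space is free to see that $F^{\cW}_{(k_1,k_2)}(L)\boxtimes(-)$ preserves \emph{all} level equivalences. One then replaces $Y$ by its bar resolution (now legitimately), identifies $F^{\cW}_{(k_1,k_2)}(L)\boxtimes\overline Y$ with $L\times F^h_*(Y)$ for the shift functor $F=(k_1,k_2)\oplus(-)$, and concludes from the weak equivalence $\hocolim_{\cW}(L\times F^h_*(Y))\simeq L\times Y_{h\cW}$. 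Your approach instead tries to establish first that $X_{h\cW}\times Y_{h\cW}\to(X\boxtimes Y)_{h\cW}$ is a weak equivalence for cofibrant inputs (which is the paper's later Lemma~\ref{lem:hW-monodial-equivalence}) and deduce the proposition from that. This is not unreasonable, but your sketch of that step is not quite right: the claim that ``$\oplus\colon\cW\times\cW\to\cW$ is homotopy cofinal'' would give $\hocolim_{\cW}Z\simeq\hocolim_{\cW\times\cW}(Z\circ\oplus)$, which is the wrong direction; what is actually needed is that for cofibrant inputs the strict Kan extension $X\boxtimes Y=\oplus_!(X\times Y)$ computes the homotopy Kan extension, and then one uses transitivity of homotopy colimits. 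So even modulo the gap above, your step~(ii) needs a different justification than the one you give.
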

\begin{proof}
Most parts of this argument are analogous to the corresponding statements for
discrete indexing categories in~\cite[Proposition 8.5]{Sagave-S_diagram}. In particular, a cell reduction argument reduces the claim to the case where $X = F^{\cW}_{(k_1,k_2)}(L)$ for a CW complex $L$. If $Y$ is a $\cW$-space, an inspection of the coequalizer defining $\boxtimes$ shows that $(F^{\cW}_{(k_1,k_2)}(L) \boxtimes Y)(n_1,n_2)$ can be identified with the quotient space
\begin{equation}\label{eq:FWk-boxtimes-as-quotient}
 \cW((k_1,k_2) \oplus (l_1,l_2), (n_1,n_2) )\times_{\cW((l_1,l_2) ,(l_1,l_2) )}(L\times Y(l_1,l_2))
\end{equation}
if there exists an object $(l_1,l_2)$ with $(k_1,k_2)\oplus (l_1,l_2) = (n_1,n_2)$. If no such object exists, 
$(F^{\cW}_{(k_1,k_2)}(L) \boxtimes Y)(n_1,n_2) $ is empty. Since 
$\cW((k_1,k_2) \oplus (l_1,l_2), (n_1,n_2) ) $ is a free $\cW((l_1,l_2) ,(l_1,l_2) )$-CW complex, it follows that $F^{\cW}_{(k_1,k_2)}(L) \boxtimes(-)$ preserves level equivalences of $\cW$-spaces. 
Hence the bar resolution $\ovl{Y} \to Y$ induces a level equivalence $F^{\cW}_{(k_1,k_2)}(L) \boxtimes \ovl{Y}\to F^{\cW}_{(k_1,k_2)}(L) \boxtimes Y$. Writing $F = (k_1,k_2)\oplus(-) \colon \cW \to \cW$, we can identify $F^{\cW}_{(k_1,k_2)}(*)\boxtimes(-)$ with the left Kan extension along $F$. Moreover, the isomorphism in ~\cite[3.1 Proposition(i)]{Hollender-V_modules} shows that the $\cW$-space $F^{\cW}_{(k_1,k_2)}(L) \boxtimes \ovl{Y}$ is isomorphic to $L\times F^h_*(Y)$, the product of $L$ with the homotopy left Kan extension of $Y$ along $F$. Since there is also a natural weak equivalence 
\[
\hocolim_{\cW} (L\times  F^h_*(Y)) \xrightarrow{\sim} L \times \hocolim_{\cW}Y
\]  
it follows that $F^{\cW}_{(k_1,k_2)}(L)\boxtimes(-)$ preserves $\cW$-equivalences as claimed. 
\end{proof}

\begin{corollary}\label{cor:pproduct-and-monoid-axiom}
The $\cW$-model structure satisfies the pushout-product axiom and the monoid axiom. 
\end{corollary}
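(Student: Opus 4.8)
The plan is to verify both axioms by reducing them to the generating (acyclic) cofibrations, in the spirit of the treatment of $\cJ$-spaces in \cite[\S8]{Sagave-S_diagram}, with Proposition~\ref{prop:boxtimes-w-cof-pres-w-equiv} and the left properness of Proposition~\ref{prop:W-model-str} as the essential homotopical inputs. By the standard criterion for the pushout-product axiom in a cofibrantly generated model category it suffices to check: (a) the pushout-product of two generating cofibrations is a cofibration, and (b) the pushout-product of a generating cofibration with a generating acyclic cofibration is an acyclic cofibration. Part (a) is formal: a generating cofibration has the form $F^{\cW}_{\bld{d}}(i)$ with $i$ in $I$ and $\bld{d}$ an object of $\cW$, and since $F^{\cW}_{\bld{d}\oplus\bld{e}}$ is a left adjoint, the natural isomorphism of Lemma~\ref{lem:free-prod} is compatible with the pushouts entering the pushout-product construction; it therefore identifies the pushout-product of $F^{\cW}_{\bld{d}}(i)$ and $F^{\cW}_{\bld{e}}(i')$ with $F^{\cW}_{\bld{d}\oplus\bld{e}}$ applied to the pushout-product of $i$ and $i'$ in $\Top$. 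The latter is a cofibration of spaces, and $F^{\cW}_{\bld{d}\oplus\bld{e}}$ preserves cofibrations because the $\cW$-cofibrations are precisely the level cofibrations.

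For part (b), write the generating cofibration as $F^{\cW}_{\bld{d}}(i)$ with $i\colon S^{n-1}\to D^{n}$, and let $j\colon A\to B$ be a generating acyclic cofibration, which by construction may be taken to be a cofibration between cofibrant $\cW$-spaces. The $\cW$-spaces $F^{\cW}_{\bld{d}}(S^{n-1})$ and $F^{\cW}_{\bld{d}}(D^{n})$ are cofibrant, so Proposition~\ref{prop:boxtimes-w-cof-pres-w-equiv} shows that $F^{\cW}_{\bld{d}}(S^{n-1})\boxtimes j$ and $F^{\cW}_{\bld{d}}(D^{n})\boxtimes j$ are $\cW$-equivalences. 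In the pushout square defining the pushout-product of $F^{\cW}_{\bld{d}}(i)$ and $j$, one leg is the cobase change of the $\cW$-equivalence $F^{\cW}_{\bld{d}}(S^{n-1})\boxtimes j$ along $F^{\cW}_{\bld{d}}(i)\boxtimes A$; by the analysis of \eqref{eq:FWk-boxtimes-as-quotient} used in the proof of Proposition~\ref{prop:boxtimes-w-cof-pres-w-equiv}, this latter map is a classical (Hurewicz) cofibration in every level. Applying $\hocolim_{\cW}$ — which preserves pushouts, takes level-wise classical cofibrations to classical cofibrations of spaces, and detects $\cW$-equivalences — reduces the claim that the cobase change is again a $\cW$-equivalence to the gluing lemma in $\Top$. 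Since $F^{\cW}_{\bld{d}}(D^{n})\boxtimes j$ factors through this pushout, the two-out-of-three property identifies the pushout-product of $F^{\cW}_{\bld{d}}(i)$ and $j$ as a $\cW$-equivalence, and part (a) shows it is a cofibration.

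For the monoid axiom one must show that every map obtained from maps of the form $j\boxtimes X$, with $j$ an acyclic cofibration and $X$ a $\cW$-space, by cobase change, transfinite composition, and retract is a $\cW$-equivalence. Because $(-)\boxtimes X$ commutes with these operations, it suffices to treat the generating acyclic cofibrations $j$, again taken to be cofibrations between cofibrant $\cW$-spaces. For such a $j\colon A\to B$ and an arbitrary $\cW$-space $X$, choose a cofibrant replacement $X^{c}\xrightarrow{\ \sim\ }X$; applying Proposition~\ref{prop:boxtimes-w-cof-pres-w-equiv} to the cofibrant objects $A$, $B$ and $X^{c}$ shows, via two-out-of-three in the square comparing $j\boxtimes X^{c}$ with $j\boxtimes X$, that $j\boxtimes X$ is a $\cW$-equivalence, and as in part (b) it is also a level-wise classical cofibration. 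Level-wise classical cofibrations that are $\cW$-equivalences are closed under cobase change and transfinite composition: this again follows by applying $\hocolim_{\cW}$ and using the gluing lemma together with the homotopy invariance of sequential colimits along classical cofibrations in $\Top$. Hence every map in the relevant class is a $\cW$-equivalence.

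The step that is not purely formal is the monoid axiom: it forces us to form the $\boxtimes$-product with arbitrary, in particular non-cofibrant, $\cW$-spaces $X$, for which $X\boxtimes(-)$ is not homotopically well behaved. The device that rescues the argument is the bookkeeping with level-wise classical cofibrations above, which shows that the maps $j\boxtimes X$ remain flat enough that left-properness-type arguments still apply; alternatively, one may simply transcribe the corresponding arguments for $\cJ$-spaces from \cite[\S8]{Sagave-S_diagram}, with Proposition~\ref{prop:boxtimes-w-cof-pres-w-equiv} substituted for its $\cJ$-space counterpart.
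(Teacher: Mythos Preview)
Your proposal is correct and takes essentially the same approach as the paper: the paper's own proof simply cites the analogous results \cite[Propositions~8.4 and 8.6]{Sagave-S_diagram} for $\cJ$-spaces and says that Proposition~\ref{prop:boxtimes-w-cof-pres-w-equiv} replaces \cite[Proposition~8.2]{Sagave-S_diagram}, which is precisely the substitution you carry out (and indeed explicitly suggest in your final sentence). One minor slip in wording: the reason $F^{\cW}_{\bld d\oplus\bld e}$ preserves cofibrations is not that ``$\cW$-cofibrations are precisely the level cofibrations'' (they are not the \emph{level-wise} cofibrations), but simply that $F^{\cW}_{\bld d\oplus\bld e}$ is left Quillen and carries the generating cofibrations of $\Top$ to generating cofibrations of $\Top^{\cW}$ by construction.
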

\begin{proof}
This is analogous to~\cite[Propositions 8.4 and 8.6]{Sagave-S_diagram}, with
the previous proposition replacing~\cite[Proposition 8.2]{Sagave-S_diagram}. 
\end{proof}

We shall view the homotopy colimit functor $(-)_{h\cW}\colon \Top^{\cW}\to \Top$ as a lax monoidal functor with monoidal product defined by the natural map
\[
X_{h\cW}\times Y_{h\cW}\xr{\cong} \big(X\times Y\big)_{h(\cW\times \cW)} \to 
\big(X\boxtimes Y\circ\oplus\big)_{h(\cW\times \cW)} \to \big(X\boxtimes Y\big)_{h\cW}.
\]
Arguing as in the case of $\cI$-spaces \cite[Lemma~2.25]{Sagave-S_group-completion} we get the next result.
\begin{lemma}\label{lem:hW-monodial-equivalence}
The natural map $X_{h\cW}\times Y_{h\cW}\to\big(X\boxtimes Y\big)_{h\cW}$ is a weak homotopy equivalence provided that either $X$ or $Y$ is cofibrant. \qed
\end{lemma}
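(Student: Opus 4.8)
The plan is to reduce the statement to the case where both $X$ and $Y$ are cofibrant, and then to identify the map in question, up to weak equivalence, with a point-set isomorphism between ordinary colimits. First I would reduce to the cofibrant case. Without loss of generality assume $X$ is cofibrant, and choose a cofibrant replacement $q\colon Y^c \xrightarrow{\sim} Y$ (a $\cW$-equivalence with $Y^c$ cofibrant). By Proposition~\ref{prop:boxtimes-w-cof-pres-w-equiv} the map $X\boxtimes q$ is a $\cW$-equivalence, hence induces a weak equivalence on homotopy colimits, and $q$ itself induces a weak equivalence $Y^c_{h\cW}\to Y_{h\cW}$; since the natural map of the statement is natural in both variables, comparing its instances for $(X,Y^c)$ and $(X,Y)$ shows it suffices to treat the case $Y = Y^c$. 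So both $X$ and $Y$ may be assumed cofibrant. Applying the pushout-product axiom (Corollary~\ref{cor:pproduct-and-monoid-axiom}) to the cofibrations $\emptyset \to X$ and $\emptyset\to Y$, and using that $\boxtimes$ preserves the initial $\cW$-space in each variable, we conclude that $X\boxtimes Y$ is cofibrant as well.

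Next I would compare everything with ordinary colimits. Recall that $X\boxtimes Y = \oplus_{!}(X\times Y)$ is the left Kan extension along $\oplus\colon \cW\times\cW\to\cW$ of the external product $(\bld h,\bld k)\mapsto X(\bld h)\times Y(\bld k)$, and that the natural map of the statement is the canonical homeomorphism $X_{h\cW}\times Y_{h\cW}\cong(X\times Y)_{h(\cW\times\cW)}$ followed by the composite $(X\times Y)_{h(\cW\times\cW)}\to\big((X\boxtimes Y)\circ\oplus\big)_{h(\cW\times\cW)}\to (X\boxtimes Y)_{h\cW}$. Consider the commutative square
\[
\xymatrix@-.5pc{
(X\times Y)_{h(\cW\times\cW)} \ar[r]\ar[d] & (X\boxtimes Y)_{h\cW} \ar[d] \\
\colim_{\cW\times\cW}(X\times Y) \ar[r] & \colim_{\cW}(X\boxtimes Y)
}
\]
whose vertical maps are the canonical maps from homotopy colimits to colimits and whose horizontal maps are the above composite and the analogous composite of colimits. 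Since left Kan extensions compose, $\colim_{\cW}\circ\,\oplus_{!}\cong\colim_{\cW\times\cW}$, and under this identification the bottom map is the corresponding structure isomorphism, hence an isomorphism. The right-hand vertical map is a weak equivalence because $X\boxtimes Y$ is cofibrant (Lemma~\ref{lem:hocolim-colim}). For the left-hand vertical map I would use the homeomorphisms $(X\times Y)_{h(\cW\times\cW)}\cong X_{h\cW}\times Y_{h\cW}$ and $\colim_{\cW\times\cW}(X\times Y)\cong\colim_{\cW}X\times\colim_{\cW}Y$ --- the latter because $\times$ preserves colimits in each variable in the cartesian closed category $\Top$ --- under which it becomes the product of the canonical maps $X_{h\cW}\to\colim_{\cW}X$ and $Y_{h\cW}\to\colim_{\cW}Y$, each a weak equivalence by Lemma~\ref{lem:hocolim-colim}. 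Therefore the top map, and hence the natural map of the statement, is a weak equivalence.

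The one step that is not purely formal is checking that the displayed square commutes, i.e.\ that the homotopy-colimit description of the natural map is compatible with the Kan-extension isomorphism on colimits. This I would verify by unwinding definitions: the canonical maps $\hocolim\to\colim$ are natural both in the diagram and in the indexing category, and the structure isomorphism $\colim_{\cW}\oplus_{!}(-)\cong\colim_{\cW\times\cW}(-)$ is by construction the composite of the unit of the Kan extension adjunction with the comparison map $\colim_{\cW\times\cW}\big((X\boxtimes Y)\circ\oplus\big)\to\colim_{\cW}(X\boxtimes Y)$ --- precisely the data occurring in the middle term of the composite defining the natural map. I expect this bookkeeping to be the only real work; there is no homotopical obstacle beyond the inputs already recorded above.
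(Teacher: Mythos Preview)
Your proof is correct. The paper does not spell out a proof of this lemma; it only points to the analogous argument for $\cI$-spaces in \cite[Lemma~2.25]{Sagave-S_group-completion}. Your argument---reducing to both variables cofibrant via Proposition~\ref{prop:boxtimes-w-cof-pres-w-equiv}, then replacing homotopy colimits by colimits via Lemma~\ref{lem:hocolim-colim} and invoking the formal isomorphism $\colim_{\cW}\circ\oplus_{!}\cong\colim_{\cW\times\cW}$---is a clean way to carry this out, and all the ingredients you cite appear earlier in the paper, so there is no circularity. The commutativity of your displayed square is, as you note, just naturality of the comparison map $\hocolim\to\colim$ in both the diagram variable and the indexing functor, together with the standard description of the composition-of-left-adjoints isomorphism via the Kan extension unit; this is routine. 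One small remark: the identification of the left vertical map with the product of the two individual comparison maps uses that geometric realization commutes with products in $\Top$ and that products distribute over the coproducts defining the bar construction, both of which hold because $\Top$ is cartesian closed---you might make this explicit, since the paper is careful elsewhere about such point-set issues.
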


We say that a map of $\cW$-spaces $X \to Y$ is a \emph{positive $\cW$-fibration} if the map $X(n_1,n_2)\to Y(n_1,n_2)$ is a fibration for every object $(n_1,n_2)$ with $n_1\geq 1$, and if 
the square~\eqref{eq:hty-cart-for-W-fib} is homotopy cartesian for every morphism $(m_1,m_2)\to (n_1,n_2)$ with $m_1\geq 1$. Using Proposition~\ref{prop:W-model-str}, one can show that the positive $\cW$-fibrations and the $\cW$-equivalences participate in a cofibrantly generated proper model structure which we shall refer to as the \emph{positive $\cW$-model structure}. The cofibrations in this model structure are called \emph{positive cofibrations}. 

Since every positive cofibration is a cofibration in the (absolute)
$\cW$-model structure of Proposition~\ref{prop:W-model-str}, the
statements of Proposition~\ref{prop:boxtimes-w-cof-pres-w-equiv} and
Corollary~\ref{cor:pproduct-and-monoid-axiom} also hold for the positive $\cW$-model structure. The relevance of the
positive $\cW$-model structure comes from the following result.
\begin{theorem}\label{thm:positive-model-structure}
The category of commutative $\cW$-space monoids admits a positive $\cW$-model structure where a map is a fibration or a weak equivalence if and only if the underlying map of $\cW$-spaces is a positive fibration or $\cW$-equivalence.
\end{theorem}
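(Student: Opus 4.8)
The plan is to obtain the model structure by transferring the positive $\cW$-model structure on $\Top^{\cW}$ along the free--forgetful adjunction
\[
\mathbb{C}\colon \Top^{\cW}\rightleftarrows \cC\Top^{\cW}\colon U,
\]
where $\cC\Top^{\cW}$ denotes the category of commutative $\cW$-space monoids and $\mathbb{C}(X)=\coprod_{n\geq 0}X^{\boxtimes n}/\Sigma_n$ is the free commutative $\cW$-space monoid on $X$. One declares a map in $\cC\Top^{\cW}$ to be a weak equivalence (respectively a fibration) precisely when $U$ of it is a $\cW$-equivalence (respectively a positive $\cW$-fibration), and takes $\mathbb{C}(FI^{+})$ and $\mathbb{C}(FJ^{+})$ as the generating cofibrations and generating acyclic cofibrations, where $FI^{+}$ and $FJ^{+}$ are those of the positive $\cW$-model structure. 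Since every object of $\Top$ is small and $U$ preserves filtered colimits (because $\boxtimes$ does, in each variable), Kan's transfer criterion reduces the theorem to a single assertion: every map obtained as a transfinite composition of pushouts of maps in $\mathbb{C}(FJ^{+})$ has underlying $\cW$-space map a $\cW$-equivalence. Once this is established, cofibrant generation of the transferred classes, and left and right properness of the resulting model structure, follow formally from properness of the positive $\cW$-model structure, exactly as for commutative $\cJ$-space monoids in \cite{Sagave-S_diagram}.

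To verify that assertion one analyses pushouts of free maps. For a commutative $\cW$-space monoid $A$ and a positive cofibration $j\colon X\to Y$ in $\Top^{\cW}$, the underlying $\cW$-space of the pushout $A\to A\boxtimes_{\mathbb{C}(X)}\mathbb{C}(Y)$ is the colimit of a filtration $A=P_0\to P_1\to P_2\to\cdots$ with pushout squares
\[
\xymatrix@-.5pc{
A\boxtimes_{\Sigma_n}L_n(j) \ar[r]\ar[d] & P_{n-1}\ar[d]\\
A\boxtimes_{\Sigma_n}Y^{\boxtimes n}\ar[r] & P_n,
}
\]
in which $\Sigma_n$ permutes the $\boxtimes$-factors (acting trivially on $A$), the notation $A\boxtimes_{\Sigma_n}(-)$ stands for $(A\boxtimes(-))/\Sigma_n$, and $L_n(j)\subseteq Y^{\boxtimes n}$ denotes the domain of the $n$-fold iterated pushout--product of $j$. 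Hence it suffices to show that $A\boxtimes_{\Sigma_n}(L_n(j)\to Y^{\boxtimes n})$ is a $\cW$-equivalence. A cell-induction on $j$ (using that $\boxtimes$ preserves colimits in each variable) reduces this to the case of a generating positive acyclic cofibration $j=F^{\cW}_{(d_1,d_2)}(i)$ with $d_1\geq 1$ and $i\colon D^k\to D^k\times I$, and a transfinite induction over a cell presentation reduces the general $A$ to the case built from $U^{\cW}$.

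The essential new ingredient --- and the step I expect to be the main obstacle --- is the homotopical control of these symmetric powers, and it is here that the \emph{positive} model structure is indispensable. By Lemma~\ref{lem:free-prod}, for such a $j$ one has $Y^{\boxtimes n}\iso F^{\cW}_{(nd_1,nd_2)}(K)$ with $K=(D^k\times I)^{\times n}$, and $\Sigma_n$ acts through the block--permutation homomorphism $\Sigma_n\to O(nd_1)\times O(nd_2)$ together with the coordinate permutation of $K$. Since $d_1\geq 1$ this block--permutation representation is faithful, and because morphisms of $\cV$ are injective isometries one checks that $\Sigma_n$ then acts \emph{freely} on the morphism spaces $\cW((nd_1,nd_2),-)$, hence --- via the identification \eqref{eq:FWk-boxtimes-as-quotient} of $A\boxtimes F^{\cW}_{(nd_1,nd_2)}(K)$ --- freely on $A\boxtimes Y^{\boxtimes n}$ and, by restriction, on $A\boxtimes L_n(j)$. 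Consequently the canonical maps out of the Borel constructions are level equivalences on these $\cW$-spaces, so on them the strict orbit functor $(-)/\Sigma_n$ agrees up to level equivalence with the homotopy orbit functor $(-)_{h\Sigma_n}$; and since $(-)_{h\cW}$ commutes with $(-)_{h\Sigma_n}$ (both being homotopy colimits), $(-)_{h\Sigma_n}$ preserves $\cW$-equivalences. Now the iterated pushout--product $L_n(j)\to Y^{\boxtimes n}$ is an acyclic cofibration by repeated application of the pushout--product axiom (Corollary~\ref{cor:pproduct-and-monoid-axiom}), so $A\boxtimes(L_n(j)\to Y^{\boxtimes n})$ is a $\cW$-equivalence by the monoid axiom (Corollary~\ref{cor:pproduct-and-monoid-axiom} again); being $\Sigma_n$-equivariant with free $\Sigma_n$-action on both sides, it remains a $\cW$-equivalence after applying $(-)/\Sigma_n\simeq(-)_{h\Sigma_n}$, which is exactly the desired statement about $A\boxtimes_{\Sigma_n}(L_n(j)\to Y^{\boxtimes n})$.

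In summary, the argument is a transfer of model structures in which the only nonformal point is the homotopy theory of the symmetric powers $Y^{\boxtimes n}/\Sigma_n$ in the positive $\cW$-model structure --- concretely, the freeness of the $\Sigma_n$-action on positive free cells forced by the condition $d_1\geq 1$, which is where Proposition~\ref{prop:boxtimes-w-cof-pres-w-equiv} and Corollary~\ref{cor:pproduct-and-monoid-axiom} are brought to bear --- while the pushout filtration of free commutative monoids, the cell and transfinite inductions, and the deduction of properness and cofibrant generation all run in parallel with the treatment of commutative $\cJ$-space monoids in \cite{Sagave-S_diagram}.
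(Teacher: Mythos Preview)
Your proposal is correct and follows essentially the same approach as the paper: transfer along the free--forgetful adjunction, with the one nonformal step being the freeness of the $\Sigma_n$-action on positive free cells when $d_1\geq 1$, established via the explicit identification~\eqref{eq:FWk-boxtimes-as-quotient}, while the remaining filtration, cell inductions, and properness arguments are imported from the discrete $\cJ$-space case in~\cite{Sagave-S_diagram}. The paper's own proof is a two-sentence sketch pointing to exactly these ingredients, so your write-up is a faithful (and more detailed) expansion of it.
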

\begin{proof}
Using the homotopy invariance of the $\boxtimes$-product from Proposition ~\ref{prop:boxtimes-w-cof-pres-w-equiv}, the construction of this model structure is completely analogous to its discrete counterpart in~\cite[Lemma 9.5]{Sagave-S_diagram}. The only difference is that we here use the expression~\eqref{eq:FWk-boxtimes-as-quotient} considered in the proof of Proposition~\ref{prop:boxtimes-w-cof-pres-w-equiv} to see that the $\Sigma_i$-action on $F^{\cW}_{(k_1,k_2)^{\concat i}}\boxtimes *$ is free if $k_1 \geq 1$. 
\end{proof}

\section{$\cW$-spaces and graded orthogonal spectra}\label{sec:W-spaces-SpO}
In this section we set up the adjunction relating $\cW$-spaces to orthogonal spectra. This is the orthogonal analogue of the adjunction in \cite{Sagave-S_diagram} relating $\cJ$-spaces to symmetric spectra. We write $S^n$ for the one-point compactification of $\mR^n$ throughout. 

\subsection{Orthogonal spectra}
Following \cite{MMSS}, an orthogonal spectrum $E$ is a sequence of based spaces $E_n$ for $n\geq 0$, together with a based left $O(n)$-action on~$E_n$ and a family of based structure maps 
$E_n\wedge S^1\to E_{n+1}$ for $n\geq 0$ such that the iterated structure maps 
$E_m\wedge S^n\to E_{m+n}$ are  $(O(m)\times O(n))$-equivariant. Here $O(m)\times O(n)$ acts on 
$E_{m+n}$ via the inclusion $O(m)\times O(n)\to O(m+n)$. A map of orthogonal spectra $E\to E'$ is a family of $O(n)$-equivariant based maps $E_n\to E'_n$ that are compatible with the structure maps. We use the notation $\SpO$ for the (topological) category of orthogonal spectra and equip this with the stable model structure established in \cite{MMSS}.
The sphere spectrum $\mS$ is the orthogonal spectrum with $n$th space $S^n$ and the obvious structure maps. It is proved in \cite{MMSS} that there is a smash product $\wedge$ of orthogonal spectra making $\SpO$ a closed symmetric monoidal category with $\mS$ as its monoidal unit.

The category $\SpO$ is related to the category of based spaces $\Top_*$ by a pair of adjoint functors 
$F_d\colon \Top_*\rightleftarrows \SpO:\! \Ev_d$ for each natural number $d$. The evaluation functor $\Ev_d$ is defined
by $\Ev_d(E) = E_d$, and its left adjoint $F_d$ takes a based space $K$ to the orthogonal spectrum with $n$th term
\[
F_d(K)_n=\cO(d\oplus(n-d),n)_+\wedge_{O(n-d)}K\wedge S^{n-d}
\] 
if $n\geq d$ and $F_d(K)_n=*$ otherwise (where $(-)_+$ indicates the addition of a disjoint base point). Here we think of $\mR^{n-d}$ as the orthogonal complement of 
$\mR^d$ included as the first $d$ coordinates in $\mR^n$.
The structure maps are defined by  
\[
F_d(K)_n\wedge S^1\to F_d(K)_{n+1},\quad [a,x,u,t]\mapsto[a\oplus \id_{\mR},x,(u,t)]
\]
for $a\in \cO(d\oplus(n-d),n)$, 
$x\in K$, $u\in S^{n-d}$, and $t\in S^1$. 
For $d=0$ this construction gives the orthogonal suspension spectrum $F_0(K)$ with $n$th space $K\wedge S^n$. 
It follows from \cite[Lemma 1.8]{MMSS} that the canonical family of maps
\[
\begin{gathered}
F_d(K)_m\wedge F_e(L)_n\to F_{d+e}(K\wedge L)_{m+n}, \\ ([a,x,u],[b,y,v])\mapsto 
[\left(a\oplus b\right) \left(\id_{\mR^d}\oplus\chi_{e,m-d}\oplus\id_{\mR^{n-e}}\right), (x,y),(u,v)] 
\end{gathered}
\] 
induces an isomorphism of orthogonal spectra
\begin{equation}\label{eq:F-monoidal-iso}
F_d(K)\wedge F_e(L)\cong F_{d+e}(K\wedge L).
\end{equation}

\subsection{The adjunction to $\cW$-spaces} In order to set up the
adjunction relating $\cW$-spaces to orthogonal spectra,
we first define a functor $\cW^{\op}\to \SpO$. On objects this is defined by mapping $(n_1,n_2)$ to $F_{n_1}(S^{n_2})$. Specifying the effect on morphism spaces  
\[
\cW((m_1,m_2),(n_1,n_2))\to \SpO(F_{n_1}(S^{n_2}),F_{m_1}(S^{m_2}))
\]
amounts by adjointness to specifying a family of based maps
\begin{equation}\label{eq:adjoint-Wop-SpO}
\cW((m_1,m_2),(n_1,n_2))_+\wedge S^{n_2}\to F_{m_1}(S^{m_2})_{n_1}.
\end{equation}
Given a natural number $m$ such that $m_1+m=n_1$ and $m_2+m=n_2$, there is an $O(m)$-equivariant 
map
\[
\big(\cO(m_1\oplus m,n_1)\times \cO(m_2\oplus m,n_2)\big)_+\wedge S^{n_2}
\to \cO(m_1\oplus m,n_1)_+\wedge S^{m_2}\wedge S^m
\]
defined by $(\sigma_1,\sigma_2,w)\mapsto (\sigma_1,\sigma_2^{-1}(w))$. We define the map~\eqref{eq:adjoint-Wop-SpO} to be the induced map of $O(m)$-orbit spaces. The following lemma is the orthogonal analogue of the corresponding result for symmetric spectra proved in \cite[Section~4.21]{Sagave-S_diagram}.
\begin{lemma}\label{lem:FWop-functor}
The structure maps defined above make
\[
F_{-}(S^{-})\colon \cW^{\op}\to \SpO,\quad (n_1,n_2)\mapsto F_{n_1}(S^{n_2})
\]
a strong symmetric monoidal functor. 
\end{lemma}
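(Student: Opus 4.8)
The plan is to verify directly that $F_{-}(S^{-})\colon \cW^{\op}\to\SpO$ is a functor and then equip it with the structure making it strong symmetric monoidal, leaning on the corresponding symmetric-spectrum argument in \cite[Section~4.21]{Sagave-S_diagram} and on the isomorphism~\eqref{eq:F-monoidal-iso}. First I would confirm functoriality: one must check that the adjoint structure maps in~\eqref{eq:adjoint-Wop-SpO} are well defined (independent of the choice of $m$ with $m_1+m=n_1$, $m_2+m=n_2$, and compatible with the $O(m)$-action so that they descend to the orbit space), that they respect the orthogonal-spectrum structure maps so as to actually define maps $F_{n_1}(S^{n_2})\to F_{m_1}(S^{m_2})$ in $\SpO$, and that they are compatible with composition in $\cW$ as spelled out in the definition of $\cW$. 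Since a morphism $[m,\sigma_1,\sigma_2]$ acts by the pair $(\sigma_1,\sigma_2^{-1}(-))$, functoriality on composites $[n,\tau_1,\tau_2]\circ[m,\sigma_1,\sigma_2]=[m\oplus n,\tau_1(\sigma_1\oplus\id),\tau_2(\sigma_2\oplus\id)]$ should follow from a bookkeeping computation tracking the extra $\mR^{n}$ and $\mR^{m}$ summands, using associativity of $\oplus$ and the defining identities of $F_{d}$.

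Next I would produce the monoidal structure. On objects $\cW$ is permutative with $(m_1,m_2)\oplus(n_1,n_2)=(m_1\oplus n_1,m_2\oplus n_2)$ and the unit is $(0,0)$; on the spectrum side $F_{-}(S^{-})$ sends $(0,0)$ to $F_0(S^0)=\mS$, so the unit constraint is the identity. For the monoidal multiplication I would take the isomorphism $F_{n_1}(S^{n_2})\wedge F_{n'_1}(S^{n'_2})\cong F_{n_1+n'_1}(S^{n_2}\wedge S^{n'_2})=F_{n_1+n'_1}(S^{n_2+n'_2})$ from~\eqref{eq:F-monoidal-iso}, which is the image under $F_{-}(S^{-})$ of $(m_1,m_2)\oplus(n_1,n_2)$. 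One then checks that this family of isomorphisms is natural in $\cW^{\op}\times\cW^{\op}$ — i.e.\ it intertwines the maps induced by $[m,\sigma_1,\sigma_2]\oplus[n,\tau_1,\tau_2]$ with the smash of the maps induced by $[m,\sigma_1,\sigma_2]$ and $[n,\tau_1,\tau_2]$ — which again reduces to unwinding the definition of $\oplus$ on morphisms of $\cW$ and comparing with the explicit formula for the map in~\eqref{eq:F-monoidal-iso}; the twist isomorphisms $\chi$ appearing in both formulas are what make these match. Associativity and unitality of these isomorphisms hold because they hold for the $F_d$ isomorphisms in $\SpO$, and symmetry — compatibility of $[\chi_{m_1,n_1},\chi_{m_2,n_2}]$ with the symmetry isomorphism of $\wedge$ on orthogonal spectra — is the last coherence to check, again by tracing through~\eqref{eq:F-monoidal-iso} and the symmetry of $\cW$.

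The main obstacle will be the naturality square for the monoidal multiplication together with the symmetry axiom: both require carefully matching two a priori different composites of inclusions, orthogonal-group elements, twist maps $\chi_{e,m-d}$, and the quotient by the various $O(m)$- and $O(n)$-actions, and it is easy to be off by a twist or an order-of-summands convention. The cleanest route is probably to observe that all the relevant maps are $O(\ast)$-equivariant continuous maps between free $O(\ast)$-CW complexes smashed with spheres, so it suffices to check the identities on representing points and then pass to orbits; once the point-level formulas are written out, equality is a matter of the associativity and naturality coherences already built into $\cV$, $\cO$, and the definition of $F_d$. Since the symmetric-spectrum version is carried out in detail in \cite[Section~4.21]{Sagave-S_diagram} and the only change is replacing the symmetric groups $\Sigma_n$ and the functors $F_d$ for symmetric spectra by the orthogonal groups $O(n)$ and the orthogonal $F_d$, I expect the verification to be formally identical, and I would present it by indicating the needed modifications rather than repeating the calculation.
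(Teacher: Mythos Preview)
Your proposal is correct and follows essentially the same approach as the paper's own proof: verify functoriality by direct inspection of the explicit description of $F_{n_1}(S^{n_2})$, and then supply the strong symmetric monoidal structure via the canonical isomorphism $\mS\cong F_0(S^0)$ and the isomorphisms $F_{m_1}(S^{m_2})\wedge F_{n_1}(S^{n_2})\cong F_{m_1+n_1}(S^{m_2+n_2})$ from~\eqref{eq:F-monoidal-iso}. The paper's proof is considerably terser---it simply asserts that ``one checks'' functoriality and names the monoidal structure maps---whereas you spell out in more detail which coherences need to be verified and how; but the underlying argument is the same, and your reference to the parallel symmetric-spectrum computation in \cite[Section~4.21]{Sagave-S_diagram} matches the paper's own remark that this lemma is the orthogonal analogue of that result.
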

\begin{proof}
Using the explicit description of the orthogonal spectra $F_{n_1}(S^{n_2})$, one checks that this construction indeed defines a continuous functor on $\cW^{\op}$. The symmetric monoidal structure is given by the canonical isomorphism $\mS\to F_0(S^0)$ and the natural isomorphisms
\[
F_{m_1}(S^{m_2})\wedge F_{n_1}(S^{n_2})\to F_{m_1+n_1}(S^{m_2+n_2})
\]
defined as in \eqref{eq:F-monoidal-iso}. 
\end{proof}

Now we apply a general principle for defining left adjoint functors out of diagram categories. For a $\cW$-space $X$, let $\mS^{\cW}[X]$ be the orthogonal spectrum defined as the coend 
\[
\mS^{\cW}[X]=\int^{(n_1,n_2)\in \cW}F_{n_1}(S^{n_2})\wedge X(n_1,n_2)_+
\]
of the indicated $\cW^{\op}\times \cW$-diagram of orthogonal spectra.  Notice in particular that 
$\mS^{\cW}[F^{\cW}_{(d_1,d_2)}(*)]$ is naturally isomorphic to $F_{d_1}(S^{d_2})$. (A more explicit description of $\mS^{\cW}[X]$ is given in Proposition~\ref{prop:graded-SW-formula}.) For an orthogonal spectrum~$E$, let $\Omega^{\cW}(E)$ be the $\cW$-space defined by
\[
\Omega^{\cW}(E)(n_1,n_2)=\SpO(F_{n_1}(S^{n_2}),E),
\]
where the right hand side is the morphism space in the topological category $\SpO$.
By adjointness, the latter space can be identified with $\Omega^{n_2}(E_{n_1})$.
The functoriality of the construction assigns to a morphism $[m,\sigma_1,\sigma_2]\colon 
(m_1,m_2)\to (n_1,n_2)$ in $\cW$ the map 
$\Omega^{m_2}(E_{m_1})\to \Omega^{n_2}(E_{n_1})$ that takes 
$f\colon S^{m_2}\to E_{m_1}$ to the composition
\[
S^{n_2}\xr{\sigma_2^{-1}}S^{m_2}\wedge S^{m}\xr{f\wedge \id}E_{m_1}\wedge S^m\to E_{m_1+m}\xr{\sigma_1}E_{n_1}.
\]
In the next proposition we consider the absolute and positive stable model structures on $\SpO$ introduced in~\cite{MMSS}.

\begin{proposition}\label{prop:S-Omega-adjunction}
The functor $\mS^{\cW}$ is strong symmetric monoidal and the functor $\Omega^{\cW}$ is lax symmetric monoidal. These functors define a Quillen adjunction
\[
\mS^{\cW}\colon \Top^{\cW}\rightleftarrows \SpO:\!\Omega^{\cW}
\]
with respect to the (positive) $\cW$-model structure on $\Top^{\cW}$ and the (positive) stable model structure on $\SpO$.
\end{proposition}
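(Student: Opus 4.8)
The plan is to verify the three assertions in turn — that the two functors are adjoint, that $\mS^{\cW}$ is strong and $\Omega^{\cW}$ lax symmetric monoidal, and that they form a Quillen adjunction — deducing the monoidal statements from Lemma~\ref{lem:FWop-functor} and the Quillen property from the effect of $\mS^{\cW}$ on free $\cW$-spaces. The adjunction $\mS^{\cW}\dashv\Omega^{\cW}$ is formal: computing $\SpO(\mS^{\cW}[X],E)$ as an end over $\cW$ and identifying $\SpO(F_{n_1}(S^{n_2})\wedge X(n_1,n_2)_+,E)$ with $\Top(X(n_1,n_2),\SpO(F_{n_1}(S^{n_2}),E))$ rewrites this end as $\Top^{\cW}(X,\Omega^{\cW}(E))$. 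For the symmetric monoidal structure I would use that $\mS^{\cW}$ is the prolongation to $\Top^{\cW}$ of the strong symmetric monoidal functor $F_{-}(S^{-})\colon\cW^{\op}\to\SpO$ of Lemma~\ref{lem:FWop-functor}, so that it acquires a strong symmetric monoidal structure just as for the analogous prolongation functors in \cite{MMSS} and \cite[\S4]{Sagave-S_diagram}. Concretely, $\mS^{\cW}$ sends the unit $U^{\cW}=F^{\cW}_{(0,0)}(*)$ to $F_0(S^0)\cong\mS$, and since both $\mS^{\cW}[X\boxtimes Y]$ and $\mS^{\cW}[X]\wedge\mS^{\cW}[Y]$ preserve colimits separately in $X$ and in $Y$, the comparison isomorphism is determined by its value on free $\cW$-spaces, where it follows from Lemma~\ref{lem:free-prod}, the identification $\mS^{\cW}[F^{\cW}_{(d_1,d_2)}(K)]\cong F_{d_1}(S^{d_2})\wedge K_+$, and the isomorphism~\eqref{eq:F-monoidal-iso}. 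Being the right adjoint of a strong symmetric monoidal functor between closed symmetric monoidal categories, $\Omega^{\cW}$ is then automatically lax symmetric monoidal.

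For the Quillen property I would proceed in two stages. First I would check that $\mS^{\cW}$ is left Quillen for the (positive) \emph{level} model structure on $\Top^{\cW}$ and the (positive) \emph{stable} model structure on $\SpO$. Here the identification $\mS^{\cW}[F^{\cW}_{(d_1,d_2)}(K)]\cong F_{d_1}(S^{d_2})\wedge K_+\cong F_{d_1}(S^{d_2}\wedge K_+)$, natural in $K$, shows that $\mS^{\cW}$ carries the generating cofibrations $F^{\cW}_{(d_1,d_2)}(i)$ and generating acyclic cofibrations $F^{\cW}_{(d_1,d_2)}(j)$ (with $d_1\geq 0$, resp.\ $d_1\geq 1$ in the positive case) to $F_{d_1}(S^{d_2}\wedge i_+)$ and $F_{d_1}(S^{d_2}\wedge j_+)$. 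Since $i\mapsto S^{d_2}\wedge i_+$ takes the generating (acyclic) cofibrations of $\Top$ to (acyclic) cofibrations of $\Top_*$, and $F_{d_1}\colon\Top_*\to\SpO$ is a left Quillen functor for the absolute stable model structure and, once $d_1\geq 1$, for the positive stable model structure \cite{MMSS}, these images are (acyclic) cofibrations in $\SpO$; as $\mS^{\cW}$ preserves colimits, this gives the claim.

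Second, I would pass from the level model structure to the $\cW$-model structure. The (positive) $\cW$-model structure is a left Bousfield localization of the corresponding (positive) level model structure: it has the same cofibrations, with the class of weak equivalences enlarged by formally inverting a set $S$ of $\cW$-equivalences between free $\cW$-spaces on points, namely the maps $F^{\cW}_{(n_1,n_2)}(*)\to F^{\cW}_{(m_1,m_2)}(*)$ induced by the morphisms $(m_1,m_2)\to(n_1,n_2)$ of $\cW$ (compare the construction of the analogous model structure in \cite[\S6]{Sagave-S_diagram}, on which Proposition~\ref{prop:W-model-str} rests). By the universal property of left Bousfield localization \cite[Theorem~3.3.20]{Hirschhorn-model}, together with the previous stage, it suffices to show that $\mS^{\cW}$ sends every map of $S$ to a stable equivalence. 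Since representable $\cW$-spaces are cofibrant, $\mS^{\cW}$ sends such a map to the morphism $F_{n_1}(S^{n_2})\to F_{m_1}(S^{m_2})$ given by the functor $F_{-}(S^{-})$; writing $n_i=m_i+m$ and using that the morphism equals $[0,\sigma_1,\sigma_2]\circ[m,\id,\id]$, this factors as an isomorphism of $F_{n_1}(S^{n_2})$ followed by the canonical map $F_{m_1+m}(S^{m_2+m})\cong F_{m_1+m}(S^{m})\wedge S^{m_2}\to F_{m_1}(S^0)\wedge S^{m_2}=F_{m_1}(S^{m_2})$, which up to isomorphism is the $m$-fold iterate of the fundamental map $\lambda\colon F_{d+1}(S^1)\to F_d(S^0)$ smashed with $S^{m_2}$. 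Iterates of $\lambda$ are stable equivalences between cofibrant orthogonal spectra \cite{MMSS}, and this is preserved by smashing with $S^{m_2}$; the positive case is identical, the relevant representables having $n_1,m_1\geq 1$. Equivalently, one could check that $\Omega^{\cW}$ carries $\Omega$-spectra to $\cW$-fibrant $\cW$-spaces, using that $\Omega^{\cW}(E)(n_1,n_2)=\Omega^{n_2}(E_{n_1})$ sends all morphisms of $\cW$ to weak equivalences when $E$ is an $\Omega$-spectrum.

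The formal parts — the adjunction, the monoidal structures, and the level-model-structure Quillen property — are routine. The one genuinely non-formal ingredient is the last step: recognizing the $\cW$-model structure as a Bousfield localization at maps between representables and verifying that $\mS^{\cW}$ inverts those maps. This reduces to the orthogonal-spectrum fact that the canonical maps $F_{d+m}(S^m)\to F_d(S^0)$ are stable equivalences — the analogue for $\SpO$ of the corresponding statement for symmetric spectra used in \cite{Sagave-S_diagram}, and part of the construction of the stable model structure on $\SpO$ in \cite{MMSS} — so the main obstacle is really one of organizing the reductions rather than of proving anything new.
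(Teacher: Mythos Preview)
Your proposal is correct, and for the adjunction and monoidality claims it follows essentially the same formal route as the paper (coend/end computations and reduction to the strong symmetric monoidal functor $F_{-}(S^{-})$ of Lemma~\ref{lem:FWop-functor}).

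For the Quillen property, however, you take a genuinely different and considerably longer path. The paper argues entirely on the right-adjoint side: it observes that acyclic fibrations in both model structures are level acyclic fibrations (hence obviously preserved by $\Omega^{\cW}$), and that the characterization of stable fibrations in \cite[Proposition~9.5]{MMSS} --- as level fibrations for which certain squares involving the loop/shift adjoints are homotopy cartesian --- translates under $\Omega^{\cW}(E)(n_1,n_2)=\Omega^{n_2}(E_{n_1})$ directly into the homotopy-cartesian condition~\eqref{eq:hty-cart-for-W-fib} defining $\cW$-fibrations. This is a two-line check. Your approach instead works on the left-adjoint side in two stages: first verifying that $\mS^{\cW}$ is left Quillen from the level model structure to the stable one, then invoking the Bousfield-localization description of the $\cW$-model structure and reducing to the stable equivalences $F_{m_1+m}(S^{m_2+m})\to F_{m_1}(S^{m_2})$. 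This is valid, and it has the minor advantage of making explicit why the passage from the level to the $\cW$-model structure costs nothing (namely, because $\mS^{\cW}$ already inverts the relevant representable maps); but it requires importing the localization description from \cite[\S6]{Sagave-S_diagram} and the universal property from \cite{Hirschhorn-model}, whereas the paper's right-adjoint argument needs only the definition of $\cW$-fibration and the cited characterization from \cite{MMSS}. You do mention the right-adjoint verification as an equivalent alternative in your final sentence; that alternative is precisely what the paper does, and it is the more economical choice here.
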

\begin{proof}
It is clear from the formal properties of the coend construction that $\mS^{\cW}$ is left adjoint to $\Omega^{\cW}$ (see e.g.\ \cite[Section~2]{Mandell_M-equivariant-orthogonal} for a general discussion of this phenomenon in a based topological context). For this to be a Quillen adjunction it suffices to show that $\Omega^{\cW}$ preserves fibrations and acyclic fibrations and this is clear from the characterization of stable fibrations in \cite[Proposition~9.5]{MMSS}. The statements about monoidality 
follow formally from the fact that the functor $F_{-}(S^{-})$ in Lemma~\ref{lem:FWop-functor} is strong symmetric monoidal, cf.\ \cite[Proposition~2.14]{Mandell_M-equivariant-orthogonal}.
\end{proof}

In more detail, the monoidal structure map $\mS^{\cW}[X]\wedge
\mS^{\cW}[Y]\to \mS^{\cW}[X\wedge Y]$ is obtained by passage to coends from the natural maps
\[
\begin{aligned}
&\big(F_{m_1}(S^{m_2})\wedge X(m_1,m_2)_+ \big)\wedge 
\big(F_{n_1}(S^{n_2})\wedge Y(n_1,n_2)_+ \big) \\
&\xr{\cong} 
F_{m_1}(S^{m_2})\wedge  F_{n_1}(S^{n_2})\wedge\big(X(m_1,m_2)\times Y(n_1,n_2)\big)_+ \\
&\to \big(F_{m_1+n_1}(S^{m_2+n_2}) \wedge \big(X\boxtimes Y(m_1+n_1,m_2+n_2)\big)_+
\end{aligned}
\]
whereas the monoidal structure of $\Omega^{\cW}$ is given by the natural maps 
\[
\Omega^{m_2}(E_{m_1})\wedge \Omega^{n_2}(F_{n_1})\to \Omega^{m_2+n_2}(E_{m_1}\wedge
F_{n_1})\to \Omega^{m_2+n_2}((E\wedge F)_{m_1+n_1})
\]
where the first arrow takes a pair of maps to their smash product.

The functor $\mathbb S^{\cW}$, being a left Quillen functor, takes $\cW$-equivalences between cofibrant $\cW$-spaces to stable equivalences. It is important for our applications that 
$\mathbb S^{\cW}$ is homotopically well-behaved on a larger class of $\cW$-spaces.

\begin{definition}\label{def:SW-good}
A $\cW$-space $X$ is said to be \emph{$\mathbb S^{\cW}$-good} if there exists a cofibrant $\cW$-space $X'$ and a $\cW$-equivalence $X'\to X$ such that $\mathbb S^{\cW}[X']\to \mathbb S^{\cW}[X]$ is a stable equivalence.
\end{definition}
It is clear from the definition that cofibrant $\cW$-spaces are
$\mathbb S^{\cW}$-good. The terminal $\cW$-space $*$ is an example of
a $\cW$-space that is not $\mathbb S^{\cW}$-good. 
Using that $\mathbb S^{\cW}$ is a left Quillen functor we see that if
$X$ is $\mathbb S^{\cW}$-good and $Y\to X$ is any $\cW$-equivalence
with $Y$ cofibrant, then the induced map $\mathbb S^{\cW}[Y]\to
\mathbb S^{\cW}[X]$ is a stable equivalence. This in turn has the
following consequence.
\begin{proposition}\label{prop:SW-good-equivalence}
The functor $\mathbb S^{\cW}$ takes $\cW$-equivalences between $\cW$-spaces that are 
$\mathbb S^{\cW}$-good to stable equivalences. \qed 
\end{proposition}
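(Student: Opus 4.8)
The proof is essentially a formal consequence of the preceding discussion, so the plan is to assemble the pieces rather than to do new work. Let me lay out the skeleton.

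\textbf{Setup and reduction.} Suppose $f\colon X\to Y$ is a $\cW$-equivalence between $\mS^{\cW}$-good $\cW$-spaces. First I would choose, using the definition of $\mS^{\cW}$-good, cofibrant $\cW$-spaces $X'$ and $Y'$ together with $\cW$-equivalences $p_X\colon X'\to X$ and $p_Y\colon Y'\to Y$ such that $\mS^{\cW}[p_X]$ and $\mS^{\cW}[p_Y]$ are stable equivalences. Since $Y'$ is cofibrant and $Y$ receives a $\cW$-equivalence from the cofibrant object $Y'$, the remark immediately preceding the proposition already tells us this; but I want both $X'$ and $Y'$ explicitly in hand.

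\textbf{Lifting $f$ to the cofibrant replacements.} The main step is to produce a map $f'\colon X'\to Y'$ making the evident square commute up to $\cW$-equivalence, i.e.\ with $p_Y\circ f'$ $\cW$-equivalent to $f\circ p_X$. I would do this by standard model-category nonsense in the $\cW$-model structure of Proposition~\ref{prop:W-model-str}: factor $p_Y$ as an acyclic cofibration followed by a $\cW$-fibration $\tilde p_Y\colon \tilde Y\to Y$, observe that $\tilde Y$ is still cofibrant (acyclic cofibrations preserve cofibrancy of the source up to the usual argument, or simply replace $Y'$ by $\tilde Y$ from the start), and then lift $f\circ p_X\colon X'\to Y$ against $\tilde p_Y$ using that $X'$ is cofibrant and $\tilde p_Y$ is an acyclic fibration --- wait, $\tilde p_Y$ is only a fibration, not acyclic, so instead I use the lifting in the homotopy category: maps out of a cofibrant object into $Y$ are represented by maps into any fibrant-cofibrant replacement, and $f\circ p_X$ lifts through $\tilde p_Y$ up to homotopy because $\tilde p_Y$ is a $\cW$-equivalence. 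In any case the upshot is a map $f'\colon X'\to Y'$ of cofibrant $\cW$-spaces which is a $\cW$-equivalence (by the two-out-of-three property, since $p_X$, $p_Y$, $f$ all are) and which fits into a square commuting up to $\cW$-equivalence with $f$.

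\textbf{Applying $\mS^{\cW}$ and concluding.} Now apply $\mS^{\cW}$ throughout. Since $f'$ is a $\cW$-equivalence between cofibrant $\cW$-spaces, $\mS^{\cW}$ being a left Quillen functor (Proposition~\ref{prop:S-Omega-adjunction}) sends it to a stable equivalence $\mS^{\cW}[f']\colon \mS^{\cW}[X']\to \mS^{\cW}[Y']$. By hypothesis $\mS^{\cW}[p_X]$ and $\mS^{\cW}[p_Y]$ are stable equivalences. The square of orthogonal spectra obtained by applying $\mS^{\cW}$ commutes up to stable homotopy (the $\cW$-equivalence witnessing commutativity of the space-level square is sent by $\mS^{\cW}$ to a stable equivalence, since one may arrange the witnessing homotopy through cofibrant objects, or simply note it suffices to have the square commute in the stable homotopy category). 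A diagram chase in the stable homotopy category --- three of the four edges are isomorphisms --- forces $\mS^{\cW}[f]$ to be a stable equivalence as well.

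\textbf{Main obstacle.} The only genuinely delicate point is the lifting step: ensuring that $f$ can be covered by a map $f'$ of \emph{cofibrant} $\cW$-spaces in a way compatible with the chosen cofibrant replacements witnessing $\mS^{\cW}$-goodness. One must be slightly careful because $\mS^{\cW}$-goodness refers to a \emph{specific} cofibrant replacement, not an arbitrary one; the resolution is the observation (already noted in the text before the proposition) that if $X$ is $\mS^{\cW}$-good then \emph{every} $\cW$-equivalence $Y\to X$ from a cofibrant $Y$ has $\mS^{\cW}[Y]\to\mS^{\cW}[X]$ a stable equivalence, which frees us to use whatever cofibrant replacement the lifting argument naturally produces. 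Once this is invoked the rest is routine two-out-of-three bookkeeping, which is presumably why the authors marked the statement \qed.
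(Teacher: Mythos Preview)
Your argument is correct in spirit but takes a needlessly circuitous route compared to the paper's intended one-line proof. The paper's approach avoids the lifting step entirely: given a $\cW$-equivalence $f\colon X\to Y$ between $\mS^{\cW}$-good objects, choose a single cofibrant replacement $p_X\colon X'\to X$ and consider the \emph{composite} $f\circ p_X\colon X'\to Y$. This is a $\cW$-equivalence from a cofibrant object into the $\mS^{\cW}$-good $Y$, so by the observation you yourself quote (any $\cW$-equivalence from a cofibrant source into an $\mS^{\cW}$-good target becomes a stable equivalence under $\mS^{\cW}$), both $\mS^{\cW}[p_X]$ and $\mS^{\cW}[f\circ p_X]$ are stable equivalences. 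Two-out-of-three finishes it. No cofibrant replacement of $Y$, no lift $f'$, no homotopy-commutative squares.

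A small technical remark on your version: when you factor the $\cW$-equivalence $p_Y$ as an acyclic cofibration followed by a fibration $\tilde p_Y$, two-out-of-three forces $\tilde p_Y$ to be a $\cW$-equivalence as well, hence an \emph{acyclic} fibration. So the lift of $f\circ p_X$ against $\tilde p_Y$ exists strictly, and your detour through the homotopy category is unnecessary. This would also eliminate the vagueness about the square commuting ``up to $\cW$-equivalence'' and then ``up to stable homotopy''. But again, all of this machinery can simply be dropped in favor of composing through to $Y$.
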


In Appendix~\ref{app-sec:SW-goodness} we examine the notion of $\mS^{\cW}$-goodness in more detail and we formulate some convenient criteria which ensure that a $\cW$-space is $\mS^{\cW}$-good. 

\begin{remark}
The category $\cW$ was first considered in connection with orthogonal spectra by
Kro~\cite{Kro-orthogonal} who used $\cW$ to construct a convenient
fibrant replacement functor on $\SpO$.
\end{remark}

\subsection{Graded orthogonal spectra}\label{subsec:graded-orthogonal}
By a graded orthogonal spectrum  we understand a family of orthogonal spectra $E=\{E_{\{d\}}\colon d\in \mathbb Z\}$ and we write $\Grad_{\mathbb Z}\SpO$ for the category of graded orthogonal spectra in which a morphism $f\colon D\to E$ is a family of maps of orthogonal spectra $f_{\{d\}}\colon D_{\{d\}}\to E_{\{d\}}$ indexed by $d\in \mathbb Z$. The obvious ``graded smash product'' $D\wedge E$ with
\[
(D\wedge E)_{\{d\}}=\bigvee_{i+j=d} D_{\{i\}}\wedge E_{\{j\}}
\]
makes $\Grad_{\mathbb Z}\SpO$ a symmetric monoidal category with monoidal unit the graded orthogonal spectrum which is the sphere spectrum $\mathbb S$ in degree $0$ and the terminal spectrum $*$ in all other degrees. We use the term \emph{graded orthogonal ring spectrum} for a monoid in $\Grad_{\mathbb Z}\SpO$. If $R$ is a graded orthogonal ring spectrum, then $R_{\{0\}}$ is an ordinary orthogonal ring spectrum and each $R_{\{d\}}$ is an $R_{\{0\}}$-module. 
There are adjoint functors 
 $t\colon \Grad_{\mathbb Z}\SpO\rightleftarrows \SpO:\! c$ defined by 
 $t(E)=\bigvee_{d\in \mathbb Z}E_{\{d\}}$ and $c(Z)_{\{d\}}=Z$. The functor $t$ is strong symmetric monoidal and $c$ is lax symmetric monoidal.  
 
Now let us view $F_{n_1}(S^{n_2})$ as a graded orthogonal spectrum concentrated in degree 
 $n_2-n_1$. The functor $F_{-}(S^{-})$ from Lemma~\ref{lem:FWop-functor} then factors through a  strong symmetric monoidal functor 
$F_{-}(S^{-})\colon\cW^{\op}\to \Grad_{\mathbb Z}\SpO$ such that the composition with $t$ is the functor considered in the lemma. From this we obtain the horizontal 
adjunction $(\mathbb S^{\cW}_{\mathbb Z},\Omega^{\cW}_{\mathbb Z})$ in the commutative diagram of adjunctions
\[
\xymatrix{
\Top^{\cW}\ar@<.5ex>[rr]^{\mathbb S^{\cW}_{\mathbb Z}}  
\ar@<.5ex>[dr]^(.6){\!\mathbb S^{\cW}}& & \Grad_{\mathbb Z}\SpO
\ar@<.5ex>[ll]^{\Omega^{\cW}_{\mathbb Z}}\ar@<-.5ex>[dl]_t\\
& \SpO.\ar@<.5ex>[ul]^{\Omega^{\cW}}\ar@<-.5ex>[ur]_c &
}
\]
For a $\cW$-space $X$, we have $\mathbb S^{\cW}_{\mathbb Z}[X]_{\{d\}}=\mathbb S^{\cW}[X_{\{d\}}]$ where $X_{\{d\}}$ denotes the $\cW$-space which agrees with $X$ on the $d$th component 
$\cW_{\{d\}}$  and is the empty set otherwise. We note that $\mathbb S^{\cW}_{\mathbb Z}$ is strong symmetric monoidal and $\Omega^{\cW}_{\mathbb Z}$ is lax symmetric monoidal.  

\begin{proposition}\label{prop:graded-SW-formula}
For a $\cW$-space $X$ and $d\in \mZ$, we have 
\begin{equation}\label{eq:graded-formula}
\mathbb S^{\cW}[X_{\{d\}}]_n\iso 
\begin{cases}
X(n,d+n)_+\wedge_{O(d+n)}S^{d+n} & \text{if $d+n\geq 0$},\\
* & \text{if $d+n<0$}.
\end{cases}
\end{equation}
\end{proposition}
Here the notation indicates the coequalizer of the left $O(d+n)$-action on $S^{d+n}$ and the right 
$O(d+n)$-action on $X(n,d+n)$ obtained by letting $a\in O(d+n)$ act as $a^{-1}$ in the second variable. (This is the same as the orbit space of the diagonal left action). For $0<-d<n$ we again think of 
$\mR^{d+n}$ as the orthogonal complement of $\mR^{-d}$ included as the first $-d$ coordinates in $\mR^n$.

\begin{proof}
We first observe that the right hand side of  \eqref{eq:graded-formula} actually defines an orthogonal spectrum. The orthogonal group $O(n)$ acts via the left action on $X(n,d+n)$ in the first variable and the spectrum structure map
\[
X(n,d+n)_+\wedge_{O(d+n)}S^{d+n}\wedge S^1\to X(n+1,d+n+1)_+\wedge_{O(d+n+1)}S^{d+n+1}
\]
is induced by the morphism
\[
[1,\id_{\mR^{n+1}},\id_{\mR^{d+n+1}}]\colon (n,d+n)\to (n+1,d+n+1)
\]
and the canonical identification $S^{d+n}\wedge S^1=S^{d+n+1}$. For each $(n_1,n_2)$ in 
$\cW_{\{d\}}$ this spectrum receives a spectrum map from 
$F_{n_1}(S^{n_2})\wedge X(n_1,n_2)_+ $ induced by the obvious map of spaces
\[
S^{n_2}\wedge X(n_1,n_2)_+\to X(n_1,d+n_1)_+\wedge_{O(d+n_1)}S^{d+n_1}
\]
where $n_2=d+n_1$. By the universal property of the coend, these spectrum maps assemble to a map from $\mS^{\cW}[X_{\{d\}}]$ which gives the isomorphism in the proposition. The inverse is defined in spectrum degree $n$ by factoring the composition 
\[
X(n,d+n)_+\wedge S^{d+n}\to F_n(S^{d+n})_n\wedge X(n,d+n)_+\to \mS^{\cW}[X_{\{d\}}]_n
\]
over the coequalizer by the $O(d+n)$-action.
\end{proof}

The right adjoint $\Omega^{\cW}_{\mZ}$ takes a graded orthogonal spectrum $E$ to the $\cW$-space 
$\Omega^{\cW}_{\mZ}(E)$ whose restriction to $\cW_{\{d\}}$ equals the restriction of $\Omega^{\cW}(E_{\{d\}})$ to $\cW_{\{d\}}$.   

Using that the adjoint functors $\mathbb S^{\cW}_{\mathbb Z}$ and $\Omega^{\cW}_{\mZ}$ are (lax) symmetric monoidal, we get an induced adjunction between the  corresponding
categories of (commutative) monoids. In particular, a $\cW$-space monoid $M$ gives rise to the graded orthogonal ring spectrum $\mathbb S^{\cW}_{\mathbb Z}[M]$ with graded multiplication
\[
\mathbb S^{\cW}[M_{\{d\}}]\wedge \mathbb S^{\cW}[M_{\{e\}}]\xr{\cong} 
\mathbb S^{\cW}[M_{\{d\}}\boxtimes M_{\{e\}}]\to \mathbb S^{\cW}[M_{\{d+e\}}].
\]
With the explicit description in Proposition~\ref{prop:graded-SW-formula}, this multiplication takes the form
\[
\begin{aligned}
&\big(M(m,d+m)_+\wedge_{O(d+m)}S^{d+m}\big)\wedge\big(M(n,e+n)_+\wedge_{O(e+n)}S^{e+n}\big)\\
&\to M(m+n,d+m+e+n)_+\wedge_{O(d+m+e+n)}S^{d+m+e+n}\\
&=M(m+n,d+e+m+n)_+\wedge_{O(d+e+m+n)}S^{d+e+m+n}
\end{aligned}
\]
where the first map is given by the monoid structure of $M$ and the canonical identification of $S^{d+m}\wedge S^{e+n}$ with $S^{d+m+e+n}$. The last equality indicates that the action of $O(d+m+e+n)$ renders further permutations of the coordinates irrelevant.

\subsection{Grouplike commutative $\cW$-space monoids}\label{sec:grouplike}
We say that a commutative $\cW$-space monoid $M$ is \emph{grouplike} if the underlying $E_{\infty}$ space $M_{h\cW}$ is grouplike in the usual sense, that is, if $\pi_0(M_{h\cW})$ is a group. This can also be expressed in terms of the monoid unit $*\to M(0,0)$ and the map of $\cW\times\cW$-spaces \[
\mu\colon M(m_1,m_2)\times M(n_1,n_2)\to M((m_1,m_2)\oplus(n_1,n_2))
\] 
defining the multiplication. 

\begin{lemma}\label{lem:grouplike-criterion}
A commutative $\cW$-space monoid $M$ is grouplike if and only if for each element $x\in M(m_1,m_2)$ there exists an element $y\in M(n_1,n_2)$ such that $n_2-n_1=m_1-m_2$ and $\mu(x,y)$ belongs to the same path component as the image of the monoid unit $*\to M(0,0)$ under the canonical morphism
\[
[m_1+n_1,\id_{\mR^{m_1+n_1}},\id_{\mR^{m_2+n_2}}]\colon (0,0)\to (m_1+n_1,m_2+n_2).
\]
\end{lemma}
\begin{proof}
Let $\pi_0\cW$ be the category with the same objects as $\cW$ and morphisms the path components of the morphism spaces in $\cW$. Then it follows from the definition of $M_{h\cW}$ as the realization of a simplicial space that $\pi_0(M_{h\cW})$ can be identified with the colimit of the $\pi_0\cW$-diagram defined by the path components $\pi_0(M(n_1,n_2))$. Restricting to the $d$th component $\cW_{\{d\}}$, we thus get that 
\[
\pi_0(M_{h\cW_{\{d\}}})\cong \colim_{n} \pi_0(M(n,d+n))/\pi_0(O(n)\times O(d+n))
\]
where the colimit is over the ordered set of natural numbers $n$ such that $d+n\geq 0$. This easily gives the statement in the lemma.
\end{proof}

\begin{example}[Graded $\cW$-space units]
Let $R$ be a commutative orthogonal ring spectrum and assume for simplicity that $R$ is positive fibrant. We can then define the graded $\cW$-space units $\GL_1^{\cW}(R)$ by mimicking the analogous construction for symmetric spectra in \cite[Section~4]{Sagave-S_diagram}: Every path component in the space $\Omega^{\cW}(R)(n_1,n_2)$ represents an element in the graded ring of homotopy groups 
$\pi_*(R)$ (see below), and we define $\GL_1^{\cW}(R)(n_1,n_2)$ to be the union of the path components that represent graded units. This defines a sub $\cW$-space $\GL_1^{\cW}(R)$ of $\Omega^{\cW}(R)$, and it is clear that $\GL_1^{\cW}(R)$ inherits the structure of a commutative $\cW$-space monoid which is grouplike by Lemma~\ref{lem:grouplike-criterion}.
\end{example}

In order to  analyze orthogonal ring spectra obtained from grouplike commutative $\cW$-space monoids, we review some facts about the homotopy groups of orthogonal spectra. Recall  that the $n$th homotopy group of an orthogonal spectrum $E$ is defined by $\pi_n(E)=\colim_k\pi_{n+k}(E_k)$, where the colimit is over the  homomorphisms
\[
[S^{n+k},E_k]_*\xr{(-)\wedge \id_{S^1}} [S^{n+k}\wedge S^1,E_k\wedge S^1]_*\to [S^{n+k+1},E_{k+1}]_*
\]
specified by the spectrum structure maps. Here we require that $n+k\geq 2$ in order for this to be a sequence of abelian groups. With this definition, each class in $\pi_{n_2}(E_{n_1})$ defines an element in $\pi_{n_2-n_1}(E)$ by passage to the colimit. We write $\pi_*(E)$ for the resulting $\mZ$-graded abelian group.

There is an exterior pairing of homotopy groups which for orthogonal spectra $E$ and $E'$ takes the form
\[
\pi_*(E)\otimes \pi_*(E')\to \pi_*(E\wedge E'),\quad \alpha\otimes \beta\mapsto \alpha\cdot \beta
\]
where for representatives $[f]\in \pi_{m_2}(E_{m_1})$ and $[g]\in\pi_{n_2}(E'_{n_1})$, we set
\[
[f]\cdot[g]=(-1)^{m_1(n_2-n_1)}[S^{m_2}\wedge S^{n_2}\xr{f\wedge g} E_{m_1}\wedge E'_{n_1}
\to (E\wedge E')_{m_1+n_1}],
\]
see \cite{Schwede-SymSp} and \cite[Section~4]{Sagave-S_diagram}. In non-negative degrees the product can be realized on representatives without the sign by writing $f\colon S^{m+k}\to E_k$ and 
${g\colon S^{n+l}\to E'_l}$, and then precomposing with the map
$S^{m+n+k+l}\to S^{m+k+n+l}$ induced by the obvious block permutation. The formal properties of the exterior product can be summarized as follows.

\begin{proposition}
The canonical homomorphism $\mZ\cong \pi_0(\mS)\to \pi_*(\mS)$ and the exterior product give $\pi_*(-)$ the structure of a lax symmetric monoidal functor from the category of orthogonal spectra equipped with the smash product to the category of graded abelian groups equipped with the graded tensor product and the usual sign convention for the symmetric monoidal structure. \qed
\end{proposition}

It follows that if $R$ is a graded orthogonal ring spectrum with graded levels $R_{\{n\}}$, then~$\pi_*(R)$ is a bigraded ring with $(m,n)$th term $\pi_m(R_{\{n\}})$. If $R$ is commutative, then~$\pi_*(R)$ is graded commutative in the sense that $\alpha\cdot\beta=(-1)^{|\alpha||\beta|}  \beta\cdot \alpha$, where $|\alpha|$ and $|\beta|$ are the internal degrees in the respective graded abelian groups $\pi_*(R_{\{n\}})$. 

We also recall that if $n=n_2-n_1$, then the free orthogonal spectrum $F_{n_1}(S^{n_2})$ represents the suspension $\Sigma^n\mS$ as an object in the stable homotopy category:
\[
F_{n_1}(S^{n_2})\xr{\sim} \Omega^{n_1}(S^{n_1}\wedge F_{n_1}(S^{n_2}))
\xr{\sim}  \Omega^{n_1}(S^{n_2}\wedge F_{n_1}(S^{n_1}))\xr{\sim}
\Omega^{n_1}(S^{n_2}\wedge \mathbb S),
\]
where the last map is induced by the canonical stable equivalence $F_{n_1}(S^{n_1})\to \mathbb S$, cf.\ 
\cite[Lemma~8.6]{MMSS}. Let $[n_1,n_2]\in \pi_n(F_{n_1}(S^{n_2}))$ be the generator represented by the obvious map $S^{n_2}\to O(n_1)_+\wedge S^{n_2}$ taking $x$ to $(\id,x)$. Then the external pairing with $[n_1,n_2]$ defines an isomorphism of graded abelian groups
\[
\Sigma^n\pi_*(X)\to \pi_*(F_{n_1}(S^{n_2})\wedge X), \quad \alpha\mapsto [n_1,n_2]\cdot \alpha
\]
for every orthogonal spectrum $X$. Here and in the following, the notation $\Sigma^n\pi_*(X)$ indicates the graded abelian group with $k$th term $\Sigma^n\pi_k(X)=\pi_{k-n}(X)$. 

Now let $M$ be a grouplike commutative $\cW$-space monoid which we keep fixed for the rest of this section. We let $d\geq 0$ denote the \emph{periodicity degree} of $M$, that is, $d$ generates the image of the group homomorphism $\pi_0(M_{h\cW})\to\pi_0(B\cW)\cong \mZ$ induced by the projection of $M$ onto the terminal $\cW$-space. An element $u\in M(n_1,n_2)$ with $n=n_2-n_1$ gives rise to a map of $\cW$-spaces $\bar u\colon F_{(n_1,n_2)}^{\cW}(*)\to M_{\{n\}}$ and applying $\mS^{\cW}$ we get a map of orthogonal spectra $F_{n_1}(S^{n_2})=\mS^{\cW}[F_{(n_1,n_2)}^{\cW}(*)]\to \mS^{\cW}[M_{\{n\}}]$. This in turn amounts to a map of spaces $S^{n_2}\to \mS^{\cW}[M_{\{n\}}]_{n_1}$ and hence determines an element $[u]\in \pi_n(\mS^{\cW}[M_{\{n\}}])$. By the explicit description in 
Proposition~\ref{prop:graded-SW-formula}, this element is represented by the map $S^{n_2}\to M(n_1,n_2)_+\wedge S^{n_2}$ taking $x$ to $(u,x)$.  

\begin{lemma}\label{lem:units-in-SWM}
For a grouplike commutative $\cW$-space monoid $M$, every element $u\in M(n_1,n_2)$ with $n=n_2-n_1$ determines a unit $[u]\in \pi_n(\mS^{\cW}[M_{\{n\}}])$ in the bigraded ring of homotopy groups $\pi_*(\mS^{\cW}[M])$. 
\end{lemma}
\begin{proof}
It follows from Lemma~\ref{lem:grouplike-criterion} that there exists an element $v\in M(m_1,m_2)$ such that $m_2-m_1=-n$ and the equalities $[u][v]=1$ and $[v][u]=1$ hold in $\pi_*(\mS^{\cW}[M])$. This uses the explicit description of the multiplication and the fact that if a commutative orthogonal ring spectrum has a unit in odd degree, then the graded ring of homotopy groups is an $\mF_2$-algebra.
\end{proof}
In subsequent sections we will be interested in periodic ring spectra arising from the following construction.
\begin{definition}\label{def:RP}
Let $R$ be an (ungraded) commutative orthogonal ring spectrum and let $\mS^{\cW}[M_{\{0\}}]\to R$ be a map of orthogonal ring
  spectra. Then we let $RP$ be the commutative graded orthogonal ring
  spectrum with $n$th graded level
  \[
  RP_{\{n\}}=\mS^{\cW}[M_{\{n\}}]\wedge _{\mS^{\cW}[M_{\{0\}}]}R
  \]
and multiplication inherited from the multiplicative structures of $\mS^{\cW}[M]$ and $R$.
\end{definition}

By construction, $RP$ depends on the grouplike commutative $\cW$-space monoid $M$ even though this is implicit in the notation. The notation is motivated by Proposition~\ref{prop:SW-M-equivalence} below which shows that  
$t(RP)$ is stably equivalent to $\bigvee_{n\in d\mZ}\Sigma^nR$, where $d$ is the periodicity degree introduced above. 
Given an element $u\in M(n_1,n_2)$ with $n=n_2-n_1$, we also use the notation $[u]\in \pi_n(RP_{\{n\}})$ to denote the unit in $\pi_*(RP)$ represented by the composition 
\[
F_{n_1}(S^{n_2})\to \mS^{\cW}[M_{\{n\}}]\to RP_{\{n\}}. 
\]

\begin{proposition}\label{prop:SW-M-equivalence}
Every element $u\in M(n_1,n_2)$ gives rise to a stable equivalence of $R$-modules $\Sigma^nR\simeq F_{n_1}(S^{n_2})\wedge R\xr{\sim} RP_{\{n\}}$. 
\end{proposition}
\begin{proof}
The stable equivalence is defined as the composition 
\[
F_{n_1}(S^{n_2})\wedge R\to RP_{\{n\}}\wedge R\to RP_{\{n\}}
\]
where the last map is the multiplication in $RP$. On the level of homotopy groups this induces multiplication by $[u]$,
\[
\Sigma^n\pi_*(R)\xr{\cong} \pi_*(F_{n_1}(S^{n_2})\wedge R)\to \pi_*(RP_{\{n\}}),\quad \alpha\mapsto [u]\cdot \alpha,
\]
and the result follows because $[u]$ is a unit. 
\end{proof}

Notice, that for $R=\mS^{\cW}[M_{\{0\}}]$ the proposition gives  a stable equivalence 
\[
\Sigma^n\mS^{\cW}[M_{\{0\}}]\simeq F_{n_1}(S^{n_2})\wedge \mS^{\cW}[M_{\{0\}}]\xr{\sim} \mS^{\cW}[M_{\{n\}}].
\]

\section{Graded Grassmannians and periodic cobordism spectra}\label{sec:graded-Grass} In this section we show how the classical Stiefel manifolds fit in the setting of \mbox{$\cW$-spaces}. Using this, we introduce the oriented and unoriented graded Grassmannians in our context, and we show how the periodic real cobordism spectra $\MOP$ and $\MSOPeven$ arise from commutative $\cW$-space monoids.

\subsection{The Stiefel $\cW$-space $V$}\label{subsec:V-W-space} In the following $\oplus^{\infty}\mR^n$ denotes the infinite direct sum $\oplus_{i=1}^{\infty}\mR^n$, and we write $\cV(\mR^m,\oplus^{\infty}\mR^n)$ for the space of linear isometries from $\mR^m$ to $\oplus^{\infty}\mR^n$. (This is a slight abuse of notation since $\oplus^{\infty}\mR^n$ is not an object of the category $\cV$.) We shall define a ``Stiefel'' $\cW$-space $V$ by
\[ V\colon \cW\to\Top,\quad V(n_1,n_2)=\cV(\mR^{n_2},\oplus^{\infty}\mR^{n_1}).
\] Thus, $V(n_1,n_2)$ can be identified with the space $V_{n_2}(\oplus^{\infty}\mR^{n_1})$ of orthogonal $n_2$-frames in $\oplus^{\infty}\mR^{n_1}$, topologized as the colimit of the finite dimensional Stiefel manifolds $V_{n_2}(\oplus^k\mR^{n_1})$. This is the empty set if and only if $n_1=0$ and $n_2>0$, and is otherwise contractible. In order to specify the action on morphisms, we fix a choice of isometries $i_m\colon \mR^m\to \oplus^{\infty}\mR^m$ for $m\geq 0$ by including $\mR^m$ as the first summand.  Given a morphism $[m,\sigma_1,\sigma_2]\colon (m_1,m_2)\to (n_1,n_2)$ in $\cW$, the induced map
\[ [m,\sigma_1,\sigma_2]_*\colon V(m_1,m_2)\to V(n_1,n_2)
\] then takes an element $\bld v$ in the domain to the isometry determined by the commutativity of the diagram
\[ \xymatrix@-.5pc{ \mR^{n_2} \ar[rr]^-{[m,\sigma_1,\sigma_2]_*(\bld v) }& & \oplus^{\infty}\mR^{n_1}\\ \mR^{m_2}\oplus\mR^m \ar[r]^-{\bld v\oplus i_m} \ar[u]^{\sigma_2}& (\oplus^{\infty}\mR^{m_1})\oplus(\oplus^{\infty}\mR^m) \ar[r]^-{\cong} &\oplus^{\infty}(\mR^{m_1}\oplus\mR^m) \ar[u]_{\oplus^{\infty}\sigma_1}.  }
\] Here the isomorphism on the bottom right is the obvious order preserving permutation of the coordinates.  

\begin{lemma}\label{lem:VhW-BW-equivalence}
The projection of $V$ onto the terminal $\cW$-space is a positive acyclic $\cW$-fibration and thus induces a weak homotopy equivalence $V_{h\cW}\to B\cW$.\qed
\end{lemma}

Next we consider the natural map of ($\cW\times\cW$)-spaces
\[ \mu\colon V(m_1,m_2)\times V(n_1,n_2)\to V((m_1,m_2)\oplus (n_1,n_2))
\] sending a pair $\bld v\in V(m_1,m_2)$ and $\bld w\in V(m_1,m_2)$ to the isometry
\[ \mR^{m_2}\oplus \mR^{n_2} \xr{\bld v\oplus\bld w} (\oplus^{\infty}\mR^{m_1}) \oplus(\oplus^{\infty}\mR^{n_1}) \xr{\cong} \oplus^{\infty}(\mR^{m_1}\oplus\mR^{n_1})
\] where we use the same order preserving permutation of the coordinates.  By the universal property of the $\boxtimes$-product, this in turn gives rise to a map of $\cW$-spaces $\mu\colon V\boxtimes V\to V$. Checking from the definitions, we get the next result.

\begin{proposition} The multiplication $\mu\colon V\boxtimes V\to V$ makes $V$ a grouplike commutative $\cW$-space monoid with unit the unique map of $\cW$-spaces $U^{\cW}\to V$. \qed
\end{proposition}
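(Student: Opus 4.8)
The plan is to verify the monoid axioms directly from the coordinate descriptions of $\mu$ and the unit, exploiting that all the relevant structure maps are induced by order-preserving permutations of the summands in $\oplus^\infty\mR^{n}$. First I would check that $\mu$ really is a well-defined map of $\cW\times\cW$-spaces: one must confirm that for morphisms $[m,\sigma_1,\sigma_2]$ and $[n,\tau_1,\tau_2]$ in $\cW$, the two ways of going around the naturality square agree. This is a diagram chase, the point being that the isomorphism $(\oplus^\infty\mR^{m_1})\oplus(\oplus^\infty\mR^{n_1})\xr{\cong}\oplus^\infty(\mR^{m_1}\oplus\mR^{n_1})$ intertwines the diagonal extension maps $i_m$, $i_n$, $i_{m\oplus n}$ used in defining the $\cW$-action. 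Given naturality, the universal property of the $\boxtimes$-product (as recalled in Section~\ref{subset:boxtimes-product}) produces the map $\mu\colon V\boxtimes V\to V$.

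Next I would verify associativity, commutativity, and unitality. Associativity: both $\mu\circ(\mu\boxtimes\id)$ and $\mu\circ(\id\boxtimes\mu)$ send a triple $(\bld v,\bld w,\bld x)$ to the isometry $\bld v\oplus\bld w\oplus\bld x$ followed by the (unique) order-preserving shuffle $(\oplus^\infty\mR^{m_1})\oplus(\oplus^\infty\mR^{n_1})\oplus(\oplus^\infty\mR^{p_1})\xr{\cong}\oplus^\infty(\mR^{m_1}\oplus\mR^{n_1}\oplus\mR^{p_1})$, so they coincide on the nose. Commutativity: $\mu\circ\chi$ composes $\bld w\oplus\bld v$ with an order-preserving shuffle, and one checks this equals $\mu$ conjugated by the symmetry isomorphisms $\chi_{m_2,n_2}$ on the source and $[\chi_{m_1,n_1},\chi_{m_2,n_2}]$ on the target — here the key identity is that the coordinate-wise symmetry of $\cW$ corresponds under the shuffle to the block transposition on $\oplus^\infty(-)$. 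Unitality: the unit map $U^{\cW}=\cW(0,-)\to V$ is adjoint to the point of $V(0,0)=\cV(\mR^0,\oplus^\infty\mR^0)$ given by the empty isometry, and feeding this into $\mu$ leaves the other factor unchanged because adding a zero-dimensional summand and reshuffling does nothing. Finally, grouplikeness follows immediately from Lemma~\ref{lem:grouplike-criterion}: given $\bld v\in V(m_1,m_2)$, since $V_{h\cW}\to B\cW$ is a $\cW$-equivalence and $\pi_0(B\cW)\cong\pi_0(BO\times\mZ)\cong\mZ$ is a group, one exhibits an inverse; concretely one may take $\bld w\in V(m_2,m_1)$ to be the canonical frame and observe $\mu(\bld v,\bld w)$ lies in the component of the image of the unit under $[m_1+m_2,\id,\id]\colon(0,0)\to(m_1+m_2,m_1+m_2)$, using that $V(m_1+m_2,m_1+m_2)$ is connected (it is nonempty since $m_1+m_2\geq 0$, and is contractible unless the first coordinate is $0$, which forces the second to be $0$ as well).

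The main obstacle I expect is purely bookkeeping rather than conceptual: keeping track of the various order-preserving permutation isomorphisms of $\oplus^\infty(\mR^{m_1}\oplus\cdots)$ and checking that they are compatible with the fixed inclusions $i_m\colon\mR^m\to\oplus^\infty\mR^m$ and with the $O(m)$-quotients defining the morphism spaces of $\cW$. In particular, the naturality check for $\mu$ and the commutativity check both hinge on the assertion that a single, canonically-defined shuffle makes all the relevant squares commute strictly; once one sets up notation carefully for these shuffles, every identity holds on the nose with no homotopies required, which is exactly why $V$ is a \emph{strictly} commutative $\cW$-space monoid rather than merely an $E_\infty$ one. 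The grouplikeness step is then essentially immediate from the connectivity remarks above together with Lemma~\ref{lem:grouplike-criterion}.
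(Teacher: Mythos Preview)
Your proposal is correct and is exactly the unpacking of ``checking from the definitions'' that the paper leaves to the reader (the proposition carries a \qed with no further argument). One small caveat in your concrete grouplikeness step: the inverse $\bld w\in V(m_2,m_1)$ does not exist when $m_2=0$ and $m_1>0$, since $V(0,m_1)=\cV(\mR^{m_1},0)=\emptyset$; either take $\bld w\in V(m_2+1,m_1+1)$ instead (which is always contractible and lands you in the connected space $V(m_1+m_2+1,m_1+m_2+1)$), or simply rely on your first argument via the monoid isomorphism $\pi_0(V_{h\cW})\cong\pi_0(B\cW)\cong\mZ$.
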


\subsection{The unoriented periodic cobordism spectrum $MOP$} We proceed to show that the corresponding commutative orthogonal ring spectrum $\mS^{\cW}[V]$ is a model of the unoriented periodic cobordism spectrum. Using the description from Proposition~\ref{prop:graded-SW-formula}, we get that
\[ \mS^{\cW}[V_{\{d\}}]_n=
\begin{cases} V_{d+n}(\oplus^{\infty}\mR^n)_+\wedge_{O(d+n)}S^{d+n},&\text{ if $d+n\geq 0$},\\ *,& \text{ if $d+n<0$}.
\end{cases}
\] In this expression, the $O(n)$-action on the right hand side is induced from the diagonal left $O(n)$-action on $\oplus^{\infty}\mR^n$.

%%%
%%% The following paragraph is an omitted extra explanation.
%%%
%%% It was only used for the definition of the multiplication on the graded Grassmannian.
%%% 
% The graded multiplication on $\mS^{\cW}[V]$ is given by the maps
% \[
% \begin{aligned} % &\big(V_{d+m}(\oplus^{\infty}\mR^m)_+\wedge_{O(d+m)} S^{d+m}\big)\wedge % \big(V_{e+n}(\oplus^{\infty}\mR^n)_+\wedge_{O(e+n)}S^{e+n}\big)\\ % &\to V_{d+m+e+n}(\oplus^{\infty}(\mR^m\oplus\mR^n))_+\wedge_{O(d+m+e+n)} % S^{d+m+e+n}\\ % &=V_{d+e+m+n}(\oplus^{\infty}(\mR^m\oplus\mR^n))_+\wedge_{O(d+e+m+n)} % S^{d+e+m+n}
% \end{aligned}
% \] % where we use the canonical identification of $S^{d+m}\wedge S^{e+n}$ with $S^{d+m+e+n}$, and map a pair of orthogonal frames
% \[ % \big(\{v_1^i\}_{i\geq 1},\dots,\{v_{d+m}^i\}_{i\geq 1}\big),\qquad % \big(\{w_1^i\}_{i\geq 1},\dots,\{w_{e+n}^i\}_{i\geq 1}\big)
% \] % in $\oplus^{\infty}\mR^m$ and $\oplus^{\infty}\mR^n$ respectively, to the orthogonal frame
% \begin{equation}\label{eq:frames-multiplication} % \big(\{(v_1^i,0)\}_{i\geq 1},\dots,\{(v_{d+m}^i,0)\}_{i\geq 1}, % \{(0,w_1^i)\}_{i\geq 1},\dots,\{(0,w_{e+n}^i)\}_{i\geq 1}\big)
% \end{equation} % in $\oplus^{\infty}(\mR^m\oplus\mR^n)$.  
For $d=0$ we have $\mS^{\cW}[V_{\{0\}}]_n=V_n(\oplus^{\infty}\mR^n)_+\wedge_{O(n)}S^n$, which can be identified with the Thom space of the canonical vector bundle over the Grassmannian $\Gr_n(\oplus^{\infty}\mR^n)$ of $n$-planes in $\oplus^{\infty}\mR^n$. It follows that $\mS^{\cW}[V_{\{0\}}]$ is a model of the unoriented real cobordism spectrum which justifies introducing the standard notation
\[ \MO=\mS^{\cW}[V_{\{0\}}]
\] in our context. Since $V$ is grouplike, setting $R = \mS^{\cW}[V_{\{0\}}]$ in Proposition~\ref{prop:SW-M-equivalence} results in a stable equivalence $ \mS^{\cW}[V]\simeq \bigvee_{n\in \mZ}\Sigma^n\MO $. This justifies introducing the standard notation
\[ \MOP=\mS^{\cW}[V]
\] for the unoriented periodic cobordism spectrum.

\subsection{Graded Grassmannians} Before getting to the graded Grassmannians we are after, it is illuminating to discuss the passage from $\cW$-spaces to graded \mbox{$\cV$-spaces} in general. Let $\rho\colon \cW\to\cV$ be the functor that projects on the first variable, $\rho(n_1,n_2)=n_1$, and
\[ \rho\big([m,\sigma_1,\sigma_2]\colon (m_1,m_2)\to(n_1,n_2)\big) =\big([m,\sigma_1]\colon m_1\to n_1\big),
\] where we identify the morphism space $\cV(m_1,n_1)$ with $\cO(m_1\oplus m,n_1)/O(m)$. The functor $\rho$ gives rise to an adjunction $\rho_*\colon\Top^{\cW}\rightleftarrows \Top^{\cV}:\!\rho^*$, in which $\rho_*$ denotes the left Kan extension and $\rho^*$ pulls back a $\cV$-space to a $\cW$-space via $\rho$. We shall be interested in a graded refinement of this and write $\Grad_{\mZ}\Top^{\cV}$ for the category of $\mZ$-graded $\cV$-spaces $X=\{X_{\{d\}}\colon d\in \mZ\}$, equipped with the symmetric monoidal ``graded'' $\boxtimes$-product
\[ (X\boxtimes Y)_{\{d\}}=\coprod_{i+j=d}X_{\{i\}}\boxtimes Y_{\{j\}}.
\] The graded $\boxtimes$-product has as its monoidal unit the graded $\cV$-space which is $U^{\cV}$ (the monoidal unit for $\Top^{\cV}$) in degree $0$ and the initial (that is, empty) $\cV$-space in all other degrees. As in the discussion of graded orthogonal spectra (cf.\ Section~\ref{subsec:graded-orthogonal}), we have adjoint functors $t\colon \Grad_{\mZ}\Top^{\cV}\rightleftarrows \Top^{\cV}\!:\!c$, given by $t(X)=\coprod_{d\in \mZ}X_{\{d\}}$ and $c(Y)_{\{d\}}=Y$ for all $d\in \mZ$. 

Our aim is to lift $\rho_*$ to a functor $\rho_*^{\mZ}\colon\Top^{\cW}\to \Grad_{\mZ}\Top^{\cV}$ and for this we set 
\begin{equation}\label{eq:rhoZ} 
\rho^{\mZ}_*(X)_{\{d\}}(n)=
\begin{cases} X(n,d+n)/O(d+n),& \text{if $d+n\geq 0$}\\ \emptyset,& \text{if $d+n<0$}.
\end{cases}
\end{equation} We often write $\rho^d_*(X)=\rho^{\mZ}_*(X)_{\{d\}}$. Given a morphism $[m',\sigma]\colon m\to n$ in $\cV$ with $d+m\geq 0$, the induced map $\rho^d_*(X)([m',\sigma])$ is defined by passage to orbit spaces
\[ X(m,d+m)/O(d+m)\to X(n,d+n)/O(d+n),\qquad x\mapsto [m',\sigma_1,\sigma_2]_*(x),
\] where $[m',\sigma_1,\sigma_2]\colon (m,d+m)\to (n,d+n)$ denotes any morphism in $\cW$ that projects to $[m',\sigma]$ under $\rho$. The next lemma can be checked from the definitions.

\begin{lemma}\label{lem:rho-rhoZ-adjunctions} There is a commutative diagram of adjunctions
\[ 
\xymatrix{
\Top^{\cW} \ar@<.5ex>[rr]^-{\rho_*^{\mZ}} \ar@<.5ex>[dr]^-{\rho_*}& & \ar@<.5ex>[ll]^-{\rho_{\mZ}^*}  \Grad_{\mZ}\Top^{\cV}\ar@<-.5ex>[dl]_-t\\
& \Top^{\cV} \ar@<.5ex>[ul]^-{\rho^*}  \ar@<-.5ex>[ur]_-c& 
}
\] 
where the right adjoint $\rho_{\mZ}^*$ takes a graded $\cV$-space $X$ to the $\cW$-space $\rho^*_{\mZ}(X)$ whose restriction to $\cW_{\{d\}}$ equals $\rho^*(X_{\{d\}})$. Here all left adjoints are strong symmetric monoidal and all right adjoins are lax symmetric monoidal. 
\qed
\end{lemma} 

Recall the definition of the Stiefel $\cW$-space $V$ from Section~\ref{subsec:V-W-space}.

\begin{definition} The (unoriented) graded Grassmannian $\Gr$ is the commutative graded $\cV$-space monoid defined by $\Gr=\rho^{\mZ}_*(V)$.
\end{definition}

Thus, writing $\Gr_{d+n}(\oplus^{\infty}\mR^n)$ for the Grassmannian of $(d+n)$-planes in $\oplus^{\infty}\mR^n$, we have that
\[ \Gr_{\{d\}}(n)=
\begin{cases} \Gr_{d+n}(\oplus^{\infty}\mR^n), & \text{if $d+n\geq 0$} \\ \emptyset, & \text{if $d+n<0$}.
\end{cases}
\] 
with multiplication inherited from $V$.  It will be convenient to use the same notation $\Gr$ for the corresponding (ungraded) $\cV$-space $t(\Gr)$. The meaning will always be clear from the context.

In the next lemma we use the term ``$E_{\infty}$ space'' to mean an algebra over the topological Barratt--Eccles operad $\cE$  (see e.g.\ \cite{May_geometry}*{Construction~15.1}).

\begin{lemma}\label{lem:Gr_hV_identification}
There is a chain of weak equivalences of $E_{\infty}$ spaces $\Gr_{h\cV} \simeq B\cW$. 
\end{lemma}
\begin{proof}
Using the definition of $\Gr$ 
as a left Kan extension and writing $\rho_*^h(V)$ for the corresponding homotopy left Kan extension, we consider the following chain of maps
\[
\Gr_{h\cV}=\rho_*(V)_{h\cV}\ot\rho_*^h(V)_{h\cV}\to V_{h\cW}\to B\cW.
\]
Here the map $\rho_*^h(V)_{h\cV}\to V_{h\cW}$ is the canonical weak homotopy equivalence defined as in 
\cite[Theorem 5.5]{Hollender-V_modules} and the last map is the weak homotopy equivalence in Lemma~\ref{lem:VhW-BW-equivalence}. 
It remains to show that the map $\rho_*^h(V)\to 
\rho_*(V)$ induces a weak homotopy equivalence after passing to homotopy colimits. This map is in fact a level-wise equivalence as can be deduced from the identification $\rho_*^h(V)=\rho_*(\overline V)$. Indeed, under this identification, the map in question is induced by the level-wise equivalence $\epsilon\colon\overline V\to V$ and it follows from the explicit description in  \eqref{eq:rhoZ} that $\rho_*$ preserves level-wise equivalences between \mbox{$\cW$-spaces} that are, in the sense of 
 Definition~\ref{def:O-cof-second-var},  $O$-cofibrant in the second variable. Finally we remark that since the functor $\rho\colon \cW\to\cV$ is strict symmetric monoidal, the arguments in \cite[Section~6]{Schlichtkrull-Thom_symmetric} apply to show that the weak equivalences defined above are maps of $\cE$-algebras.
\end{proof}

\subsection{Oriented graded Grassmannian}\label{sec:oriented-graded-grass}
We begin by realizing the first Postnikov section of $B\cW$ in terms of a symmetric monoidal determinant functor. Let us view the set $\mN$ of natural numbers $n\geq 0$ as a permutative category with only identity morphisms and monoidal structure given by addition. Let us further view the group with two elements $\{\pm 1\}$ as a permutative category with a single object. We write $\red \mN$ for the permutative category whose underlying monoidal category is the product category $\mN\times \{\pm1\}$ and whose symmetric structure is specified by the isomorphisms $(-1)^{mn}\colon m\oplus n\to n\oplus m$. With this definition, the determinant function induces a symmetric monoidal functor $\Det\colon \cO\to\red\mN$. Applying Quillen's categorical localization construction to 
$\mN$ and $\red\mN$, we get the categories 
$\mN^{-1}\mN$ and $\red\mN^{-1}\red\mN$. Clearly $\mN^{-1}\mN$ decomposes as the coproduct of the full subcategories $(\mN^{-1}\mN)_{\{d\}}$ with objects $(n_1,n_2)$ such that $n_2-n_1=d$. Each of these subcategories is isomorphic to the ordered set of natural numbers.

Now let $S\cW$ be the subcategory of $\cW$ defined by the pullback diagram
\[
\xymatrix@-1pt{
S\cW \ar[r]\ar[d] & \cW\ar[d]^{\Det}\\
\mN^{-1}\mN \ar[r] &\red\mN^{-1}\red\mN .
}
\]
Thus, $S\cW$ has the same objects as $\cW$ and 
\[ 
S\cW((m_1,m_2),(n_1,n_2))=\big\{[m,\sigma_1,\sigma_2]\colon \det(\sigma_1)=\det(\sigma_2)\big\}
\]  
where $n_1=m_1+m$ and $n_2=m_2+m$. Again we have a decomposition of $S\cW$ as the coproduct of the full subcategories $S\cW_{\{d\}}$ with objects $(n_1,n_2)$ such that $n_2-n_1=d$. 
Applying a suitable Grothendieck construction as in Section \ref{subsec:Grothendieck-construction}, one can check that the classifying space of $S\cW$ is a model of $B\mathit{SO}\times \mZ$. Just as the category $\mN^{-1}\mN$ is not a monoidal subcategory of $\red\mN^{-1}\red\mN$, the category $S\cW$ is also not a monoidal subcategory of $\cW$. For this reason we often restrict to the full subcategory $S\cW_{\even}$ defined as the union of the subcategories $S\cW_{\{d\}}$ with $d$ even. It is easy to check that $S\cW_{\even}$ is indeed a permutative  subcategory of $\cW$.

Let $\iota \colon S\cW \to \cW$ be the inclusion and consider the corresponding adjunction 
$\iota_* \colon \Top^{S\cW} \rightleftarrows \Top^{\cW} \colon \iota^*$  defined by left Kan extension and restriction along
$\iota$. 

\begin{lemma}\label{lem:i*-explicit}
For an $S\cW$-space $X$, there are homeomorphisms
\[
\iota_*(X)(n_1,n_2)\cong
O(n_1,n_2)\times_{\tilde O(n_1,n_2)}X(n_1,n_2),
\] 
where $O(n_1,n_2)=O(n_1)\times O(n_2)$ and $\tilde O(n_1,n_2)$ denotes the subgroup given by the pairs $(a_1,a_2)$ such that $\det a_1=\det a_2$.
%\begin{cases} O(n_2)\times_{SO(n_2)}X(n_1,n_2), & \textrm{ if } n_2>0\\ O(n_1)/SO(n_1), & \textrm{ if } n_2=0.
%\end{cases}
\begin{proof}
We give the expression on the right hand side the structure of a $\cW$-space by assigning to a morphism $[m,\sigma_1,\sigma_2]\colon(m_1,m_2)\to(n_1,n_2)$ in $\cW$ the map
\[ 
O(m_1,m_2)\times_{\tilde O(m_1,m_2)}X(m_1,m_2)\to O(n_1,n_2)\times_{\tilde O(n_1,n_2)}X(n_1,n_2)
\] 
taking $\big((a_1,a_2),x\big)$ to $\big((\sigma_1(a_1\oplus \id), \sigma_2(a_2\oplus \id)), [m,\id,\id]_*(x)\big)$. It is easy to check that the $\cW$-space so defined has the universal property of a left Kan extension.
\end{proof}
\end{lemma}

\begin{definition} The \emph{oriented Stiefel $\cW$-space} is the $\cW$-space $\tilde V = \iota_*(\iota^*(V))$ and the \emph{oriented graded Grassmannian} is the graded $\cV$-space $\tilde\Gr = \rho^{\mZ}_*(\tilde V)$. Using the inclusion $\iota_{\even}\colon S\cW_{\even} \to \cW$ instead of $\iota$ leads to the \emph{oriented evenly graded Stiefel $\cW$-space}~$\tilde V_{\even}$ and the \emph{oriented evenly graded Grassmannian} $\tilde\Gr_{\even}$.
\end{definition}

It follows from the fact that $\iota_{\even}$ is a strict symmetric  monoidal functor that $\tilde V_{\even}$ is a commutative $\cW$-space monoid and that $\tilde\Gr_{\even}$ is a commutative graded $\cV$-space monoid. Using 
Lemma~\ref{lem:i*-explicit} we get the following explicit description of $\tilde V$:
\[ 
\tilde V(n_1,n_2)\iso
\begin{cases} O(n_2)\times_{SO(n_2)}V_{n_2}(\oplus^{\infty}\mR^{n_1}), & \textrm{ if } n_2>0\\ O(n_1)/SO(n_1), & \textrm{ if } n_2=0.
\end{cases}
\]
If $n_1>0$, then $V_{n_2}(\oplus^{\infty}\mR^{n_1})$ is contractible and $\tilde V(n_1,n_2)$ decomposes in two contractible path components. This gives the next result.
\begin{proposition} The $\cW$-space $V$ is positive fibrant and the adjunction counit $\tilde V\to V$ is a positive fibration.\qed
\end{proposition} Let us write $\tilde\Gr_m(\oplus^{\infty}\mR^n)$ for the Grassmannian of oriented $m$-planes in $\oplus^{\infty}\mR^n$. This can be identified with the orbit space $V_m(\oplus^{\infty}\mR^n)/SO(m)$. Applying the general formula for $\rho^{\mZ}_*$ in~\eqref{eq:rhoZ} to the description of $\tilde V$ given above, we get the following identification of the oriented graded Grassmanian: 
\[
 \tilde\Gr_{\{d\}}(n) \iso
\begin{cases} \tilde\Gr_{d+n}(\oplus^{\infty}\mR^n), &\text{if $d+n>0$}\\ O(n)/SO(n),& \text{if $d+n=0$}\\ \emptyset ,& \text{if $d+n<0$}.
\end{cases}
\]
As for $\Gr$, we sometimes use the same notation $\tilde\Gr$ for the corresponding (ungraded) $\cV$-space 
$t(\tilde\Gr)$. With these conventions, the adjunction counit $\tilde V  \to V$ and the unit of the adjunction $(\rho_*,\rho^*)$ give rise to a square of $\cW$-spaces
\begin{equation}\label{eq:grassmannians-stiefel-pullback} 
\xymatrix@-.5pt{ 
\tilde V\ar[r] \ar[d]_-{\tilde p}& V\ar[d]^-p\\ \rho^*\tilde\Gr \ar[r] & \rho^*\Gr.  
}
\end{equation} 
that can be shown to be a pullback diagram.

Now consider the commutative graded orthogonal ring spectrum $\mS^{\cW}[\tilde V_{\even}]$. By Proposition~\ref{prop:graded-SW-formula}, we have $\mS^{\cW}[\tilde V_{\{0\}}]_n\iso\tilde V(n,n)_+\wedge_{O(n)}S^n$, which can be identified with the Thom space of the canonical vector bundle over $\tilde\Gr_n(\oplus^{\infty}\mR^n)$. Hence $\mS^{\cW}[\tilde V_{\{0\}}]$ is a model of the oriented real cobordism spectrum and consequently we write $\MSO=\mS^{\cW}[\tilde V_{\{0\}}]$. Since $\tilde V_{\even}$ is grouplike, it follows from Proposition~\ref{prop:SW-M-equivalence} that there is a stable equivalence $\mS^{\cW}[\tilde V_{\even}]\simeq \bigvee_{n\in\mZ}\Sigma^{2n}\MSO$. Thus, $\mS^{\cW}[\tilde V_{\even}]$ is a model of the 2-periodic real oriented cobordism spectrum, and motivated by this we introduce the notation
\[ 
\MSOPeven=\mS^{\cW}[\tilde V_{\even}].
\]

\section{The graded Thom spectrum functor}\label{sec:graded-Thom-functor}
In the following we shall consider the over-category $\Top^{\cV}/\Gr$ of $\cV$-spaces over the Grassmannian $\cV$-space $\Gr$.  
Since every $\cV$-space over $\Gr$ inherits a grading from $\Gr$, the category $\Top^{\cV}/\Gr$ is isomorphic to the category $\Grad_{\mZ}\Top^{\cV}/\Gr$ of graded $\cV$-space over (the graded version of) $\Gr$. Hence the distinction between graded and ungraded $\cV$-spaces becomes immaterial in this context.  
The functor $\rho$ gives rise to a pair of adjunctions
\[ 
\xymatrix{ \Top^{\cW}/V \ar@<0.5ex>[r] & \Top^{\cW}/\rho^*\Gr\ar@<0.5ex>[l] \ar@<0.5ex>[r]^-{\rho_*} & \Top^{\cV}/\Gr\ar@<0.5ex>[l]^-{\rho^*} }
\] 
in which the first adjunction is given by post composition with and pullback along the adjunction unit $p\colon V\to \rho^*\Gr$. 
The category $\Top^{\cW}$ has the $\cW$-model structure from Section~\ref{sec:W-homotopy-theory} and Lind~\cite{Lind-diagram} has introduced an analogous $\cV$-model structure on $\Top^{\cV}$ whose weak equivalences are the $\cV$-equivalences, that is, the maps $X \to Y$ that induce weak homotopy equivalences $X_{h\cV}\to Y_{h\cV}$.
It is clear that the above adjunctions become Quillen adjunctions when we equip the participating over-categories with the induced model structures, cf.\ \cite[Theorem~7.6.4]{Hirschhorn-model}. Arguing as in the proof of 
\cite[Proposition~13.4]{Sagave-S_diagram}, we get the next result.

\begin{proposition}\label{prop:rho-p-Quillen-equivalence} The composite Quillen adjunction
\[ 
\xymatrix{ \rho_*\colon \Top^{\cW}/V \ar@<0.5ex>[r] & \ar@<0.5ex>[l] \Top^{\cV}/\Gr \colon p^* }
\] 
is a Quillen equivalence with respect to the $\cW$- and $\cV$-model structures.\qed 
\end{proposition}

We now consider graded Thom spectra arising from $\cV$-spaces over $\Gr$, which we define using the composite functor
\begin{equation}\label{eq:V-space-Thom}
T\colon \Top^{\cV}/\Gr\xr{p^*} \Top^{\cW}/V\xr{\mS^{\cW}} \SpO/\MOP
\end{equation} 
where $p^*$ is the right adjoint in the Quillen equivalence from Proposition~\ref{prop:rho-p-Quillen-equivalence}. In more detail, $p^*$ takes a map of $\cV$-spaces $f\colon X\to\Gr$ to the pullback $p^*_f(X)$ in
\[ 
\xymatrix@-.5pc{ p^*_f(X) \ar[rr] \ar[d]&& V\ar[d]^{p}\\ \rho^*X\ar[rr]^{\rho^*\!f} && \rho^*\Gr, }
\] 
and we set $T(f)=\mS^{\cW}[p^*_f(X)]$. Let us momentarily restrict to maps of $\cV$-spaces $f\colon X\to \Gr_{\{0\}}$ over the degree 0 part of $\Gr$. Then $p^*_f(X)(n,n)$ is obtained by pulling back the principal $O(n)$-bundle $V_n(\oplus^{\infty}\mR^n)\to \Gr_n(\oplus^{\infty}\mR^n)$ to $X(n)$, which implies that there is a pullback diagram of the associated vector bundles
\[ 
\xymatrix@-1pc{ p^*_f(X)(n,n)\times_{O(n)}\mR^n \ar[r] \ar[d]& V_n(\oplus^{\infty}\mR^n)\times_{O(n)}\mR^n\ar[d]\\ X(n) \ar[r]^-{f_n} & \Gr_n(\oplus^{\infty}\mR^n).  }
\] 
Hence we see from the explicit description in Proposition~\ref{prop:graded-SW-formula} that $T(f)_n$ is homeomorphic to the Thom space of the vector bundle classified by $f_n$. We conclude that applying the functor $T$ to $\cV$-spaces over $\Gr_{\{0\}}$ we get a model of the Thom spectrum for the corresponding sequence of vector bundles. The advantage of the current approach is that it extends to the graded setting and provides us with a lax symmetric monoidal graded Thom spectrum functor.

\begin{proposition}\label{prop:T-lax-monoidal} 
The graded Thom spectrum functor $T$ in \eqref{eq:V-space-Thom}
is lax symmetric monoidal.
\end{proposition}
\begin{proof} Since the functor $\rho$ is symmetric monoidal and $V \to \rho^*\Gr$ is a map of commutative $\cW$-space monoids, it follows that the first functor $p^*$ in the definition of $T$ is lax symmetric monoidal. The second functor $\mS^{\cW}$ is strong symmetric monoidal by Proposition~\ref{prop:S-Omega-adjunction}.
\end{proof}

Our construction of the graded Thom spectrum functor is homotopy invariant in the appropriate sense.

\begin{proposition}\label{prop:V-homotopy-invariance}
The graded Thom spectrum functor in \eqref{eq:V-space-Thom} takes $\cV$-equivalences over $\Gr$ to stable equivalences of orthogonal spectra.
\end{proposition}
\begin{proof}
We first show that $T$ takes level equivalences over $\Gr$ to level equivalences of orthogonal spectra. 
 For this we use the explicit description of $\mS^{\cW}$ from Proposition~\ref{prop:graded-SW-formula} and consider the pullback diagrams
\[
\xymatrix@-1pc{
p^*_f(X)(n_1,n_2)\times_{O(n_2)} S^{n_2} \ar[d]\ar[r]& V_{n_2}(\oplus^{\infty}\mR^{n_1})\times_{O(n_2)} S^{n_2}\ar[d] \\
(\rho^* X)(n_1,n_2)\ar[r]& (\rho^* \Gr)(n_1,n_2).
}
\]
These diagrams are homotopy cartesian since the vertical map on the right is a Hurewicz fibration and the argument given in the proof of~\cite[Lemma 2.3]{Schlichtkrull-Thom_symmetric} shows that the canonical section of the vertical map on the left is a Hurewicz cofibration. This gives the result about level equivalences. 

Now let $\Gamma$ be a level-wise Hurewicz fibrant replacement functor defined as in \cite{Schlichtkrull-Thom_symmetric}*{Section~5}. It follows from the first part of the proof that there is a natural level equivalence $T\to T\circ \Gamma$. Using this, the proposition can be proven by arguments analogous to (but slightly simpler than) those used in \cite{Schlichtkrull-Thom_symmetric}*{Section~5} to prove the corresponding statement for (ungraded) symmetric spectra. The analogue of the lifting functor in \cite{Schlichtkrull-Thom_symmetric}*{Section~4.2} is the $\cV$-space lifting functor introduced in 
Section~\ref{sec:lifting-space-level} below.
\end{proof}

Notice in particular that by the pullback square~\eqref{eq:grassmannians-stiefel-pullback}, applying the graded Thom spectrum functor to the canonical map $\tilde\Gr_{\even}\to\Gr$, we get the evenly graded oriented real cobordism spectrum $\MSOPeven$

\subsection{Lifting of space level data}\label{sec:lifting-space-level} We also want to construct graded Thom spectra from space level data and for this it will be convenient to use the homotopy colimit $\Gr_{h\cV}$ as a model of the classifying space $BO\times \mZ$, cf.\ Lemma~\ref{lem:Gr_hV_identification} and Section~\ref{subsec:Grothendieck-construction}. The point is that since the underlying $\cV$-space of $\Gr$ is not cofibrant, we cannot apply the colimit functor to $\Gr$ directly. For this reason we instead consider the bar resolution $\overline\Gr$ and the pair of Quillen adjunctions

\begin{equation}\label{eq:TopV/Gr-adjunctions} \xymatrix{ \Top/\Gr_{h\cV} \ar@<-0.5ex>[r] & \Top^{\cV}/\overline\Gr \ar@<-0.5ex>[l] \ar@<0.5ex>[r] &\Top^{\cV}/\Gr \ar@<0.5ex>[l] }
\end{equation} in which the adjunction on the left is induced by the usual $\colim$/constant functor adjunction, using the identification $\colim_{\cV}\overline\Gr =\Gr_{h\cV}$, and the adjunction on the right is induced by the canonical level equivalence $\epsilon\colon \overline\Gr\to\Gr$.

\begin{proposition}\label{prop:TopV/Gr-adjunctions} The Quillen adjunctions in \eqref{eq:TopV/Gr-adjunctions} are Quillen equivalences. \qed
\end{proposition}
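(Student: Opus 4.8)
The plan is to treat the two Quillen adjunctions separately; in both cases the adjunction itself is routine, and only the Quillen--equivalence property needs an argument. Throughout I would use that $\bar\Gr$ is a cofibrant $\cV$-space: each Grassmannian $\Gr_{d+n}(\oplus^{\infty}\mR^n)$ is a CW complex, so $\Gr$ is level-wise cofibrant, and the $\cV$-space analogue (due to Lind \cite{Lind-diagram}) of Lemma~\ref{lem:bar-resolution-cofibrant} applies. I would also use the identification $\colim_{\cV}\bar\Gr = \hocolim_{\cV}\Gr = \Gr_{h\cV}$ from Section~\ref{subsec:Top-categories}, together with the $\cV$-space analogue (again from \cite{Lind-diagram}) of Lemma~\ref{lem:hocolim-colim}, namely that for a cofibrant $\cV$-space $X$ the canonical map $\hocolim_{\cV}X\to\colim_{\cV}X$ is a weak equivalence.

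For the right-hand adjunction $\Top^{\cV}/\bar\Gr \rightleftarrows \Top^{\cV}/\Gr$, observe that the evaluation map $\bar\Gr \to \Gr$ is a level equivalence, hence a $\cV$-equivalence, and that the $\cV$-model structure is right proper \cite{Lind-diagram}. It is then formal that the post-composition/pullback adjunction induced by $\bar\Gr\to\Gr$ is a Quillen equivalence: if $X\to\bar\Gr$ is cofibrant and $Y\to\Gr$ is a $\cV$-fibration, right properness makes the projection $\bar\Gr\times_{\Gr}Y\to Y$ a $\cV$-equivalence, and two-out-of-three identifies the assertion that $X\to\bar\Gr\times_{\Gr}Y$ is a $\cV$-equivalence with the assertion that the adjoint map $X\to Y$ over $\Gr$ is one. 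This is the slicing-over-a-weak-equivalence argument of \cite[\S13]{Sagave-S_diagram} and \cite[Theorem~7.6.4]{Hirschhorn-model}.

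For the left-hand adjunction $\Top/\Gr_{h\cV} \rightleftarrows \Top^{\cV}/\bar\Gr$, the left adjoint sends $X\to\bar\Gr$ to $\colim_{\cV}X\to\colim_{\cV}\bar\Gr=\Gr_{h\cV}$, and the right adjoint sends $Y\to\Gr_{h\cV}$ to the pullback $\bar\Gr\times_{\const(\Gr_{h\cV})}\const Y\to\bar\Gr$, whose value at an object $n$ is $\bar\Gr(n)\times_{\Gr_{h\cV}}Y$; this is a Quillen adjunction because $\const\colon\Top\to\Top^{\cV}$ is right Quillen for the $\cV$-model structure. To get the Quillen equivalence I would use the criterion that the left adjoint reflects weak equivalences between cofibrant objects and that the derived counit is a weak equivalence. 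The first holds because a map of cofibrant $\cV$-spaces over $\bar\Gr$ is a $\cV$-equivalence iff it induces a weak equivalence on $\hocolim_{\cV}$, iff (by the cofibrant case of the $\cV$-analogue of Lemma~\ref{lem:hocolim-colim}) it induces a weak equivalence on $\colim_{\cV}$. For the second, when $Y\to\Gr_{h\cV}$ is a fibration each $\bar\Gr(n)\times_{\Gr_{h\cV}}Y$ is a homotopy pullback, and since homotopy colimits in spaces are universal (stable under homotopy base change) we get $\hocolim_{\cV}\bigl(\bar\Gr\times_{\const(\Gr_{h\cV})}\const Y\bigr)\simeq(\hocolim_{\cV}\bar\Gr)\times_{\Gr_{h\cV}}Y = \Gr_{h\cV}\times_{\Gr_{h\cV}}Y = Y$; chasing through a cofibrant replacement and the $\colim\simeq\hocolim$ comparison identifies the derived counit with this equivalence. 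Composing the two Quillen equivalences gives the statement.

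The main obstacle is the interchange of $\hocolim_{\cV}$ with homotopy pullback over the fixed base $\Gr_{h\cV}$ used in the last step, together with the point-set bookkeeping needed to make it rigorous: one must arrange that the strict level-wise pullbacks $\bar\Gr(n)\times_{\Gr_{h\cV}}Y$ genuinely compute homotopy pullbacks (using that $\bar\Gr$ is level-wise cofibrant and, if necessary, replacing $Y\to\Gr_{h\cV}$ by a Hurewicz fibration in the spirit of \cite[Section~IX]{LMS}), and that the resulting equivalence is natural and compatible with the (co)unit maps. Once this interchange is available, everything else is formal and runs parallel to \cite[\S13]{Sagave-S_diagram}.
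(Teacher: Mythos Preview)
The paper states this proposition without proof (it carries only a \qed), so there is no argument to compare against directly; presumably the authors regard it as a routine consequence of the machinery in \cite[\S13]{Sagave-S_diagram} and \cite{Lind-diagram}. Your outline is correct and is exactly the expected argument: the right-hand adjunction is a Quillen equivalence by right properness of the $\cV$-model structure together with the level equivalence $\bar\Gr\to\Gr$, and the left-hand adjunction is handled by combining the identification $\colim_{\cV}\simeq\hocolim_{\cV}$ on cofibrant $\cV$-spaces with the stability of homotopy colimits of spaces under homotopy base change. The obstacle you flag---commuting $\hocolim_{\cV}$ with the levelwise homotopy pullback---is genuine but standard: it is an instance of Puppe's theorem (or Mather's second cube theorem) applied to the simplicial space underlying the bar construction, and the point-set conditions you mention (levelwise cofibrancy of $\bar\Gr$, replacing $Y\to\Gr_{h\cV}$ by a fibration) are precisely what is needed to make the strict pullbacks model the homotopy pullbacks.
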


Now we can define the graded Thom spectrum functor on $\Top/\Gr_{h\cV}$ as the composition
\begin{equation}\label{eq:Top-Thom-functor} T\colon \Top/\Gr_{h\cV}\xrightarrow{\Gamma} \Top/\Gr_{h\cV}\to \Top^{\cV}/\Gr\xr{p^*} \Top^{\cW}/V\xr{\mS^{\cW}} \SpO/\MOP
\end{equation} 
where $\Gamma$ is the standard Hurewicz fibrant replacement functor arising from the path space fibration (see e.g.\ \cite[Section 2]{Schlichtkrull-Thom_symmetric}) and the second arrow is the composition in \eqref{eq:TopV/Gr-adjunctions}.  It will always be clear from the context whether we view $T$ as a functor on $\Top^{\cV}/\Gr$ or on $\Top/\Gr_{h\cV}$. The use of $\Gamma$ is inspired by the work of Lewis \cite[Section IX]{LMS} and ensures  that the composition of the first two functors
  in~\eqref{eq:Top-Thom-functor} sends weak homotopy equivalences over
  $\Gr_{h\cV}$ to level equivalences of $\cV$-spaces. Hence the first part of the proof of Proposition~\ref{prop:V-homotopy-invariance} gives the following result. 
\begin{proposition}
The Thom spectrum functor $T$ in \eqref{eq:Top-Thom-functor} sends weak homotopy equivalences over $\Gr_{h\cV}$ to level equivalences of orthogonal spectra.  \qed
\end{proposition}

The construction of the graded Thom spectrum functor presented here has good multiplicative properties: Let $\cD$ be an operad in $\Top$ augmented over the Barratt--Eccles operad $\cE$ and write $\Top[\cD]$ for the category of $\cD$-algebras in $\Top$ and $\SpO[\cD]$ for the category of $\cD$-algebras in $\SpO$. The point of having $\cD$ augmented over $\cE$ is that $\Gr_{h\cV}$ then inherits the structure of a $\cD$-algebra, cf.\ \cite{Schlichtkrull-Thom_symmetric}*{Section~6}. The following theorem can be proved by a slight modification of the argument used in the proof of \cite[Corollary~6.9]{Schlichtkrull-Thom_symmetric}.

\begin{theorem}\label{thm:D-algebra-Thom-functor} Let $\cD$ be an operad augmented over the Barratt--Eccles operad $\cE$. Then the graded Thom spectrum functor in \eqref{eq:Top-Thom-functor} induces a functor of $\cD$-algebras
\[ T\colon \Top[\cD]/\Gr_{h\cV}\to \SpO[\cD]/MOP. \eqno\qed
\]
\end{theorem}

\subsection{Graded Thom spectra from continuous functors}\label{subsec:Thom-from-continous}
Let $\cK$ be a small topological category and let $F \colon \cK \to \cW$ be a continuous functor. We assume that the morphism spaces in $\cK$ are cofibrant and that the inclusions of the identity morphisms are cofibrations. The Stiefel
$\cW$-space $V$ pulls back to a $\cK$-space $F^*(V)$ and we shall write
$V_{F} = F_*^h(F^*(V))$ for the homotopy left Kan extension of
$F^*(V)$ along~$F$. Notice that the $\cW$-space $V_F$ is cofibrant by Lemma~\ref{lem:homotopy-Kan-cofibrant} and the conditions on $\cK$. It is plausible that $\mS^{\cW}[V_F]$ should be a model for the graded Thom spectrum of the map 
$BF\colon B\cK \to B\cW$ via the equivalence $B\cW\simeq \Gr_{h\cV}$ from Lemma~\ref{lem:Gr_hV_identification}. 
In order to make this precise, we introduce the map of spaces
\[
i_F\colon\rho_*(V_{F})_{h\cV} \to \rho_*(V)_{h\cV}=\Gr_{h\cV}
\]
induced by the canonical map of $\cW$-spaces $V_F\to V$. Consider the diagram of spaces
\[
\xymatrix@-1pc{
\rho_*(V_{F})_{h\cV}  \ar[d]_{i_F} & \rho^h_*(V_{F})_{h\cV}\ar[d]\ar[l] \ar[r] & (V_{F})_{h\cW}\ar[d]  \ar[r] & F^*(V)_{h\cK}\ar[d]  \ar[r]  & B\cK\ar[d]^{BF} \\
\rho_*(V)_{h\cV} & \rho^h_*(V)_{h\cV} \ar[l] \ar[r] & V_{h\cW} \ar@{=}[r]  & V_{h\cW} \ar[r] & B\cW
}
\]
where the maps are defined as in Lemma~\ref{lem:Gr_hV_identification}. The squares are all commutative except for the square with the identity on $V_{h\cW}$ at the bottom. It follows from \cite[Lemma~A.3]{Schlichtkrull-Thom_symmetric} that the latter square is homotopy commutative with a canonical choice of homotopy. Furthermore, the arguments in the proof of 
Lemma~\ref{lem:Gr_hV_identification} show that the horizontal maps are all weak homotopy equivalences except possibly for the map $F^*(V)_{h\cK}\to B\cK$. The latter is a weak homotopy equivalence if we assume in addition that the full subcategory of $\cK$ generated by the objects mapped by $F$ to objects $(n_1,n_2)$ with $n_1\neq 0$ or  $n_2=0$ is homotopy cofinal. Thus, under this assumption, we may view $T(i_F)$ as the Thom spectrum associated to $BF\colon B\cK\to B\cW$.

\begin{proposition}
There is a canonical stable equivalence $\mS^{\cW}[V_F]\simeq T(i_F)$. 
\end{proposition}
\begin{proof}
On the one hand we can view $V_F$ as the pullback of $V$ along the map $\rho^*\rho_*V_F \to \rho^*\Gr$
%On the one hand we have the pullback diagram
%\[
%\xymatrix@-1pc{
%V_F \ar[r] \ar[d] &  V \ar[d] \\
%\rho^*\rho_*V_F \ar[r] & \rho^*\Gr
%}
%\]
and on the other hand we have the equivalence $\overline{\rho_*V_F}\to \rho_*V_F$ and the homotopy cartesian square
\[
\xymatrix@-1pc{
\overline{\rho_*V_F} \ar[r] \ar[d]& \overline \Gr\ar[d] \\
\const_{\cV}(\rho_*V_F)_{h\cV} \ar[r]^-{i_F} & \const_{\cV}\Gr_{h\cV}
}
\]
in $\Top^{\cV}$. By the homotopy invariance in Proposition~\ref{prop:V-homotopy-invariance}, this gives the result.
\end{proof}
Now suppose that $\cK$ has the structure of a permutative category and that $F \colon \cK \to \cW$ is strict symmetric monoidal. Then it follows from the discussion in \cite{Schlichtkrull-Thom_symmetric}*{Section~6} that $\rho_*(V_{F})_{h\cV}$ is an $E_{\infty}$ space and that $i_F$ is an $E_{\infty}$ map (in fact, a map of $\cE$-algebras). 
Hence $T(i_F)\to  \MOP$ is a map of $E_{\infty}$ orthogonal ring spectra by Theorem \ref{thm:D-algebra-Thom-functor}.
 
\begin{example}\label{ex:MUP}
  The collection of unitary groups $U(n)$ gives rise to a permutative
  topological category $\cU$ under direct sum. Applying Quillen's localization
  construction, we obtain a permutative topological category 
 $ \cU^{-1}\cU$ with $B( \cU^{-1}\cU)\simeq BU\times \mZ$. The
  canonical maps $U(n) \to O(2n)$ induce a strict symmetric monoidal functor
  $\cU^{-1}\cU\to \cW$ and the above construction provides an
  $E_{\infty}$ orthogonal ring spectrum 
  $\MUP=T(i_{\cU^{-1}\cU})$ that models the $2$-periodic
  complex cobordism spectrum.
\end{example}

\section{Orientations and the graded Thom isomorphism}\label{sec:graded-Thom-iso} In this section we discuss orientations and the resulting graded Thom isomorphisms. We first do this directly on the level of $\cW$-spaces and then derive the corresponding results for spaces over $\Gr_{h\cV}$.
\subsection{Orientations and the graded Thom isomorphism for $\cW$-spaces} In the discussion of orientations and the graded Thom isomorphism for $\cW$-spaces, it will be useful to introduce a graded version of the usual tensor structure on $\SpO$ (see \cite[Section 1]{MMSS}). By a graded space $K=\{K_{\{d\}}\colon d\in \mZ\}$ we understand a sequence of (unbased) spaces $K_{\{d\}}$ indexed by $d\in\mZ$. Given a graded orthogonal spectrum $E$ and a graded space $K$, we write $E\bigtriangleup K$ for the graded orthogonal spectrum with $d$th term
\[ (E\bigtriangleup K)_{\{d\}}=E_{\{d\}}\wedge (K_{\{d\}+}).
\] For a $\cW$-space $X$ we shall view the corresponding colimit $X_{\cW}$ as a graded space with components $X_{\cW_{\{d\}}}$ defined by evaluating the colimits over the subcategories $\cW_{\{d\}}$.  With this notation, every $\cW$-space $X$ gives rise to a ``diagonal'' map of graded orthogonal spectra
\begin{equation}\label{eq:SW-diagonal} \delta\colon \mS^{\cW}[X]\to \mS^{\cW}[X]\bigtriangleup X_{\cW}
\end{equation} with $d$th component
\[ \mS^{\cW}[X_{\{d\}}]\to \mS^{\cW}[X_{\{d\}}\times X_{\cW_{\{d\}}}]\xr{\cong} \mS^{\cW}[X_{\{d\}}]\wedge (X_{\cW_{\{d\}}+})
\] induced from the identity $X_{\{d\}}\to X_{\{d\}}$ and the projection $X_{\{d\}}\to X_{\cW_{\{d\}}}$. Now let us fix a commutative $\cW$-space monoid $W$ and consider the over-category $\Top^{\cW}/W$ of $\cW$-spaces over $W$. Each object $X\to W$ gives rise to a map of graded orthogonal spectra
\[ \mS^{\cW}[X]\xr{\delta} \mS^{\cW}[X]\bigtriangleup X_{\cW}\to \mS^{\cW}[W]\bigtriangleup X_{\cW}
\] which in turn admits a canonical extension to a map of graded $\mS^{\cW}[W_{\{0\}}]$-modules
\begin{equation}\label{eq:W_thom-diagonal} \delta_W\colon \mS^{\cW}[W_{\{0\}}]\wedge \mS^{\cW}[X]\to \mS^{\cW}[W]\bigtriangleup X_{\cW}.
\end{equation} Notice, that this is a natural transformation when we view the domain and codomain as functors on $\Top^{\cW}/W$, and that the latter over-category inherits the structure of a symmetric monoidal category from $\Top^{\cW}$ since we assume $W$ to be commutative. The fact that the functor $\mS^{\cW}$ is strong symmetric monoidal implies that the domain in \eqref{eq:W_thom-diagonal} is strong symmetric monoidal as a functor from $\Top^{\cW}/W$ to the category of graded $\mS^{\cW}[W_{\{0\}}]$-modules. Furthermore, we may view the colimit functor on $\Top^{\cW}$ as a strong symmetric monoidal functor, $X_{\cW}\times Y_{\cW}\cong (X\boxtimes Y)_{\cW}$, so that the codomain inherits the structure of a lax symmetric monoidal functor to graded $\mS^{\cW}[W_{\{0\}}]$-modules (but notice that $\mS^{\cW}[W]\bigtriangleup X_{\cW}$ is not an $\mS^{\cW}[W]$-module).

\begin{lemma}\label{lem:W_thom-diagonal} The map $\delta_W$ of graded $\mS^{\cW}[W_{\{0\}}]$-modules defines a monoidal natural transformation between lax symmetric monoidal functors. It is a stable equivalence provided that $W$ is grouplike and that $X$ is cofibrant.
\end{lemma}
\begin{proof} For the statement about monoidality of $\delta_W$ we remark that the map $\delta$ in \eqref{eq:SW-diagonal} may be viewed as a monoidal natural transformation between lax symmetric monoidal functors with values in $\Grad_{\mZ}\SpO$. From this it follows that $\delta_W$ is a composition of monoidal natural transformations, hence itself monoidal.

Now suppose that $W$ is grouplike. In order to show that $\delta_W$ is a stable equivalence for cofibrant $X$, we first consider the case of a free $\cW$-space of the form $F^{\cW}_{(n_1,n_2)}(*)$. The latter is concentrated in degree $n=n_2-n_1$ and the colimit over $\cW_{\{n\}}$ is a one-point space. 
We observe that in this case $\delta_W$ can be identified with the stable equivalence obtained by setting $R=\mS^{\cW}[W_{\{0\}}]$ in Proposition~\ref{prop:SW-M-equivalence}. It then follows by homotopy invariance that the lemma holds for $\cW$-spaces of the form $F_{(n_1,n_2)}^{\cW}(D^n)$ and by induction on $n$ that it holds for $\cW$-spaces of the form $F_{(n_1,n_2)}^{\cW}(S^n)$. Here we have used that the stable model structure on $\SpO$ satisfies the gluing lemma \cite[Theorem~7.4]{MMSS} and the monoid axiom \cite[Proposition~12.5]{MMSS} in order to avoid making additional cofibrancy conditions on $W$. It now follows by induction that the lemma holds for all cell complexes constructed from the generating cofibrations which gives the result since every cofibrant $\cW$-space is a retract of such a cell complex.
\end{proof}

We now set up a more general framework that allows us to establish a Thom isomorphism theorem for $\cW$-spaces that are not necessarily cofibrant and augmented over $W$. 

\begin{definition}\label{def:W-orientation} Given a grouplike and positive fibrant commutative $\cW$-space monoid $W$, a \emph{$W$-orientation} of a $\cW$-space $X$ is a diagram of the form
\[ X\xl{\sim} X' \to W
\] where $X'$ is a $\cW$-space and the first map is a $\cW$-equivalence as indicated. We say that a $\cW$-space is \emph{oriented} (resp.\ \emph{orientable}) if it is (resp.\ can be) equipped with a $W$-orientation. A map of $W$-oriented $\cW$-spaces is defined in the obvious way as a pair of compatible maps relating the orientations.
\end{definition}

This notion is designed so as to be homotopy invariant: The positive fibrancy condition on $W$ ensures that if $X$ is $W$-orientable, then there exists a $W$-orientation in which $X'\to X$ is a positive acyclic fibration. Since positive acyclic fibrations are preserved under pullback, this in turn implies that if $Y\to X$ is a map of \mbox{$\cW$-spaces} and $X$ is $W$-orientable, then also $Y$ is $W$-orientable. Furthermore, if $X$ is $W$-orientable and $X\to Y$ is a $\cW$-equivalence, then clearly $Y$ is also $W$-orientable.

Notice, that if a $\cW$-space $X$ is $W$-orientable, then $X$ is also orientable with respect to a cofibrant replacement 
$W^{\cof}\xr{\sim} W$ (an acyclic fibration) in the positive model structure on commutative $\cW$-space monoids established in Theorem~\ref{thm:positive-model-structure}. For the Thom isomorphism it will be important that the orthogonal spectrum $\mS^{\cW}[W]$ is homotopically well-behaved and for this reason we adopt the following standing assumptions for the rest of this subsection:
\textbf{The grouplike commutative $\cW$-space monoid $W$ is assumed to be cofibrant as well as fibrant in the positive model structure on commutative $\cW$-space monoids.} This is not a serious restriction since any commutative $\cW$-space monoid admits a cofibrant fibrant replacement.

Let $X\xl{\sim} X' \to W$ be an oriented $\cW$-space and let as usual $\overline{X'}$ denote the bar resolution on $X'$. The given maps $\overline{X'}\to X'\to W$ then define an object in $\Top^{\cW}/W$ and we may form the composition
\[ \mS^{\cW}[W_{\{0\}}]\wedge \mS^{\cW}[\overline{X'}]\xr{\delta_W} \mS^{\cW}[W]\bigtriangleup X'_{h\cW} \to \mS^{\cW}[W]\bigtriangleup X_{h\cW}
\] where the first map is the natural transformation \eqref{eq:W_thom-diagonal} applied to $\overline{X'}$ and the second map is induced by the map of homotopy colimits $X'_{h\cW}\to X_{h\cW}$. Here we identify the colimit $\overline{X'}_{\cW}$ with the homotopy colimit $X'_{h\cW}$. Using the given maps $\overline{X'}\to\overline X\to X$, we thus get a chain of maps of graded $\mS^{\cW}[W_{\{0\}}]$-modules
\begin{equation}\label{eq:W-hocolim-thom-diagonal} \mS^{\cW}[W_{\{0\}}]\wedge \mS^{\cW}[X]\leftarrow \mS^{\cW}[W_{\{0\}}]\wedge \mS^{\cW}[\overline {X'}]\to \mS^{\cW}[W]\bigtriangleup X_{h\cW}
\end{equation} that are natural with respect to maps of $W$-oriented $\cW$-spaces.

\begin{lemma}\label{lem:W-hocolim-thom-diagonal} If the $W$-oriented $\cW$-space $X$ is $\mS^{\cW}$-good, then the natural maps in \eqref{eq:W-hocolim-thom-diagonal} are stable equivalences. 
\end{lemma}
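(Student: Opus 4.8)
The plan is to show that each of the two maps in \eqref{eq:W-hocolim-thom-diagonal} is a stable equivalence separately, in both cases reducing to the cofibrant case already settled in Lemma~\ref{lem:W_thom-diagonal} and then propagating stable equivalences by means of the $\mathbb{S}^{\cW}$-goodness results of Section~\ref{sec:W-spaces-SpO}.

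First I would record the standard facts used throughout. Since $W$ is a cofibrant commutative $\cW$-space monoid, its underlying $\cW$-space is positive cofibrant, hence cofibrant, and therefore so is the $\cW$-space $W_{\{0\}}$; consequently $\mathbb{S}^{\cW}[W_{\{0\}}]$ is a cofibrant orthogonal spectrum, so $\mathbb{S}^{\cW}[W_{\{0\}}]\wedge(-)$ preserves stable equivalences (cf.\ \cite{MMSS}). I would also note that $\overline{X'}\to X'$ is a level equivalence (cf.\ Section~\ref{subsec:Top-categories}) while $X'\to X$ is a $\cW$-equivalence, so $\overline{X'}\to X$ is a $\cW$-equivalence; as $\overline{X'}$ is $\mathbb{S}^{\cW}$-good by Proposition~\ref{prop:bar-SW-good} and $X$ is $\mathbb{S}^{\cW}$-good by hypothesis, Proposition~\ref{prop:SW-good-equivalence} shows that $\mathbb{S}^{\cW}[\overline{X'}]\to\mathbb{S}^{\cW}[X]$ is a stable equivalence. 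Smashing with $\mathbb{S}^{\cW}[W_{\{0\}}]$ then establishes that the left-pointing map in \eqref{eq:W-hocolim-thom-diagonal} is a stable equivalence.

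For the right-pointing map, recall that it is $\delta_W$ evaluated at $\overline{X'}$ followed by the map induced by $X'_{h\cW}\to X_{h\cW}$, using the identification $\colim_{\cW}\overline{X'}\cong X'_{h\cW}$. Since $X'$ need not be cofibrant, Lemma~\ref{lem:W_thom-diagonal} does not apply to $\overline{X'}$ directly, so I would pick a cofibrant replacement $q\colon X''\xrightarrow{\ \simeq\ }X'$ in the $\cW$-model structure; composing with $X'\to W$ makes $X''$ a $W$-oriented $\cW$-space and $\overline{X''}$ is cofibrant by Lemma~\ref{lem:bar-resolution-cofibrant}. By naturality of $\delta_W$ there is a commutative square relating $\delta_W$ at $\overline{X''}$ to $\delta_W$ at $\overline{X'}$. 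Its top map is a stable equivalence by Lemma~\ref{lem:W_thom-diagonal}, since $\overline{X''}$ is cofibrant and $W$ is grouplike; its left vertical map is $\mathbb{S}^{\cW}[W_{\{0\}}]\wedge(-)$ applied to $\mathbb{S}^{\cW}[\overline{X''}]\to\mathbb{S}^{\cW}[\overline{X'}]$, a stable equivalence because $\overline{X''}\to\overline{X'}$ is again a $\cW$-equivalence between $\mathbb{S}^{\cW}$-good $\cW$-spaces (Propositions~\ref{prop:bar-SW-good} and \ref{prop:SW-good-equivalence}) and $\mathbb{S}^{\cW}[W_{\{0\}}]$ is cofibrant; and its right vertical map is $\mathbb{S}^{\cW}[W]\bigtriangleup(-)$ applied to the weak equivalence $X''_{h\cW}\to X'_{h\cW}$, a stable equivalence since smashing levelwise with a weak equivalence of spaces preserves weak equivalences. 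Two-out-of-three then gives that $\delta_W$ at $\overline{X'}$ is a stable equivalence, and the same levelwise argument applied to the weak equivalence $X'_{h\cW}\to X_{h\cW}$ (a $\cW$-equivalence by definition) shows that $\mathbb{S}^{\cW}[W]\bigtriangleup X'_{h\cW}\to\mathbb{S}^{\cW}[W]\bigtriangleup X_{h\cW}$ is a stable equivalence. Composing, the right-pointing map in \eqref{eq:W-hocolim-thom-diagonal} is a stable equivalence.

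The only point requiring genuine care is that $X'$ and hence $\overline{X'}$ are not assumed cofibrant, so Lemma~\ref{lem:W_thom-diagonal} cannot be invoked for $\overline{X'}$ on the nose; the cofibrant replacement $X''$, the compatibility of bar resolutions with $\mathbb{S}^{\cW}$-goodness, and the cofibrancy of $\mathbb{S}^{\cW}[W_{\{0\}}]$ are precisely what bridges this gap. Everything else is formal manipulation with naturality and two-out-of-three.
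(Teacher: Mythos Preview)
Your proof is correct and follows essentially the same strategy as the paper's. The only difference is presentational: the paper compresses your cofibrant-replacement step $X''\to X'$ into a single clause, ``We may assume without loss of generality that $X'$ is cofibrant,'' and then applies Lemma~\ref{lem:W_thom-diagonal} directly to $\overline{X'}$ (cofibrant by Lemma~\ref{lem:bar-resolution-cofibrant}); your naturality square with $\overline{X''}$ is precisely what justifies that reduction, so you have in fact written out what the paper elides.
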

\begin{proof} Using that $\overline {X'}$ is always $\mS^{\cW}$-good by Proposition~\ref{prop:bar-SW-good},
we may assume without loss of generality that $X'$ is cofibrant. Then $\overline{X'}$ is also cofibrant, so that the last map in \eqref{eq:W-hocolim-thom-diagonal} is a stable equivalence by Lemma~\ref{lem:W_thom-diagonal}.
We also know that the induced map $\mS^{\cW}[\overline{X'}] \to \mS^{\cW}[X]$ is a stable equivalence because $X$ and $\overline{X'}$ are $\mS^{\cW}$-good. Since the spectrum $\mS^{\cW}[W_{\{0\}}]$ is a retract of $\mS^{\cW}[W]$, it follows from \cite[Proposition 1.3.11 and Theorem 1.3.29]{Stolz_equivariant} and the cofibrancy assumption on $\mS^{\cW}[W]$ that $\mS^{\cW}[W_{\{0\}}]\sm -$ preserves stable equivalences. 
\end{proof} 
In the following we shall fix a map of commutative orthogonal ring spectra $\mS^{\cW}[W_{\{0\}}]\to R$ making $R$ a cofibrant $\mS^{\cW}[W_{\{0\}}]$-algebra and consider the commutative graded orthogonal ring spectrum
\[ 
RP=\mS^{\cW}[W]\wedge _{\mS^{\cW}[W_{\{0\}}]} R.
\] 
introduced in Definition~\ref{def:RP}.  Applying the functor 
$-\wedge _{\mS^{\cW}[W_{\{0\}}]} R$ to the chain of stable equivalences in Lemma~\ref{lem:W-hocolim-thom-diagonal}, we get the graded ``Thom isomorphism'' in the following theorem.

\begin{theorem}\label{thm:R-Thom-iso}
Let $R$ be a cofibrant $\mS^{\cW}[W_{\{0\}}]$-algebra and let $X$ be an $\mS^{\cW}$-good $\cW$-space equipped with a $W$-orientation. Then there is a chain of natural stable equivalences
$
R\wedge \mS^{\cW}[X]\simeq RP\bigtriangleup X_{h\cW}
$
of graded $R$-modules. \qed
\end{theorem}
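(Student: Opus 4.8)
\emph{Proof proposal.} The plan is to deduce the theorem from Lemma~\ref{lem:W-hocolim-thom-diagonal} by a single application of extension of scalars. Since $X$ is $\mS^{\cW}$-good and comes equipped with a $W$-orientation $X\xl{\simeq}X'\to W$, that lemma supplies a natural chain
\[
\mS^{\cW}[W_{\{0\}}]\wedge \mS^{\cW}[X]\leftarrow \mS^{\cW}[W_{\{0\}}]\wedge \mS^{\cW}[\overline{X'}]\to \mS^{\cW}[W]\bigtriangleup X_{h\cW}
\]
of stable equivalences of graded $\mS^{\cW}[W_{\{0\}}]$-modules, natural with respect to maps of $W$-oriented $\cW$-spaces.

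First I would apply the extension of scalars functor $R\wedge_{\mS^{\cW}[W_{\{0\}}]}(-)$ to this chain. By the cofibrancy hypothesis on $R$ recorded before the theorem, this functor carries stable equivalences of $\mS^{\cW}[W_{\{0\}}]$-modules to stable equivalences of $R$-modules, hence, applied in each degree, carries stable equivalences of graded modules to stable equivalences of graded modules. Thus the image chain is again a natural chain of stable equivalences, now of graded $R$-modules.

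It then remains to identify the two outer terms. On the left, the unit isomorphism for extension of scalars gives $R\wedge_{\mS^{\cW}[W_{\{0\}}]}\bigl(\mS^{\cW}[W_{\{0\}}]\wedge \mS^{\cW}[X]\bigr)\cong R\wedge \mS^{\cW}[X]$. On the right, I would use that $R\wedge_{\mS^{\cW}[W_{\{0\}}]}(-)$ is a left adjoint between module categories tensored over graded spaces, so it commutes with the functor $(-)\bigtriangleup X_{h\cW}$, which in each degree is smashing with the suspension spectrum of a fixed space; combined with the definition $RP = R\wedge_{\mS^{\cW}[W_{\{0\}}]}\mS^{\cW}[W]$ this yields $R\wedge_{\mS^{\cW}[W_{\{0\}}]}\bigl(\mS^{\cW}[W]\bigtriangleup X_{h\cW}\bigr)\cong RP\bigtriangleup X_{h\cW}$. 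Assembling these identifications produces the asserted natural chain $R\wedge \mS^{\cW}[X]\simeq RP\bigtriangleup X_{h\cW}$ of graded $R$-modules.

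I do not expect a serious obstacle: the homotopical content is entirely absorbed into Lemma~\ref{lem:W-hocolim-thom-diagonal} (which in turn rests on Proposition~\ref{prop:SW-M-equivalence}), and the remaining points are formal. The only step deserving a line of justification is the commutation of extension of scalars past $\bigtriangleup$, which follows because $\bigtriangleup$ with a fixed graded space is a colimit-type construction and left adjoints preserve such constructions; the naturality of the resulting chain in the $W$-oriented $\cW$-space $X$ is inherited from that of the chain in Lemma~\ref{lem:W-hocolim-thom-diagonal}.
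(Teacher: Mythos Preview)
Your proposal is correct and matches the paper's own argument essentially verbatim: the paper states just before the theorem that one obtains it by applying the extension of scalars functor $R\wedge_{\mS^{\cW}[W_{\{0\}}]}(-)$ to the chain of stable equivalences in Lemma~\ref{lem:W-hocolim-thom-diagonal}, and then marks the theorem with a \qed. Your additional remarks about identifying the endpoints and the commutation of extension of scalars with $\bigtriangleup$ simply make explicit the formal steps the paper leaves implicit.
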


\subsection{Multiplicative orientations}\label{subsec:mult-orientations}
Let $\cD$ be an operad of topological spaces in the sense of \cite{May_geometry}, and let $\mD$ be the corresponding monad on $\Top^{\cW}$ defined by 
\[
\mD(X)=\coprod_{n\geq 0}\cD(n)\times _{\Sigma_n}X^{\boxtimes n}, 
\]
where $X^{\boxtimes 0}$ denotes the 
monoidal unit $U^{\cW}$. By definition, a \emph{$\cD$-algebra in $\Top^{\cW}$} is an algebra for the monad $\mD$, and we write $\Top^{\cW}[\cD]$ for the category of $\cD$-algebras. It follows from the universal property of the $\boxtimes$-product that a $\cD$-algebra structure on a $\cW$-space $X$ amounts to a sequence of natural transformations 
\[
\Theta_k\colon \cD(k)\times X(n_1^1,n_2^1)\times \dots\times X(n_1^k,n_2^k)
\to X((n_1^1,n_2^1)\oplus\dots\oplus(n_1^k,n_2^k))
\]
of functors $\cW^k\to \Top$, subject to the usual associativity, unitality, and equivariance conditions, cf.\ \cite[Lemma~1.4]{May_geometry}. The canonical projection of $\cD$ onto the terminal ``commutativity'' operad allows us to view any commutative $\cW$-space monoid as a $\cD$-algebra. 

\begin{definition}\label{def:multiplicative-orientation}
Given a grouplike and positive fibrant commutative $\cW$-space monoid $W$, a \emph{$\cD$-multiplicative $W$-orientation} of a $\cD$-algebra $X$ in $\Top^{\cW}$ is a specified lift of a $W$-orientation $X\xl{\sim} X'\to W$ to a diagram in $\Top^{\cW}[\cD]$.
\end{definition}

In connection with $\cW$-spaces it is again convenient to consider operads augmented over the Barratt--Eccles operad $\cE$. The reason for this comes from the following lemma which is proved by an argument similar to that used in the proof of the analogous result for $\cI$-spaces \cite[Lemma~6.7]{Schlichtkrull-Thom_symmetric}. 

\begin{lemma}\label{lem:Barratt-Eccles-bar-resolution}
Let $\cD$ be an operad augment over $\cE$. Then the bar resolution $X\mapsto \overline X$ induces a functor $\Top^{\cW}[\cD]\to\Top^{\cW}[\cD]$ such that the natural level equivalence $\overline X\to X$ is a map of $\cD$-algebras. \qed
\end{lemma}

Since the colimit functor on $\Top^{\cW}$ is strong symmetric monoidal and we have the identification $\overline{X}_{\cW}=X_{h\cW}$, it follows from the lemma that the homotopy colimit functor  induces a functor $\Top^{\cW}[\cD]\to \Top[\cD]$, taking $X$ to $X_{h\cW}$. 

\begin{example}
With notation as in Sections~\ref{sec:oriented-graded-grass} and~\ref{subsec:Thom-from-continous}, a factorization of a continuous strict symmetric monoidal functor $F\colon \cK \to \cW$ through $\iota_{\even}\colon S\cW_{\even} \to \cW$ induces an $\cE$-multiplicative orientation of $V_{F}$ with respect to $\tilde V_{\even}$. 
\end{example}

We wish to have a multiplicative version of the graded Thom isomorphism in 
Theorem~\ref{thm:R-Thom-iso}. An operad $\cD$ as above induces a monad $\mD$ on the category $\Grad_{\mZ}\SpO$ in the usual way,
\[
\mD(E)=\bigvee_{n\geq 0}\cD(n)_+\wedge _{\Sigma_n}E^{\wedge n}, 
\]
and we define a \emph{graded $\cD$-algebra} to be an algebra for this monad.
Given a commutative orthogonal ring spectrum $R$, we shall use the term \emph{a graded $\cD$-algebra under $R$} to mean a graded orthogonal spectrum $A$ equipped with the structure of a graded $\cD$-algebra and a map of graded $\cD$-algebras $R\to A$, where we view $R$ as a graded $\cD$-algebra concentrated in degree zero.

\begin{theorem}\label{thm:mult-R-Thom-iso}
Let $R$ be a cofibrant commutative $\mS^{\cW}[W_{\{0\}}]$-algebra, let $\cD$ be an operad augmented over $\cE$, and let $X$ be an $\mS^{\cW}$-good \mbox{$\cD$-algebra} in $\Top^{\cW}$ equipped with a $\cD$-multiplicative $W$-orientation with cofibrant $W$. Then there is a chain of stable equivalences $R\wedge \mS^{\cW}[X]\simeq RP\bigtriangleup X_{h\cW}$ of graded $\cD$-algebras under~$R$. 
\end{theorem}
\begin{proof}
We must show that with the given assumptions, the chain of stable equivalences in 
Theorem~\ref{thm:R-Thom-iso} is in fact a chain of maps of $\cD$-algebras under $R$. For this we 
use that the monoidal natural transformation $\delta_W$ in Lemma~\ref{lem:W_thom-diagonal} takes every $\cD$-algebra over $W$ to a map of graded $\cD$-algebras under $\mS^{\cW}[W_{\{0\}}]$. By Lemma~\ref{lem:Barratt-Eccles-bar-resolution}, this applies in particular to the $\cD$-algebra $\overline{X'}\to X'\to W$ given by the orientation. The result now follows since the extension of scalars along $\mS^{\cW}[W_{\{0\}}]\to R$ is a symmetric monoidal functor and hence preserves operad actions.   
\end{proof}

If the operad $\cD$ in Definition~\ref{def:multiplicative-orientation} is an $E_{\infty}$-operad, then we say that $X$ has an $E_{\infty}$ $W$-orientation. In case the $E_{\infty}$ operad $\cD$ is not itself augmented over $\cE$, we may replace it by the product operad 
$\cD\times \cE$, which is then an $E_{\infty}$ operad that is augmented over $\cE$. 

\begin{corollary}\label{cor:D-multiplicative-Thom-iso} 
Suppose that the operad $\cD$ in Theorem~\ref{thm:mult-R-Thom-iso} is an $E_{\infty}$ operad (such that $X$ has an $E_{\infty}$ orientation). Then $R\wedge \mS^{\cW}[X]\simeq RP\bigtriangleup X_{h\cW}$ as graded $E_{\infty}$ $R$-algebras. \qed
\end{corollary}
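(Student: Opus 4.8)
The plan is to read this off Theorem~\ref{thm:mult-R-Thom-iso}, the only point being to arrange that the $E_{\infty}$ operad at hand is augmented over the Barratt-Eccles operad $\cE$ so that the theorem applies.

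First I would dispose of the case where the given $E_{\infty}$ operad $\cD$ already carries an augmentation $\cD\to\cE$. Then Theorem~\ref{thm:mult-R-Thom-iso} applies directly and produces a chain of stable equivalences $R\wedge\mS^{\cW}[X]\simeq RP\bigtriangleup X_{h\cW}$ of graded $\cD$-algebras under $R$. Since $\cD$ is an $E_{\infty}$ operad, a graded $\cD$-algebra under $R$ is precisely what is meant by a graded $E_{\infty}$ $R$-algebra, so this settles the augmented case.

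For a general $E_{\infty}$ operad $\cD$ I would pass to the product operad $\cD'=\cD\times\cE$, as in the remark preceding the corollary. Each space $\cD'(k)=\cD(k)\times\cE(k)$ is contractible with free $\Sigma_k$-action, so $\cD'$ is again an $E_{\infty}$ operad, and the second projection $\cD'\to\cE$ is an augmentation over the Barratt-Eccles operad. The first projection $\cD'\to\cD$ lets one regard every $\cD$-algebra, in $\Top^{\cW}$ or in $\Grad_{\mZ}\SpO$, as a $\cD'$-algebra by restriction of structure; in particular the $E_{\infty}$ $W$-orientation of $X$ restricts to a $\cD'$-multiplicative $W$-orientation, and $X$ is still $\mS^{\cW}$-good. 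Applying Theorem~\ref{thm:mult-R-Thom-iso} with $\cD'$ in place of $\cD$ therefore yields a chain of stable equivalences of graded $\cD'$-algebras under $R$, and restricting these structures back along $\cD\to\cD'$ gives the asserted chain of graded $E_{\infty}$ $R$-algebras.

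This is a purely formal consequence of Theorem~\ref{thm:mult-R-Thom-iso}, so there is no genuine obstacle; the only thing demanding a little care is the bookkeeping that passing between $\cD$ and $\cD\times\cE$ preserves both the $E_{\infty}$ property of the operad and the multiplicativity of the $W$-orientation of $X$, together with the observation that restriction along $\cD\to\cD\times\cE$ sends maps of graded $\cD\times\cE$-algebras under $R$ to maps of graded $E_{\infty}$ $R$-algebras.
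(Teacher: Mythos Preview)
Your approach is exactly the paper's: the corollary is stated with a \qed because it is immediate from Theorem~\ref{thm:mult-R-Thom-iso}, the only remark being the replacement of $\cD$ by $\cD\times\cE$ in the unaugmented case, which the paper makes in the sentence preceding the corollary.

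There is one slip in your write-up. In the last step you say you ``restrict these structures back along $\cD\to\cD'$''. There is no such map of operads: you only have the projections $\cD'=\cD\times\cE\to\cD$ and $\cD'\to\cE$, and restriction along an operad map goes the other way (it turns algebras over the target into algebras over the source). Fortunately no such restriction is needed. You already observed that $\cD'$ is an $E_{\infty}$ operad, so the chain of stable equivalences of graded $\cD'$-algebras under $R$ produced by Theorem~\ref{thm:mult-R-Thom-iso} is already a chain of graded $E_{\infty}$ $R$-algebras; you should simply stop there. If you really wanted the original $\cD$-structure on $R\wedge\mS^{\cW}[X]$ and on $RP\bigtriangleup X_{h\cW}$ to be the one appearing in the conclusion, you would have to argue separately that the $\cD'$-structures produced by the proof restrict (along $\cD'\to\cD$) from the original $\cD$-structures, not manufacture a nonexistent section $\cD\to\cD'$.
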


The above corollary applies in particular when $M=X$ is a cofibrant commutative $\cW$-space monoid equipped with a map of commutative $\cW$-space monoids $M\to W$ in which case we may set $\cD=\cE$. In general, the corollary shows that the graded Thom isomorphism is compatible with the Dyer--Lashof operations arising from the $E_{\infty}$ structures. 

We now prepare to give an algebraic formulation of the graded Thom isomorphism.
If a graded topological space $K=\{K_n\colon n\in \mZ\}$ has the extra structure of a graded monoid with multiplication defined by maps 
$K_m\times K_n\to K_{m+n}$, then $RP\bigtriangleup K$ inherits the structure of a graded orthogonal ring spectrum. We wish to describe $\pi_*(RP\bigtriangleup K)$ solely in terms of the $R$-homology of $K$. To this end, let $d\geq 0$ denote the periodicity degree of $W$ as in Section~\ref{sec:grouplike} and let us introduce the notation
\[
R_{\circledast}(K)=\bigoplus_{n\in d\mZ}\Sigma^nR_*(K_n),
\] 
where $R_*(-)=\pi_*(R\wedge(-)_+)$ denotes the unreduced homology theory associated with $R$. We think of $R_{\circledast}(K)$ as a bigraded $\pi_*(R)$-module with $(m,n)$th term $\Sigma^nR_m(K_n)$ (that is, $R_{m-n}(K_n)$), and we write $\sigma^n\alpha$ for the element $\alpha\in R_{m-n}(K_n)$ when thought of as an element in bidegree $(m,n)$. When $K$ is a graded monoid we give $R_{\circledast}(K)$ the structure of a bigraded $\pi_*(R)$-algebra with multiplication
\begin{equation}\label{eq:circledast-multilplication}
\sigma^m\alpha\cdot\sigma^n\beta=\sigma^{m+n}(\alpha\cdot\beta),
\end{equation} 
where $\alpha\cdot\beta$ denotes the product in the homotopy groups of the graded orthogonal ring spectrum $c(R)\bigtriangleup K$. If $K$ is homotopy commutative, then this multiplication is graded commutative in the sense that $\sigma^m\alpha\cdot\sigma^n\beta=(-1)^{|\alpha||\beta|}\sigma^n\beta\cdot\sigma^m\alpha$.

\begin{remark}
It may seem natural to introduce the sign $(-1)^{|\alpha|n}$ in the multiplication 
\eqref{eq:circledast-multilplication}. However, this does not change the product since if $d$ is odd, then $\pi_*(R)$ is necessarily an $\mF_2$-algebra.
\end{remark}

\begin{proposition}
For a graded topological space $K$, there is an isomorphism of bigraded $\pi_*(R)$-modules
$
R_{\circledast}(K)\cong \pi_*(RP\bigtriangleup K)
$.
If $K$ is a graded monoid, then this is an isomorphism of bigraded 
$\pi_*(R)$-algebras. 
\end{proposition}
\begin{proof}
With $d\geq 0$ the periodicity degree of $W$, we choose an element $u\in W(d_1,d_2)$ such that $d=d_2-d_1$. Since $[u]\in \pi_d(RP_{\{d\}})$ is a unit in $\pi_*(RP)$, we have elements $[u]^{n/d}\in 
\pi_n(RP_{\{n\}})$ for all $n\in d\mZ$ with $[u]^0$ being the multiplicative unit. The isomorphism in the proposition is then defined by the composite homomorphisms
\[
\Sigma^n\pi_*(R\wedge K_{n+})\xr{[u]^{n/d}} \pi_*(RP_{\{n\}}\wedge R\wedge K_{n+})\to \pi_*(RP_{\{n\}}\wedge K_{n+}),
\] 
where the first homomorphism is exterior multiplication by $[u]^{n/d}$ and the second is induced by the multiplication in $RP$. These isomorphisms only depend on the choice of $[u]$, hence are compatible and define a multiplicative isomorphism when $K$ is a graded monoid.
\end{proof}

\begin{corollary}
With $R$ and $X$ as in Corollary~\ref{cor:D-multiplicative-Thom-iso}, there is an isomorphism of graded $\pi_*(R)$-algebras $R_*(\mS^{\cW}[X])\cong R_{\circledast}(X_{h\cW})$. \qed 
\end{corollary}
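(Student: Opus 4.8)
The plan is to deduce the isomorphism directly from the spectrum-level graded Thom isomorphism by applying homotopy groups. Corollary~\ref{cor:D-multiplicative-Thom-iso} supplies a chain of stable equivalences of graded $E_{\infty}$ $R$-algebras
\[
R\wedge\mS^{\cW}[X]\ \simeq\ RP\bigtriangleup X_{h\cW},
\]
and the claim will follow by applying $\pi_*(-)$ and unwinding the two sides.

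To unwind the left-hand side, recall that the underlying orthogonal spectrum of $\mS^{\cW}[X]$ is $\bigvee_{d\in\mZ}\mS^{\cW}[X_{\{d\}}]$, so that, the sphere spectrum being compact so that $\pi_*$ commutes with this wedge,
\[
R_*(\mS^{\cW}[X])=\pi_*\bigl(R\wedge\mS^{\cW}[X]\bigr)\cong\bigoplus_{d\in\mZ}\pi_*\bigl(R\wedge\mS^{\cW}[X_{\{d\}}]\bigr).
\]
On the right-hand side, the $d$th graded piece of $RP\bigtriangleup X_{h\cW}$ is $RP_{\{d\}}\wedge(X_{h\cW_{\{d\}}+})$. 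By Proposition~\ref{prop:SW-M-equivalence} applied to $W$ we have $RP_{\{d\}}=R\wedge_{\mS^{\cW}[W_{\{0\}}]}\mS^{\cW}[W_{\{d\}}]\simeq\Sigma^dR$ when $d$ lies in the periodicity subgroup $d\mZ\subseteq\mZ$, and $RP_{\{d\}}\simeq\ast$ otherwise; in the latter case $W_{\{d\}}$ is levelwise empty, since the connectedness of $\cW_{\{d\}}$ forces $\pi_0(W_{h\cW_{\{d\}}})$ to be empty whenever $d$ is not in the image of the degree map $\pi_0(W_{h\cW})\to\mZ$, and therefore $X_{\{d\}}$ and $X_{h\cW_{\{d\}}}$ are empty as well, so that both the $d$th summand above and the $d$th summand of $R_{\circledast}(X_{h\cW})$ vanish. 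For $d\in d\mZ$,
\[
\pi_*\bigl(RP_{\{d\}}\wedge(X_{h\cW_{\{d\}}+})\bigr)\cong\pi_*\bigl(\Sigma^dR\wedge(X_{h\cW_{\{d\}}+})\bigr)\cong\Sigma^dR_*(X_{h\cW_{\{d\}}}),
\]
and summing over $d$ identifies $\pi_*$ of $RP\bigtriangleup X_{h\cW}$ with $R_{\circledast}(X_{h\cW})$ as a graded $\pi_*(R)$-module. Composing with the isomorphism induced by the chain of Corollary~\ref{cor:D-multiplicative-Thom-iso}, each map of which is a stable equivalence and hence induces an isomorphism on $\pi_*$, yields the additive part of the statement.

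It remains to address multiplicativity. Since the chain in Corollary~\ref{cor:D-multiplicative-Thom-iso} consists of maps of $E_{\infty}$ $R$-algebras and $\pi_*(-)$ is lax symmetric monoidal from $R$-modules to $\pi_*(R)$-modules, the isomorphism above is automatically one of graded $\pi_*(R)$-algebras once $R_{\circledast}(X_{h\cW})$ is equipped with the product transported through the identification of the preceding paragraph. The only real content of the proof is then the routine verification that this transported product agrees with the evident one on $R_{\circledast}(X_{h\cW})$, namely the product assembled from the lax monoidal structure of the homology theory $R_*(-)$, the multiplication maps $X_{h\cW_{\{i\}}}\times X_{h\cW_{\{j\}}}\to X_{h\cW_{\{i+j\}}}$ coming from the $\cD$-algebra structure on $X_{h\cW}$ (which exists because $\cD$ is augmented over the Barratt--Eccles operad), and the canonical identifications $\Sigma^iR\wedge\Sigma^jR\simeq\Sigma^{i+j}R$. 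This is checked by unwinding the formula for the pairing on $\mS^{\cW}_{\mZ}[X]$ recorded in Section~\ref{subsec:graded-orthogonal} against the construction of $RP\bigtriangleup X_{h\cW}$ through the diagonal $\delta_W$ of \eqref{eq:W_thom-diagonal}; I expect this bookkeeping, rather than any conceptual point, to be the only mild obstacle.
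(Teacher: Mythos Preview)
Your proposal is correct and follows the same approach the paper intends: the corollary is stated with an immediate \qed, so its proof is meant to be nothing more than applying $\pi_*$ to the chain of stable equivalences of graded $E_\infty$ $R$-algebras from Corollary~\ref{cor:D-multiplicative-Thom-iso} and reading off the two sides, exactly as you do. Your extra care with the degrees outside the periodicity subgroup and with the multiplicative structure simply makes explicit what the paper leaves implicit.
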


\subsection{Orientability with respect to the Stiefel $\cW$-spaces $V$ and $\tilde V_{\even}$}
\label{subsec:orientability-stiefel-cW-sp} 
Now we specialize to orientations with respect to the Stiefel $\cW$-spaces $V$ and $\tilde V_{\even}$. These are of course closely related to orientations with respect to the Eilenberg--Mac Lane spectra $H\mZ/2$ and 
$H\mZ$. In the case of $V$, we first choose a cofibrant replacement $V^{\cof}$ and then we realize the 0th Postnikov section $\mS^{\cW}[V_{\{0\}}^{\cof}]\to H\mZ/2$ as a map of commutative orthogonal ring spectra (see e.g.\ \cite{Schwede-SymSp}).  Using the positive stable model structure on commutative $\mS^{\cW}[V_{\{0\}}^{\cof}]$-algebras, we may further assume that $H\mZ/2$ is  positive fibrant  and cofibrant as a commutative $\mS^{\cW}[V_{\{0\}}^{\cof}]$-algebra. Passing to graded units, we get a $\cW$-equivalence $V_{\{0\}}^{\cof}\to \GL_1^{\cW}(H\mZ/2)$. 
By Lemma~\ref{lem:VhW-BW-equivalence}, every \mbox{$\cW$-space} is orientable with respect to $V$ and hence with respect to $V^{\cof}$. 
This gives the graded Thom isomorphism
\[
H_*(\mS^{\cW}[X],\mZ/2)\cong H\mZ/2P_{\circledast}(X_{h\cW})= \bigoplus_{n\in \mZ}
\Sigma^nH_*(X_{h\cW_{\{n\}}},\mZ/2)
\]
for every $\mS^{\cW}$-good $\cW$-space $X$. 
Here $H\mZ/2P$ is the graded Eilenberg--Mac Lane spectrum 
$\mS^{\cW}[V^{\cof}]\wedge_{\mS^{\cW}[V_{\{0\}}^{\cof}]} H\mZ/2$ with 
$\pi_*(H\mZ/2P)=\mZ/2[t,t^{-1}]$ where $t$ is a generator of degree $|t|=1$. 

In the oriented case of $\tilde V_{\even}$, we choose a cofibrant replacement $\tilde V_{\even}^{\cof}$ and realize the 0th Postnikov section $\mS^{\cW}[\tilde V_{\{0\}}^{\cof}]\to H\mZ$ as a map of commutative orthogonal ring spectra making $H\mZ$ a positive fibrant and cofibrant $\mS^{\cW}[\tilde V_{\{0\}}^{\cof}]$-algebra. Passing to graded units, we now get a \mbox{$\cW$-equivalence} $\tilde V_{\{0\}}^{\cof}\to\GL^{\cW}_1(H\mZ)$. This shows that the condition for a \mbox{$\cW$-space} to be orientable with respect to $\tilde V_{\even}$  is compatible with the usual notion of orientability. In this case we get an integral graded Thom isomorphism
\[
H_*(\mS^{\cW}[X],\mZ)\cong H\mZ P_{\even\circledast}(X_{h\cW})=\bigoplus_{n\in \mZ}\Sigma^{2n}
H(X_{h\cW_{\{2n\}}},\mZ) 
\]
for every $\mS^{\cW}$-good $\cW$-space $X$ equipped with a $\tilde V_{\even}$-orientation.
Here $H\mZ P_{\even}$ denotes the evenly graded Eilenberg--Mac Lane spectrum $\mS^{\cW}[\tilde V_{\even}^{\cof}]\wedge_{\mS^{\cW}[\tilde V_{\{0\}}^{\cof}]}H\mZ$ with $\pi_*(H\mZ P_{\even})=\mZ[t,t^{-1}]$ where $t$ is a generator of degree $|t|=2$. 

\subsection{Orientability with respect to $\GL_1^{\cW}(R)$}\label{subsec:GL_1W(R)-orientations}
 Let $R$ be a positive fibrant commutative orthogonal ring spectrum and consider the positive fibrant commutative $\cW$-space monoid $\GL_1^{\cW}(R)$. As we explain now, orientations with respect to the sub commutative $\cW$-space monoid $\GL_1^{\cW}(R)_{\{0\}}$ are equivalent to orientations in the original (ungraded) sense of May, Quinn, Ray, and Tornehave~\cite{May-ring-spaces}. By definition, 
$\GL_1^{\cW}(R)_{\{0\}}$ is concentrated on the subcategory $\cW_{\{0\}}$ and equals $\GL_1^{\cW}(R)$ if and only if all the units in $\pi_*(R)$ have degree zero. In this way, orientations with respect to $\GL_1^{\cW}(R)$ extend the classical notion of orientations to the graded setting. 

Let $\Delta \colon \cV \to \cW$ be the canonical diagonal functor and let us write $\GL_1^{\cV}(R)$ for the pullback of $\GL_1^{\cW}(R)$ to a commutative $\cV$-space monoid.  The homotopy colimit $\GL_1^{\cV}(R)_{h\cV}$ is a model of the usual (ungraded) units of $R$ often denoted $\GL_1(R)$. (It follows from work of Lind~\cite{Lind-diagram} that $\GL_1^{\cV}(R)$ is equivalent to the analogous construction for symmetric spectra denoted $\GL_1(R)$ in \cite{Schlichtkrull-units} and also to the earlier notion of units introduced in \cite{May-ring-spaces}.)
 In order to compare the notions of units represented by $\GL_1^{\cV}(R)$ and $\GL_1^{\cW}(R)_{\{0\}}$, we consider the functor $O\colon \cV\to\Top$ from Section~\ref{subsec:Grothendieck-construction} and equip it with the structure of a commutative \mbox{$\cV$-space} monoid under direct sum. Composing the canonical map $O\to \GL_1^{\cV}(\mS)$ with the map induced by the multiplicative unit $\mS\to R$, we get a map of commutative \mbox{$\cV$-space} monoids $O\to\GL_1^{\cV}(R)$. The map of homotopy colimits $O_{h\cV}\to\GL_1^{\cV}(R)_{h\cV}$ is therefore a map of grouplike topological monoids (in fact $E_{\infty}$ spaces). In the following we shall implicitly assume that the multiplicative unit makes $R_0$ a well-based topological monoid so that also $\GL_1^{\cV}(R)_{h\cV}$ is well-based. The following proposition gives an explicit realization of the homotopy fiber sequence \eqref{eq:BO-BGL(R)-sequence} from the introduction. 

\begin{proposition}
The homotopy fiber of the map $B(O_{h\cV})\to B(\GL_1^{\cV}(R)_{h\cV})$ is weakly equivalent to $\GL_1^{\cW}(R)_{h\cW_{\{0\}}}$.
\end{proposition}
\begin{proof}
First we observe that there is a homotopy cartesian pullback diagram 
\[
\xymatrix@-1pc{
B(\GL_1^{\cV}(R)_{h\cV},O_{h\cV},*) \ar[r] \ar[d]& B(*,O_{h\cV},*)\ar[d] \\
B(\GL_1^{\cV}(R)_{h\cV},\GL_1^{\cV}(R)_{h\cV},*) \ar[r] & B(*,\GL_1^{\cV}(R)_{h\cV},*)
}
\]
where the horizontal maps are quasifibrations by \cite{May-classifying}*{Theorem~7.6}. It follows that the homotopy fiber of the map in the proposition is weakly equivalent to $B(\GL_1^{\cV}(R)_{h\cV},O_{h\cV},*)$. Secondly, we use that 
$\GL_1^{\cW}(R)_{h\cW_{\{0\}}}$ is weakly equivalent to $\hocolim_{n\in \cV}\GL_1^{\cW}(R)(n,n)_{hO(n)}$ by Proposition~\ref{prop:Grothendieck-hocolim-equivalence}. Recall that $\GL_1^{\cW}(R)(n,n)$ denotes the union of the path components in $\Omega^n(R_n)$ that represent units in $\pi_0(R)$; for the sake of the proof we introduce the shorter notation $\Omega^n(R_n)^*$ for this subspace. The homotopy orbit space $\GL_1^{\cW}(R)(n,n)_{hO(n)}$ is with respect to the left $O(n)$-action in the second variable and it is not difficult to see that this is homeomorphic to the bar construction $B(\Omega^n(R_n)^*,O(n),*)$ where now $O(n)$ acts from the right on  $\Omega^n(R_n)^*$ by precomposition. Now define a weak homotopy equivalence
\[
B(\GL_1^{\cV}(R)_{h\cV},O_{h\cV},B\cV) \xr{\sim}
\hocolim_{n\in\cV}B(\Omega^n(R_n)^*,O(n),*)
\] 
by passing to the geometric realization of the  simplicial map that in simplicial degree $k$ is induced by the obvious natural transformation of $\cV^{k+2}$-diagrams
\[
\Omega^{n_0}(R_{n_0})^*\times O(n_1)\times \dots\times O(n_k)\to \Omega^{n}(R_{n})^*\times O(n)\times \dots\times O(n)
\]
where, for an object $(n_0,\dots, n_{k+1})$ in $\cV^{k+2}$, we set $n=n_0\oplus\dots\oplus n_{k+1}$. 
Since $B\cV$ is contractible, the left hand side of the above equivalence is equivalent to 
$B(\GL_1^{\cV}(R)_{h\cV},O_{h\cV},*)$ which gives the result.
\end{proof}
It follows from the proposition that $\GL_1^{\cW}(R)_{h\cW_{\{0\}}}$ is equivalent to the classifying space for $R$-oriented stable vector bundles introduced in~\cite{May-ring-spaces}*{IV.3}. Given a $\cW$-space $X$ equipped with an orientation $X\xl{\sim}X'\to\GL_1^{\cW}(R)$, this implies that the associated ``stable vector bundle'' $X_{h\cW_{\{0\}}}\to B\cW_{\{0\}}\simeq BO$ is $R$-oriented in the sense of \cite{May-ring-spaces}. See also the discussion in Section~\ref{subsec:space-level-Thom-iso} below.

\subsection{Orientations and graded Thom isomorphisms for spaces over $\Gr_{h\cV}$}
\label{subsec:space-level-Thom-iso}
The orientation theory for $\cW$-spaces is extended to spaces over $\Gr_{h\cV}$ via the Quillen equivalences
\[
\Top/\Gr_{h\cV}\simeq \Top^{\cV}/\Gr\simeq \Top^{\cW}/V\simeq \Top^{\cW}
\]
resulting from Propositions~\ref{prop:rho-p-Quillen-equivalence} and \ref{prop:TopV/Gr-adjunctions} and the fact that the projection of $V$ onto the terminal $\cW$-space is an acyclic $\cW$-fibration by Lemma~\ref{lem:VhW-BW-equivalence}. 
Thus, for $W$ a grouplike and positive fibrant  commutative \mbox{$\cW$-space} monoid, we say that a map of spaces $f\colon K\to \Gr_{h\cV}$ is \mbox{$W$-orientable} if the corresponding $\cW$-space admits a $W$-orientation in the sense of Definition~\ref{def:W-orientation}. If we assume in addition that $W$ is cofibrant, then this can be expressed more explicitly: Using 
Proposition~\ref{prop:V-homotopy-invariance}, one can show that such a map $f\colon K\to \Gr_{h\cV}$ is \mbox{$W$-orientable} if it is homotopic to a map that factors though the map $(\rho_*W)_{h\cV} \to \Gr_{h\cV}$ 
induced by a lift $W \to V$. 
(Here $(\rho_*W)_{h\cV} \simeq W_{h\cW}$ by the argument in Section~\ref{subsec:Thom-from-continous}.) 
In the situation of Theorem~\ref{thm:R-Thom-iso} we then get a chain of stable equivalences
\[
R\wedge T(f)\simeq RP\bigtriangleup K,
\] 
where we view $K$ as a $\mZ$-graded space with $n$th term $K_{\{n\}}=f^{-1}(\Gr_{h\cW_n})$. It follows from Theorems~\ref{thm:D-algebra-Thom-functor} and \ref{thm:mult-R-Thom-iso} that this is multiplicative in the appropriate sense: If $f$ is a map of  
$\cD$-algebras for an operad augmented over the Barratt--Eccles operad, then the stable equivalences in question are maps of $\cD$-algebras. On the level of stable homotopy groups we get an isomorphism
\[
R_*(T(f))\cong R_{\circledast}(K)=\bigoplus_{n\in \mZ}\Sigma^nR_*(K_{\{n\}}).
\]
This applies in particular for $R=H\mZ$ provided that $f$ is homotopic to a map that factors through 
$\tilde \Gr_{h\cV}$.

\section{Applications to topological logarithmic structures}
\label{sec:log-structures}
The notion of a pre-log structure on a commutative symmetric ring spectrum was introduced by Rognes~\cite{Rognes-TLS} in an ungraded version and extended to a graded version in \cite{Sagave-S_diagram}, see also~\cites{Sagave_log-on-k-theory, RSS_LogTHH-I}. 
The analogous definition in the setting of orthogonal spectra reads as follows: A \emph{pre-log structure} on a commutative orthogonal ring spectrum $R$ is given by a commutative $\cW$-space monoid $M$ and a map of commutative \mbox{$\cW$-space} monoids $M\to \Omega^{\cW}(R)$. By construction, the graded units $\GL_1^{\cW}(R)$ is a sub commutative $\cW$-space monoid of $\Omega^{\cW}(R)$ and we say that $\alpha\colon M\to  \Omega^{\cW}(R)$ is a \emph{log structure} if the induced map $\alpha^{-1}(\GL_1^{\cW}(R))\to \GL_1^{\cW}(R)$ is a $\cW$-equivalence. The inclusion $GL_1^{\cW}(R)\to \Omega^{\cW}(R)$ defines the \emph{trivial log structure} on $R$. As in the setting of symmetric spectra \cite{RSS_LogTHH-I}*{Construction~4.22}, every pre-log structure $\alpha\colon M\to  \Omega^{\cW}(R)$ has an associated log structure $\alpha^a\colon M^a\to  \Omega^{\cW}(R)$, where $M^a$ is defined as the homotopy pushout of the diagram
\[
M\leftarrow \alpha^{-1}(\GL_1^{\cW}(R))\to \GL_1^{\cW}(R)
\]
of commutative $\cW$-space monoids. The notion of a pre-log structure is placed in the context of orientation theory as follows.

\begin{proposition}\label{prop:pre-log-orientation}
A pre-log structure $\alpha\colon M\to  \Omega^{\cW}(R)$ gives rise to a $\GL_1^{\cW}(R)$-orientation of $M$ if and only if the associated log structure is $\cW$-equivalent to the trivial log structure on $M$.
\end{proposition}
\begin{proof}
If $\alpha\colon M\to  \Omega^{\cW}(R)$ factors through $\GL_1^{\cW}(R)$ (and hence defines a 
$\GL_1^{\cW}(R)$-orientation), then $M=\alpha^{-1}(\GL_1^{\cW}(R))$ so the logification $M^a$ is $\cW$-equivalent to the trivial log structure. On the other hand, if $M^a$ is $\cW$-equivalent to the trivial log structure, then $M^a$ is grouplike so $\alpha^a$ and hence $\alpha$ factors through $\GL_1^{\cW}(R)$.
\end{proof}

From the perspective of pre-log structures $\alpha\colon M\to  \Omega^{\cW}(R)$, the main interest is usually in the part of $M$ that does not map to $\GL_1^{\cW}(R)$. 

In the remainder of this subsection we show how our orientation theory for \mbox{$\cW$-spaces} can be used to derive the results needed for the calculations of logarithmic topological Hochschild homology in \cite{RSS_LogTHH-II}. In the setting of symmetric spectra, the theory of pre-log structures  is formulated in terms of $\cJ$-spaces, where $\cJ$ denotes Quillen's localization construction $\Sigma^{-1}\Sigma$ on the category $\Sigma$ with objects the finite sets $\bld n=\{1,\dots,n\}$ and morphisms the bijections. The category of $\cJ$-spaces $\Top^{\cJ}$ is studied extensively in~\cite{Sagave-S_diagram}*{Section 4}, and it shown in \cite{Sagave-S_diagram}*{Proposition 4.23} that the latter category is related to the category of symmetric spectra $\Spsym$ by an adjunction $\mS^{\cJ}\colon \Top^{\cJ}\rightleftarrows \Spsym\colon \Omega^{\cJ}$ that is analogous to the  $(\mS^{\cW},\Omega^{\cW})$ adjunction from Proposition~\ref{prop:S-Omega-adjunction}. 

Let $\Psi\colon \Sigma \to \mathcal O$ be the strict symmetric monoidal functor sending a permutation to the associated isometry and let us use the same notation for the induced functor $\Psi \colon \cJ \to \cW$. We will use the notation $(\Psi_*,\Psi^*)$ for both the resulting adjunction relating $\cJ$- and $\cW$-spaces and the resulting adjunction relating symmetric and orthogonal spectra. These functors fit into a diagram of adjunctions
\begin{equation}\label{eq:orth-sym-adjunctions}
\xymatrix@-.5pc{ \Top^{\cJ} \ar@<.4ex>[rr]^{\Psi_*}
  \ar@<-.4ex>[d]^{~\Omega^{\cJ}}& &
  \Top^{\cW}\ar@<.4ex>[ll]^{\Psi^*} \ar@<-.4ex>[d]^{~\Omega^{\cW}}\\
  \Spsym \ar@<.4ex>[rr]^{\Psi_*} \ar@<-.4ex>[u]^{\mS^{\cJ}~}& &
  \SpO\ar@<.4ex>[ll]^{\Psi^*}  \ar@<-.4ex>[u]^{\mS^{\cW}~}}
\end{equation}
where the square of right adjoints commutes and the square of left adjoints commutes up to isomorphism. Since all left adjoints are strong symmetric monoidal functors, there are corresponding squares of adjunctions for the associated categories of algebras over an operad $\cD$. Moreover, all adjunctions are Quillen adjunctions with respect to the absolute and positive model structures on the respective categories. As in the case of $\mS^{\cW}$, there is a notion of 
\emph{$\mS^{\cJ}$-good $\cJ$-spaces} on which $\mS^{\cJ}$ captures the homotopy type of the left derived functor. Useful criteria for $\mS^{\cJ}$-goodness are developed in~\cite[Section~8]{RSS_LogTHH-I}.

\begin{remark} Although the lower horizontal adjunction in~\eqref{eq:orth-sym-adjunctions} is a Quillen equivalence by \cite[Theorem 10.4]{MMSS}, this is not true for the upper horizontal adjunction relating $\cJ$- and $\cW$-spaces. 
Indeed, there are compatible Quillen equivalences $\Top^{\cJ}\simeq\Top/B\cJ$ and  $\Top^{\cW}\simeq\Top/B\cW$ (compare \cite{Sagave-S_diagram}*{Theorem~4.9}) under which $(\Psi_*,\Psi^*)$ corresponds to the adjunction induced by the canonical map $B\cJ \to B\cW$, and the latter adjunction is not a Quillen equivalence. Applying $\mS^{\cJ}$ to the $\cJ$-space associated to a map $K\to B\cJ$ gives a model of the graded Thom spectrum of the composition $K\to B\cJ\to B\cW\simeq BO\times \mZ$. 
\end{remark}

Returning to the setup of Section~\ref{sec:graded-Thom-iso}, let $W$ be a grouplike commutative 
\mbox{$\cW$-space} monoid that is cofibrant and fibrant in the positive model structure on commutative $\cW$-space monoids. Then we say that a $\cJ$-space $X$ is \emph{$W$-orientable} if the left derived functor of $\Psi_*$ takes $X$ to a $W$-orientable $\cW$-space in the sense of Definition~\ref{def:W-orientation}. For use in \cite{RSS_LogTHH-II} and elsewhere, we wish to formulate a Thom isomorphism for commutative $\cJ$-space monoids entirely in terms of symmetric spectra. 
First we again fix a map of commutative orthogonal ring spectra $\mS^{\cW}[W_{\{0\}}]\to R$ making R a cofibrant $\mS^{\cW}[W_{\{0\}}]$-algebra. Then let $R^{\mathrm{cof}} \to \Psi^{*}(R)$ be a cofibrant replacement of the
underlying commutative symmetric ring spectrum of $R$, and let us denote the underlying commutative symmetric ring spectrum of $RP$ also simply by $RP$.

\begin{theorem}\label{thm:mult-thom-iso-CSJ}
  Given a diagram $M \xleftarrow{\sim} M' \to \Psi^*(W)$ of commutative $\cJ$-space monoids in which $M$ is $\mS^{\cJ}$-good, there is a natural chain of stable equivalences
  \[
  R^{\mathrm{cof}} \sm \mS^{\cJ}[M] \simeq RP \triangle M_{h\cJ}
  \]
  of $E_{\infty}$ symmetric ring spectra. 
 \end{theorem}
\begin{proof}
Applying first the bar resolution and then $\Psi_*$ to the diagram in the theorem, we get an $E_{\infty}$ orientation of $\Psi_*(\overline M)$ of the form  $\Psi_*(\overline M)\xl{\sim} \Psi_*(\overline {M'}) \xr{\sim} W$ where again $E_{\infty}$ refers to actions by the Barratt--Eccles operad. Here we observe that $\Psi_*(\overline M)$ can be identified with the homotopy left Kan extension $\Psi_*^h(M)$. Using Corollary~\ref{cor:D-multiplicative-Thom-iso} we get a chain of natural stable equivalences 
\[
R\wedge\mS^{\cW}[\Psi_*(\overline M)]\simeq RP\triangle \Psi_*(\overline M)_{h\cW}
\]
of $E_{\infty}$ orthogonal ring spectra. On the one hand,  the canonical weak homotopy equivalence $\Psi_*^h(M)_{h\cW}\to M_{h\cJ}$ from \cite[Theorem 5.5]{Hollender-V_modules} shows that the underlying symmetric spectrum of the right hand side of the above equivalence is stably equivalent to the right hand side of the stable equivalence in the theorem. On the other hand, the fact that the lower horizontal $(\Psi_*,\Psi^*)$ adjunction in \eqref{eq:orth-sym-adjunctions} is a Quillen equivalence shows that there is a stable equivalence
\[
\Psi_*(R^{\cof}\sm \mS^{\cJ}[\overline M]) \iso \Psi_*(R^{\cof})\sm \mS^{\cW}[\Psi_*(\overline M)] \xrightarrow{\sim} R \sm \mS^{\cW}[\Psi_*(\overline M)]
\]
and hence that the underlying symmetric spectrum of the left hand side of the above equivalence is stably equivalent to 
$R^{\cof}\sm \mS^{\cJ}[\overline M]$. Finally, the latter is stably equivalent to $R^{\cof}\sm \mS^{\cJ}[M]$ by the $\mS^{\cJ}$-goodness assumption on $M$ and the $\cJ$-space analogue of Proposition~\ref{cor:bar-replacement-goodness-criterion}.
\end{proof}

Finally, we consider an application of Theorem~\ref{thm:mult-thom-iso-CSJ} that plays a prominent role in 
\cite{RSS_LogTHH-II}. The work on log structures 
%and logarithmic topological Hochschild homology 
in that paper is carried out in the setting of 
$\cJ$-spaces and symmetric spectra of simplicial sets as opposed to topological spaces. We shall use the Quillen equivalences given by geometric realization and the singular complex functor to relate the simplicial and topological versions of the theory, 
cf.\ \cite{Sagave-S_diagram}*{Remarks~6.7 and 9.7}. 

Given an object $(\bld{n_1},\bld{n_2})$ of $\cJ$, we adopt the notation from \cite{RSS_LogTHH-I}*{Example~2.4} and write 
$\mathbb C\langle \bld{n_1},\bld{n_2}\rangle$ for the free commutative
$\cJ$-space monoid on a generator in bidegree $(\bld{n_1},\bld{n_2})$. Working in the simplicial setting, the first author has constructed a group completion in the form of a map of commutative $\cJ$-space monoids 
$\mathbb C\langle \bld{n_1},\bld{n_2}\rangle\to \mathbb C\langle \bld{n_1},\bld{n_2}\rangle^{\gp}$
with the universal property that given a grouplike and positive fibrant commutative $\cJ$-space monoid $N$, any map of commutative $\cJ$-space monoids $\mathbb C\langle \bld{n_1},\bld{n_2}\rangle\to N$ extends to $\mathbb C\langle \bld{n_1},\bld{n_2}\rangle^{\gp}$, see \cite{Sagave_spectra-of-units}.

\begin{proposition}
Let $(\bld{n_1},\bld{n_2})$ be an object of $\cJ$ with $n_1\geq 1$ and $n_2-n_1$ even. Given a map of commutative $\cJ$-space monoids $M\to \mathbb C\langle \bld{n_1},\bld{n_2}\rangle^{\gp}$ in the simplicial setting, there is a chain of natural stable equivalences 
\[
H\mZ\wedge \mS^{\cJ}[M]\simeq H\mZ P_{\even}\triangle M_{h\cJ}
\] 
of $E_{\infty}$ symmetric ring spectra. 
\end{proposition}
In this statement, the symmetric spectra in question may be interpreted either in the simplicial setting or, by passing to the geometric realization of $M$, in the topological setting. 
 
 \begin{proof}
 We apply Theorem~\ref{thm:mult-thom-iso-CSJ} and set $W$ equal to $\Tilde V_{\mathrm{ev}}^{\cof}$ and $R$ equal to $H\mZ$ as in Section~\ref{subsec:orientability-stiefel-cW-sp}. Since $\mathbb C\langle \bld{n_1},\bld{n_2}\rangle^{\gp}$ is defined in the simplicial setting, we first choose an element of $\Tilde V_{\mathrm{ev}}^{\cof}$ in bidegree $(n_1,n_2)$ and consider the associated map of commutative $\cJ$-space monoids $\mathbb C\langle \bld{n_1},\bld{n_2}\rangle\to \mathrm{sing}(\Psi^*(\Tilde V_{\mathrm{ev}}^{\cof}))$ where $\mathrm{sing}(-)$ denotes the singular complex functor. Checking from the definitions, we find that $\pi_0\Psi^*(\Tilde V_{\mathrm{ev}})_{h\cJ}\cong2\mZ$, so
 $\mathrm{sing}(\Psi^*(\Tilde V_{\mathrm{ev}}^{\cof}))$ is grouplike and we can use the universal property of the group completion to fix an extension $\mathbb C\langle \bld{n_1},\bld{n_2}\rangle^{\gp}\to \mathrm{sing}(\Psi^*(\Tilde V_{\mathrm{ev}}^{\cof}))$. The adjoint map defines a $\Psi^*(\Tilde V_{\mathrm{ev}}^{\cof})$-orientation of the geometric realization of $\mathbb C\langle \bld{n_1},\bld{n_2}\rangle^{\gp}$ and hence an orientation of the geometric realization of $M$ by precomposition. The topological version of the statement now follows since $\mathbb C\langle \bld{n_1},\bld{n_2}\rangle^{\gp}$ being cofibrant implies that $M$ is $\cJ$-good, cf.\  \cite{RSS_LogTHH-I}*{Corollary~8.5 and Lemma~8.6}. To get the simplicial version of the statement we apply the singular complex functor. 
 \end{proof}
 
\subsection{Logarithmic topological Hochschild homology} \label{subsec:logTHH}
 The logarithmic topological Hochschild homology (in short log THH) introduced in \cites{RSS_LogTHH-I, RSS_LogTHH-II}  is an extension of the ordinary topological Hochschild homology $\THH(R)$ of a commutative symmetric ring spectrum $R$ to a theory that takes commutative symmetric ring spectra equipped with pre-log structures as input. Given a pre-log structure defined by a commutative $\cJ$-space monoid $M$ and a map of commutative $\cJ$-space monoids $M\to \Omega^{\cJ}(R)$, the associated log THH is the commutative symmetric ring spectrum
 \[
 \THH(R,M)=\THH(R)\wedge_{\mS^{\cJ}[B^{\cy}(M)]}\mS^{\cJ}[B^{\rep}(M)]
 \]
 where $B^{\cy}(M)$ denoted the cyclic bar construction on $M$ and $B^{\rep}(M)$ is a certain extension known as the \emph{replete bar construction}, see \cite{RSS_LogTHH-I}*{Section~3} for details. Much of the focus in \cite{RSS_LogTHH-I} is on analyzing this construction in the case where $M$ is \emph{repetitive} in the sense that $M$ is equivalent to the non-negative part $N_{\geq0}$ (the union of the components $N_{\{n\}}$ with $n\geq0$) for a grouplike commutative $\cJ$-space monoid $N$ that is not concentrated in $\cJ$-degree $0$. In this case $B^{\rep}(M)$ can be identified with the non-negative part $B^{\cy}(N)_{\geq0}$ of the cyclic bar construction on $N$. This is used in 
 \cite{RSS_LogTHH-I}*{Section~6} to set up a homotopy cofiber sequence relating $\THH(R)$ and $\THH(R,M)$ when $M$ is repetitive. In the special case $R=\mS^{\cJ}[M]$ with the canonical pre-log structure of $M$, we can identify $\THH(\mS^{\cJ}[M])$ with $\mS^{\cJ}[B^{\cy}(M)]$ and $\THH(\mS^{\cJ}[M],M)$ with $\mS^{\cJ}[B^{\cy}(N)_{\geq0}]$. 
 
While we do not intend to set up a complete theory of log THH for pre-log structures on commutative orthogonal ring spectra in the present paper, we do want to point out that there is an analogous notion of a repetitive commutative $\cW$-space monoid $M$: Given a grouplike and cofibrant commutative $\cW$-space monoid $N$ that is not concentrated in $\cW$-degree $0$, we set $M=N_{\geq0}$, identify $\THH(\mS^{\cW}[M])$ with $\mS^{\cW}[B^{\cy}(M)]$, and define 
 $\THH(\mS^{\cW}[M],M)$ to be $\mS^{\cW}[B^{\cy}(N)_{\geq0}]$. The arguments in \cite{RSS_LogTHH-I}*{Section~6} (see also \cite{RSS_LogTHH-II}*{Remark~3.4}) then give a homotopy cofiber sequence
 \[
\THH(\mS^{\cW}[M_{\{0\}}]) \to \THH(\mS^{\cW}[M])\to \THH(\mS^{\cW}[M],M).
 \]
 Notice that for this construction it is not even necessary that $N$ be strictly commutative; for instance, the construction also applies to the $E_{\infty}$ $\cW$-space monoid $V_F$ associated to a strict symmetric monoidal functor $F\colon \cK\to\cW$ as in Section~\ref{subsec:Thom-from-continous} provided that $B\cK$ is grouplike. 
\begin{example}\label{ex:logTHHMUP-sequence}
Let $\cU^{-1}\cU\to\cW$ be the strict symmetric monoidal functor considered in Example~\ref{ex:MUP} and let us write $V^{\cU}$ for the associated $E_{\infty}$ $\cW$-space monoid. Then $\mS^{\cW}[V^{\cU}]$ is a model of the periodic complex cobordism spectrum $\MUP$ and $\mS^{\cW}[V^{\cU}_{\geq 0}]$ is a model of $\MUP_{\geq0}$. In this case we get a homotopy cofiber sequence
\[
\THH(\MU) \to \THH(\MUP_{\geq0})\to \THH(\MUP_{\geq0},V^{\cU}_{\geq 0})
\] 
and the $\cW$-space analogue of the analysis in \cite{RSS_LogTHH-II}*{Section~2} shows that the terms can be identified as follows:
\[
\begin{aligned}
&\THH(\MU)\simeq \MU\wedge SU_+\\
&\THH(\MUP_{\geq0})\simeq \MU\wedge SU_+\vee \MUP_{>0}\wedge U_+\\
&\THH(\MUP_{\geq0},V^{\cU}_{\geq 0})\simeq \MUP_{\geq0}\wedge U_+
\end{aligned}
\]
We observe that the multiplicative structure of the log THH term is somewhat simpler than that of $\THH(\MUP_{\geq0})$. In comparison $\THH(\MUP)\simeq \MUP\wedge U_+$.
\end{example}

\appendix
\section{Criteria for \texorpdfstring{$\mS^{\cW}$}{S^W}-goodness}\label{app-sec:SW-goodness}
In this appendix we establish some convenient criteria which ensure that a \mbox{$\cW$-space} is $\mS^{\cW}$-good in the sense of Definition~\ref{def:SW-good}. A similar analysis for $\cJ$-spaces is carried out in~\cite[Section~8]{RSS_LogTHH-I}.

Consider in general a compact Lie group $G$ and let us write $\Top^{G}$ for the category of left $G$-spaces. It is well-known that this category admits a \emph{coarse} (or ``naive'') model structure in which a map is a weak equivalence or fibration if and only if the underlying map of spaces is. We say that a map of 
$G$-spaces is a \emph{$G$-cofibration} if it is a cofibration in the coarse model structure and a $G$-space is 
\emph{$G$-cofibrant} if it is cofibrant in this model structure. It is clear that the orbit space/trivial action adjunction $\Top^{G}\rightleftarrows \Top$ is a Quillen adjunction when $\Top^{G}$ is equipped with the coarse model structure. This implies in particular that a weak equivalence of 
$G$-spaces $X\to Y$ induces a weak equivalence of the orbit spaces $X/G\to Y/G$ provided that $X$ and $Y$ are 
$G$-cofibrant.  

There is also a \emph{fine} model structure on $\Top^G$ in which a map is a weak equivalence (respectively a fibration) if and only if the induced map of $H$-fixed points is a weak equivalence (respectively a fibration) for all closed subgroups $H$ in $G$ (this is also known as the ``genuine'' model structure). This model structure has more cofibrations than the coarse model structure and in particular any $G$-equivariant CW complex is cofibrant. 
The next lemma is the $G$-version of the analogous result for finite groups proved in \cite[Lemma~12.10]{Sagave-S_diagram}. 

\begin{lemma}\label{lem:coarse-pushout-product}
Let $f\colon X_1\to X_2$ and $g\colon Y_1\to Y_2$ be respectively a cofibration in the coarse and the fine model structure on $\Top^G$. Then the pushout-product
\[
f\Box g\colon X_1\times Y_2\cup_{X_1\times Y_1}X_2\times Y_1\to X_2\times Y_2
\]
is also a $G$-cofibration. \qed
\end{lemma}
As a consequence of the lemma we see that if $X$ is $G$-cofibrant and $Y_1\to Y_2$ is a cofibration in the fine model structure, then 
$X\times Y_1\to X\times Y_2$ is a $G$-cofibration (since $\emptyset\to X$ is a $G$-cofibration).

\begin{definition}\label{def:O-cof-second-var}
A $\cW$-space $X$ is \emph{$O$-cofibrant in the second variable} if for every object $(n_1,n_2)$, the ($O(n_1)\times O(n_2)$)-space $X(n_1,n_2)$ becomes $O(n_2)$-cofibrant when restricting the action to $O(n_2)$.
\end{definition}
\begin{lemma}\label{lem:O-cofibrant-level-equivalence}
Let $X\to Y$ be a level equivalence of $\cW$-spaces that are $O$-cofibrant in the second variable. Then $\mS^{\cW}[X]\to \mS^{\cW}[Y]$ is a level equivalence.
\end{lemma}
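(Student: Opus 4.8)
The plan is to reduce the claim, by the formula in Proposition~\ref{prop:graded-SW-formula}, to a level-wise comparison of orbit spaces under $O(d+n)$-actions and then to apply the coarse-model-structure machinery assembled in Lemmas~\ref{lem:coarse-pushout-product} and~\ref{lem:O-cofibrant-level-equivalence}'s hypotheses. Since $\mS^{\cW}[X]$ decomposes over degrees as the $\mS^{\cW}[X_{\{d\}}]$, it suffices to treat a single degree $d\in\mZ$. Fix a spectrum level $n\geq 0$ with $d+n\geq 0$ (the other levels are the terminal space on both sides, hence trivially equivalent). By Proposition~\ref{prop:graded-SW-formula} we must show that the map
\[
X(n,d+n)_+\wedge_{O(d+n)}S^{d+n}\to Y(n,d+n)_+\wedge_{O(d+n)}S^{d+n}
\]
is a weak equivalence, given that $X(n,d+n)\to Y(n,d+n)$ is a weak equivalence of spaces.

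First I would record that the sphere $S^{d+n}$, with its standard $O(d+n)$-action, is cofibrant in the \emph{fine} model structure on $\Top^{O(d+n)}$ — it is an $O(d+n)$-equivariant CW complex (the one-point compactification of the standard representation, built from equivariant cells), as noted in the text preceding Lemma~\ref{lem:coarse-pushout-product}. Second, the hypothesis that $X$ and $Y$ are $O$-cofibrant in the second variable says precisely that $X(n,d+n)_+$ and $Y(n,d+n)_+$ are cofibrant in the \emph{coarse} model structure on $\Top^{O(d+n)}$ (adding a disjoint basepoint to a coarsely cofibrant $O(m)$-space keeps it coarsely cofibrant, since $\emptyset\to K_+$ is built from coarse cells). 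Applying the consequence of Lemma~\ref{lem:coarse-pushout-product} spelled out right after it — that the product of a coarsely cofibrant object with a fine-cofibrant object is an $O(d+n)$-cofibration over the terminal object — we conclude that $X(n,d+n)_+\times S^{d+n}$ and $Y(n,d+n)_+\times S^{d+n}$ are $O(d+n)$-cofibrant in the coarse model structure, and that the map between them is a coarse weak equivalence (its underlying map of spaces is $X(n,d+n)\times S^{d+n}\to Y(n,d+n)\times S^{d+n}$, a weak equivalence since weak equivalences of spaces are closed under products with a fixed space).

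Third, since the orbit-space/trivial-action adjunction $\Top^{O(d+n)}\rightleftarrows\Top$ is a Quillen adjunction for the coarse model structure (stated in the text), a coarse weak equivalence between coarsely cofibrant objects induces a weak equivalence of orbit spaces. Passing to $O(d+n)$-orbits of the smash product — which is the quotient of the orbit space of the product (the identification pointed out after Proposition~\ref{prop:graded-SW-formula}) — then gives the desired level equivalence $\mS^{\cW}[X]_n\to\mS^{\cW}[Y]_n$ for every $n$, and hence $\mS^{\cW}[X]\to\mS^{\cW}[Y]$ is a level equivalence. I expect the only mild subtlety to be the bookkeeping that adding a disjoint basepoint preserves coarse cofibrancy and that ``cofibrant in $\Top^{O(n_2)}$ after forgetting the $O(n_1)$-action'' is exactly the input the pushout-product lemma wants; the homotopical content is entirely carried by Lemma~\ref{lem:coarse-pushout-product} and the coarse Quillen adjunction, so no genuine obstacle is anticipated.
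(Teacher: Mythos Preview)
Your approach is essentially the same as the paper's: reduce via Proposition~\ref{prop:graded-SW-formula} to a single level, use Lemma~\ref{lem:coarse-pushout-product} to see that $X(n_1,n_2)\times S^{n_2}$ and $Y(n_1,n_2)\times S^{n_2}$ are coarsely $O(n_2)$-cofibrant, and conclude that the induced map of orbit spaces is a weak equivalence.

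There is one small gap in your final step. You write that passing to orbits of the smash product ``is the quotient of the orbit space of the product'' and that this immediately gives the desired equivalence. It is true that $X(n_1,n_2)_+\wedge_{O(n_2)}S^{n_2}$ is the quotient of $X(n_1,n_2)\times_{O(n_2)}S^{n_2}$ by the copy of $X(n_1,n_2)/O(n_2)$ sitting over the basepoint of $S^{n_2}$, but a weak equivalence of spaces does not in general descend to a weak equivalence of quotients. The paper handles this by applying Lemma~\ref{lem:coarse-pushout-product} once more, this time to the coarse cofibration $\emptyset\to X(n_1,n_2)$ and the fine cofibration $\{\infty\}\to S^{n_2}$, to see that the inclusion $X(n_1,n_2)/O(n_2)\hookrightarrow X(n_1,n_2)\times_{O(n_2)}S^{n_2}$ is a cofibration (and likewise for $Y$); then the map of cofibers is a weak equivalence. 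You should make this second use of the pushout-product lemma explicit.
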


\begin{proof}
By the explicit description of $\mS^{\cW}$ in Proposition~\ref{prop:graded-SW-formula}, the statement in the lemma is equivalent to the maps 
\[
X(n_1,n_2)_+\wedge_{O(n_2)}S^{n_2}\to Y(n_1,n_2)_+\wedge_{O(n_2)}S^{n_2}\
\]
being weak homotopy equivalences for all $(n_1,n_2)$. We know from 
Lemma~\ref{lem:coarse-pushout-product} that 
$X(n_1,n_2)\times S^{n_2}$ and $Y(n_1,n_2)\times S^{n_2}$ are $O(n_2)$-cofibrant and hence that the map of orbit spaces
\[
X(n_1,n_2)\times_{O(n_2)}S^{n_2}\to Y(n_1,n_2)\times_{O(n_2)}S^{n_2}
\]
is a weak homotopy equivalence. Since the inclusions of 
$X(n_1,n_2)/O(n_2)$ and $Y(n_1,n_2)/O(n_2)$ specified by the base point of $S^{n_2}$ 
are cofibrations by Lemma~\ref{lem:coarse-pushout-product}, we conclude that also the induced map of quotient spaces is a weak homotopy equivalence.
\end{proof}

\begin{lemma}\label{lem:W-cofibrant-O-cofibrant}
Every cofibrant $\cW$-space is $O$-cofibrant in the second variable. 
\end{lemma}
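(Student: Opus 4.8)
The plan is to reduce to the generating cofibrations and then check the claim explicitly. Since the class of $\cW$-spaces that are $O$-cofibrant in the second variable is closed under the relevant cell-attachment operations, it suffices to verify the statement for the cells themselves. The standard argument goes as follows: the property of being $O$-cofibrant in the second variable is preserved under coproducts, cobase change along injective maps, and sequential colimits of such, essentially because the coarse model structure on $\Top^{O(n_2)}$ has these closure properties and because all the constructions are computed levelwise at each object $(n_1,n_2)$. Hence every cofibrant $\cW$-space, being a retract of an $FI$-cell complex, will be $O$-cofibrant in the second variable as soon as the generating cofibrations $F^{\cW}_{(d_1,d_2)}(i)$ for $i\in I$ have this property, and retracts clearly preserve it as well.

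First I would therefore analyze the free $\cW$-spaces $F^{\cW}_{(d_1,d_2)}(K)$. By definition $F^{\cW}_{(d_1,d_2)}(K)(n_1,n_2)=\cW((d_1,d_2),(n_1,n_2))\times K$, so the $O(n_1)\times O(n_2)$-action is entirely through the left action on the morphism space $\cW((d_1,d_2),(n_1,n_2))$, with $K$ carrying the trivial action. Thus it is enough to show that, after forgetting the $O(n_1)$-action, the space $\cW((d_1,d_2),(n_1,n_2))$ is a cofibrant (indeed free, or at least CW) $O(n_2)$-space; then $\cW((d_1,d_2),(n_1,n_2))\times K$ is $O(n_2)$-cofibrant by the consequence of Lemma~\ref{lem:coarse-pushout-product} recorded after that lemma (the product of an $O(n_2)$-cofibrant space with any space is again $O(n_2)$-cofibrant, using the trivial-action cofibration $\emptyset\to K$), and the generating cofibrations $F^{\cW}_{(d_1,d_2)}(i)$ are levelwise $O(n_2)$-cofibrations by the same pushout-product principle.

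The heart of the matter, and the step I expect to be the main obstacle, is thus the equivariant analysis of the morphism space $\cW((d_1,d_2),(n_1,n_2))$ as an $O(n_2)$-space. Recall this space is
\[
\bigl(\cO(d_1\oplus m, n_1)\times \cO(d_2\oplus m, n_2)\bigr)/O(m),
\]
where $m=n_1-d_1=n_2-d_2$, and $O(n_2)$ acts by post-composition on the factor $\cO(d_2\oplus m,n_2)$. Here $\cO(d_2\oplus m,n_2)=O(n_2)$ is a free $O(n_2)$-space (a compact Lie group acting on itself by translation), and the $O(m)$-action is free and commutes with the $O(n_2)$-action; hence $\cO(d_2\oplus m,n_2)/O(m)=O(n_2)/O(m)$, which is a compact smooth manifold on which $O(n_2)$ acts smoothly, and is therefore an $O(n_2)$-CW complex, in particular $O(n_2)$-cofibrant in the fine model structure and a fortiori in the coarse one. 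The remaining factor $\cO(d_1\oplus m,n_1)$ carries a commuting free $O(m)$-action, so the quotient fibers over $O(n_2)/O(m)$ with fiber a Stiefel manifold; choosing an $O(n_2)$-CW structure on the base and using that the total space is a fiber bundle of smooth $O(n_2)$-manifolds, one concludes the total morphism space is an $O(n_2)$-CW complex. (Alternatively, one may invoke the fiber bundle description from Lemma~2.4 of the excerpt, which exhibits $\cW((d_1,d_2),(n_1,n_2))$ as a bundle over $\cV(d_1,n_1)\times\cV(d_2,n_2)$; smoothness of all structure maps makes the total space a smooth $O(n_2)$-manifold.) Forgetting down to $O(n_2)$, the space is then cofibrant in the coarse model structure because smooth compact $G$-manifolds are $G$-CW. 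Feeding this back through the cell-induction and retract arguments above completes the proof.
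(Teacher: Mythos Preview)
Your overall strategy---reduce to generating cofibrations and analyze the morphism spaces $\cW((d_1,d_2),(n_1,n_2))$ as $O(n_2)$-spaces---is exactly right and matches the paper. The cell-induction and retract arguments are fine. The gap is in the equivariant conclusion about the morphism space.

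You write that $O(n_2)/O(m)$ is ``$O(n_2)$-cofibrant in the fine model structure and a fortiori in the coarse one,'' and later that the morphism space is coarse-cofibrant ``because smooth compact $G$-manifolds are $G$-CW.'' Both inferences go the wrong way. As the paper records just above Lemma~\ref{lem:coarse-pushout-product}, the fine model structure has \emph{more} cofibrations than the coarse one, so fine-cofibrant does not imply coarse-cofibrant. Concretely, for $m\geq 1$ the $O(n_2)$-action on $O(n_2)/O(m)$ has isotropy conjugate to $O(m)$, so this homogeneous space is \emph{not} coarse-cofibrant. Illman's theorem gives a (genuine) $G$-CW structure, which is fine-cofibrant, but that is not what you need here.

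The fix is the single observation you came close to but did not make for the full quotient: the $O(n_2)$-action on $\bigl(O(n_1)\times O(n_2)\bigr)/O(m)$ is \emph{free}. Indeed, if $g\cdot[\sigma_1,\sigma_2]=[\sigma_1,\sigma_2]$ then $(\sigma_1,g\sigma_2)=(\sigma_1 h,\sigma_2 h)$ for some $h\in O(m)$; the first coordinate forces $h=1$ and hence $g=1$. A smooth manifold with a free smooth $O(n_2)$-action is a free $O(n_2)$-CW complex, which \emph{is} cofibrant in the coarse model structure. This is precisely the paper's argument: it notes the $O(n_2)$-action is free and concludes directly. Once you replace your fine-to-coarse step with this freeness observation, the rest of your proof goes through.
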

\begin{proof}
It is clear from the definition that $\cW((d_1,d_2),(n_1,n_2))$ is a smooth manifold with free $O(n_2)$-action and therefore a free $O(n_2)$-CW complex. This implies that in the appropriate sense, the generating cofibrations for the $\cW$-model structure on $\Top^{\cW}$ are $O$-cofibrations in the second variable.  The statement of the lemma follows from this since in general a cofibrant $\cW$-space is a retract of a cell complex constructed from the generating cofibrations. 
\end{proof}
\begin{proposition}\label{prop:O-cofibrant-SW-good}
Let $X$ be a $\cW$-space that is $O$-cofibrant in the second variable. Then $X$ is $\mS^{\cW}$-good.
\end{proposition}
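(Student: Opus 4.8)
The plan is to produce, for a $\cW$-space $X$ that is $O$-cofibrant in the second variable, a cofibrant replacement that remembers enough structure to keep $\mS^{\cW}$ a homotopy equivalence. The natural candidate is the bar resolution $\ovl X \to X$. First I would recall from Lemma~\ref{lem:bar-resolution-cofibrant} that $\ovl X$ is a cofibrant $\cW$-space as soon as each $X(n_1,n_2)$ is cofibrant in $\Top$, which follows here since an $O(n_2)$-cofibrant space is in particular cofibrant in $\Top$; and the evaluation map $\epsilon\colon \ovl X \to X$ is a level equivalence, hence certainly a $\cW$-equivalence. So the only thing to check is that $\mS^{\cW}[\ovl X]\to \mS^{\cW}[X]$ is a stable equivalence, and for that it suffices to show it is a level equivalence.

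To get the level equivalence I would invoke Lemma~\ref{lem:O-cofibrant-level-equivalence}: since $\epsilon$ is a level equivalence, it is enough to know that both $\ovl X$ and $X$ are $O$-cofibrant in the second variable. For $X$ this is the hypothesis. For $\ovl X$ I would argue that $\ovl X(n_1,n_2)$ is built, as a realization of a simplicial space, out of pieces of the form $\cW(\bld k_0,(n_1,n_2)) \times \cW(\bld k_1,\bld k_0)\times\cdots\times \cW(\bld k_p,\bld k_{p-1}) \times X(\bld k_p)$; the factor $\cW(\bld k_0,(n_1,n_2))$ is a free $O(n_2)$-CW complex exactly as in the proof of Lemma~\ref{lem:W-cofibrant-O-cofibrant}, so each simplicial level is $O(n_2)$-cofibrant (a product of a free $O(n_2)$-CW complex with a space with trivial $O(n_2)$-action, using Lemma~\ref{lem:coarse-pushout-product}), and the latching maps $SZ_{p-1}\to Z_p$ are $O(n_2)$-cofibrations, being applied cofibrations in the $\cW((\bld k_0),(n_1,n_2))$-factor. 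Hence the realization $\ovl X(n_1,n_2)$ is $O(n_2)$-cofibrant, as desired.

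The main obstacle, and the step deserving the most care, is precisely this last verification that the bar resolution stays $O$-cofibrant in the second variable: one has to track the $O(n_2)$-action through the geometric realization filtration used in Lemma~\ref{lem:homotopy-Kan-cofibrant} and confirm both that each $Z_p$ is $O(n_2)$-cofibrant and that the inclusions of degenerate sub-$\cW$-spaces are $O(n_2)$-cofibrations, using Lemma~\ref{lem:coarse-pushout-product} to handle products of free (coarse-cofibrant) and trivial pieces. Once that is in hand, the argument assembles formally: $\ovl X$ is cofibrant (Lemma~\ref{lem:bar-resolution-cofibrant}), $\epsilon$ is a $\cW$-equivalence, and $\mS^{\cW}[\epsilon]$ is a level and hence stable equivalence by Lemma~\ref{lem:O-cofibrant-level-equivalence}, so $X$ is $\mS^{\cW}$-good by definition.
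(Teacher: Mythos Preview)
Your approach is correct, but it takes a longer route than the paper's and in doing so creates an unnecessary ``main obstacle.''

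The paper simply picks an arbitrary cofibrant replacement $Y\to X$ given by an acyclic fibration (hence a level equivalence), observes via Lemma~\ref{lem:W-cofibrant-O-cofibrant} that \emph{every} cofibrant $\cW$-space is $O$-cofibrant in the second variable, and then applies Lemma~\ref{lem:O-cofibrant-level-equivalence} to conclude that $\mS^{\cW}[Y]\to\mS^{\cW}[X]$ is a level equivalence. That's the whole proof.

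Your choice of the bar resolution $\ovl X$ as the specific cofibrant replacement is fine, but having established that $\ovl X$ is cofibrant (via Lemma~\ref{lem:bar-resolution-cofibrant}), you could immediately invoke Lemma~\ref{lem:W-cofibrant-O-cofibrant} to get that $\ovl X$ is $O$-cofibrant in the second variable. Instead you propose to verify this by hand through the simplicial filtration, tracking $O(n_2)$-actions and latching maps. That argument would go through, but it amounts to reproving a special case of Lemma~\ref{lem:W-cofibrant-O-cofibrant} rather than citing it. The step you flag as ``deserving the most care'' can therefore be replaced by a one-line citation.
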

\begin{proof}
Let $Y$ be a cofibrant $\cW$-space and $Y\to X$ an acyclic fibration (that is, a cofibrant replacement of $X$). Then $Y\to X$ is a level equivalence and hence
$\mS^{\cW}[Y]\to \mS^{\cW}[X]$ is a level equivalence by Lemmas \ref{lem:O-cofibrant-level-equivalence} and \ref{lem:W-cofibrant-O-cofibrant}.  
\end{proof}

\begin{lemma}\label{lem:commutative-coarse-cofibrant}
Let $M$ be a commutative $\cW$-space monoid that is cofibrant in the model structure from Theorem~\ref{thm:positive-model-structure}. Then $M$ is $O$-cofibrant in the second variable.  
\end{lemma}
\begin{proof}
We may assume without loss of generality that $M$ is a cell complex constructed from the generating cofibrations. This means that there exists a $\lambda$-sequence 
$\{M_{\alpha}\colon \alpha<\lambda\}$ of commutative $\cW$-space monoids (for some ordinal 
$\lambda$) such that $M_0$ is the unit $U^{\cW}$ for the monoidal structure, 
$\colim_{\alpha<\lambda}M_{\alpha}=M$, and $M_{\alpha}\to M_{\alpha+1}$ is obtained by cobase change from a map of the form $\mathbb C(X_{\alpha})\to \mathbb C(Y_{\alpha})$, where $X_{\alpha}\to Y_{\alpha}$ is a generating cofibration for the positive model structure on $\Top^{\cW}$ and $\mathbb C$ denotes the monad associated to the commutative operad (see Section~\ref{subsec:mult-orientations}). 
 Here the colimit of the $M_{\alpha}$'s can be calculated in the underlying category of \mbox{$\cW$-spaces}, cf.\ ~ \cite[Lemma~9.2]{Sagave-S_diagram}.  
Since $U^{\cW}=\cW((0,0),-)$ is $O$-cofibrant in the second variable as observed in the proof of Lemma~\ref{lem:W-cofibrant-O-cofibrant}, it suffices to prove that the $(n_1,n_2)$th level of $M_{\alpha}\to M_{\alpha+1}$ is an $O(n_2)$-cofibration in the second variable for all $\alpha$. In order to set up an inductive argument we in fact prove the following stronger statement: For every object $(d_1,d_2)$ of $\cW$ and every subgroup $G$ of $\Sigma_{d_1}\times\Sigma_{d_2}$ with the property that the projection to $\Sigma_{d_1}$ is injective, the $(n_1,n_2)$th level of the map
\[
M_{\alpha}\boxtimes F_{(d_1,d_2)}^{\cW}(*)\to
M_{\alpha+1}\boxtimes F_{(d_1,d_2)}^{\cW}(*)
\] 
is an ($O(n_2)\times G$)-cofibration. (Here we implicitly convert the canonical right action of 
$\Sigma_{d_1}\times \Sigma_{d_2}$ on $F_{(d_1,d_2)}^{\cW}(*)$ to a left action by letting a group element act through its inverse.) 
To start we observe that with $(d_1,d_2)$ and $G$ as above, the $(n_1,n_2)$th level of $F_{(d_1,d_2)}^{\cW}(*)$ is ($O(n_2)\times G$)-cofibrant.
%action at level $(n_1,n_2)$ is free. This follows from the composition rule in $\cJ$ and implies that the $(\bld n_1,\bld n_2)$th level of 
%$F_{(\bld d_1,\bld d_2)}^{\cJ}(*)$ is ($\Sigma_{n_2}\times G$)-coarse-cofibrant. 
Now assume by induction that the above statement holds for all $\alpha<\beta$. Suppose that the generating cofibration $X_{\beta}\to Y_{\beta}$ has the form 
$F_{(m_1,m_2)}^{\cW}(j)$ with $m_1\geq 1$ and $j$ a generating cofibration for 
$\Top$. For the induction step we use the filtration from 
\cite[Proposition~10.1]{Sagave-S_diagram} to filter the map $M_{\beta}\to M_{\beta+1}$ by a sequence of $\cW$-spaces
\[
M_{\beta}=F_0(M_{\beta+1})\to\dots\to F_{i-1}(M_{\beta+1})\to
F_i(M_{\beta+1})\to\dots
\]
such that $\colim F_i(M_{\beta+1})=M_{\beta+1}$ and $F_{i-1}(M_{\beta+1})\to
F_i(M_{\beta+1})$ is obtained by cobase change from the map $M_{\beta}\boxtimes
F_{(m_1,m_2)^{\oplus i}}^{\cW}(j^{\Box i})/\Sigma_i$, where $j^{\Box i}$ denotes the $i$-fold iterated pushout-product of $j$ (see \cite[Section~A.6]{Sagave-S_diagram}). With 
$(d_1,d_2)$ and $G$ as in the induction statement this in turn gives rise to a filtration of the induced map $M_{\beta}\boxtimes F_{(d_1,d_2)}^{\cW}(*)\to M_{\beta+1}\boxtimes F_{(d_1,d_2)}^{\cW}(*)$ by the $\cW$-spaces $F_i(M_{\beta+1})\boxtimes 
F_{(d_1,d_2)}^{\cW}(*)$ such that the maps in the filtration are obtained by cobase change from the maps $M_{\beta}\boxtimes
F_{(a_1,a_2)}^{\cW}(j^{\Box i})/\Sigma_i$ where $(a_1,a_2)=(m_1,m_2)^{\oplus i}\oplus(d_1,d_2)$. Hence it suffices to check that the latter is an 
$(O(n_2)\times G)$-cofibration at level $(n_1,n_2)$. To this end we observe that the composition of the homomorphism
\[
\Sigma_i\times G\to \Sigma_{m_1^{\oplus i}}\times \Sigma_{m_2^{\oplus i}}\times \Sigma_{d_1}\times \Sigma_{d_2} \to \Sigma_{m_1^{\oplus i}\oplus  d_1}\times 
\Sigma_{m_2^{\oplus i}\oplus d_2}
\] 
with the projection to  $\Sigma_{m_1^{\oplus i}\oplus d_1}$ is injective (since $m_1\geq 1$). 
By the induction hypothesis this implies that  $M_{\alpha}\boxtimes F_{(a_1,a_2)}^{\cW}(*)$ is $(O(n_2)\times \Sigma_i\times G)$-cofibrant at the $(n_1,n_2)$th level for $\alpha<\beta$ and therefore also for $\alpha=\beta$.
Using Lemma~\ref{lem:coarse-pushout-product} it follows that $M_{\beta}\boxtimes F_{(a_1,a_2)}^{\cW}(j^{\Box i})$ is an ($O(n_2)\times\Sigma_i\times G$)-cofibration at level 
$(n_1,n_2)$ and hence that the map of $\Sigma_i$-orbits is an ($O(n_2)\times G$)-cofibration. 
\end{proof}

Recall that the bar resolution $\overline X$ of a $\cW$-space $X$ is not necessarily cofibrant unless $X$ is level-wise cofibrant, cf.\ Lemma~\ref{lem:bar-resolution-cofibrant}. The next result is therefore technically convenient.

\begin{proposition}\label{prop:bar-SW-good}
The bar resolution $\overline X$ of a $\cW$-space $X$ is $\mathbb S^{\cW}$-good.
\end{proposition}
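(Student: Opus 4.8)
The plan is to reduce the statement to Proposition~\ref{prop:O-cofibrant-SW-good}, that is, to produce a cofibrant $\cW$-space mapping to $\overline X$ by a $\cW$-equivalence along which $\mS^{\cW}$ preserves stable equivalences, and for this it suffices to show that $\overline X$ is $O$-cofibrant in the second variable. Recall that $\overline X(n_1,n_2) = B(\cW(-,(n_1,n_2)),\cW,X)$ is the geometric realization of the simplicial space whose $p$-simplices are
\[
\coprod_{\bld k_0,\dots,\bld k_p} \cW(\bld k_1,(n_1,n_2))\times \cW(\bld k_2,\bld k_1)\times\dots\times\cW(\bld k_p,\bld k_{p-1})\times X(\bld k_p).
\]
The $O(n_2)$-action comes entirely from the left action on the first factor $\cW(\bld k_1,(n_1,n_2))$, via $O(n_2)\subseteq \cW((n_1,n_2),(n_1,n_2))$. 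As observed in the proof of Lemma~\ref{lem:W-cofibrant-O-cofibrant}, each such morphism space $\cW((k_1,k_2),(n_1,n_2))$ is a free $O(n_2)$-CW complex (it is a smooth manifold with free $O(n_2)$-action). Hence each space of $p$-simplices, being a coproduct of products of a free $O(n_2)$-CW complex with $O(n_2)$-trivial spaces, is $O(n_2)$-cofibrant in the coarse model structure, and the degeneracy inclusions are $O(n_2)$-cofibrations for the same reason that they are cofibrations in the argument for Lemma~\ref{lem:homotopy-Kan-cofibrant}. By the filtration/pushout description of geometric realization recalled in the proof of Lemma~\ref{lem:homotopy-Kan-cofibrant}, it follows that $\overline X(n_1,n_2)$ is $O(n_2)$-cofibrant in the coarse model structure; this is exactly the statement that $\overline X$ is $O$-cofibrant in the second variable.

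With that in hand, Proposition~\ref{prop:O-cofibrant-SW-good} tells us $\overline X$ is $\mS^{\cW}$-good, and we are done. The main point to be careful about is the interplay between the $O(n_2)$-action used for "$O$-cofibrant in the second variable'' and the $O(n_1)\times O(n_2)$-action on $\overline X(n_1,n_2)$: one must check that forgetting the $O(n_1)$-action still leaves each simplicial level $O(n_2)$-cofibrant, but since the $O(n_2)$-freeness of $\cW((k_1,k_2),(n_1,n_2))$ as a CW complex persists after forgetting the (commuting) $O(n_1)$-action, this is unproblematic. A secondary point, also routine, is that geometric realization of a proper simplicial $O(n_2)$-space built from $O(n_2)$-cofibrations is again $O(n_2)$-cofibrant, which follows from the pushout description exactly as in Lemma~\ref{lem:homotopy-Kan-cofibrant} since the functor $(-)\times\Delta^n$ and pushouts preserve $O(n_2)$-cofibrations. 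I expect the verification that the simplicial levels are $O(n_2)$-cofibrant — unwinding the coproduct-of-products structure and invoking Lemma~\ref{lem:coarse-pushout-product} to handle the base-point inclusions — to be the only place requiring genuine attention; everything else is a direct appeal to results already established.
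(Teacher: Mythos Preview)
Your argument has a genuine gap at the step where you claim each simplicial level of $\overline X(n_1,n_2)$ is $O(n_2)$-cofibrant in the coarse model structure. The $p$-simplices are coproducts of products
\[
\cW(\bld k_0,(n_1,n_2))\times \cW(\bld k_1,\bld k_0)\times\dots\times\cW(\bld k_p,\bld k_{p-1})\times X(\bld k_p),
\]
and while the first factor is indeed a free $O(n_2)$-CW complex, the last factor $X(\bld k_p)$ is an \emph{arbitrary} space with trivial $O(n_2)$-action. Lemma~\ref{lem:coarse-pushout-product} only tells you the product is coarse-cofibrant when the second factor is cofibrant in the \emph{fine} model structure, which for a trivially-acted-on space means cofibrant in $\Top$. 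Since the proposition places no hypothesis on $X$, you cannot assume this; and if $X(\bld k_p)$ is not a CW complex, the product will typically fail to be $O(n_2)$-cofibrant (its orbit space is a product with $X(\bld k_p)$, hence not cofibrant in $\Top$). The sentence immediately preceding the proposition in the paper in fact flags exactly this issue: $\overline X$ is not known to be cofibrant unless $X$ is level-wise cofibrant, and the proposition is meant to circumvent that.

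The paper's proof avoids this by not attempting to show $\overline X$ is $O$-cofibrant. Instead it chooses an acyclic fibration $Y\to X$ with $Y$ cofibrant, so that $\overline Y$ \emph{is} cofibrant by Lemma~\ref{lem:bar-resolution-cofibrant}, and then argues directly that $\mS^{\cW}[\overline Y]\to \mS^{\cW}[\overline X]$ is a level equivalence. The point is that after applying the explicit formula of Proposition~\ref{prop:graded-SW-formula}, the $O(n_2)$-quotient by $S^{n_2}$ is already absorbed into the free $\cW(\bld n^0,(n_1,n_2))$-factor, and the remaining smash with $(\dots\times X(\bld n^p))_+$ manifestly preserves the weak equivalence $Y(\bld n^p)\to X(\bld n^p)$ regardless of any cofibrancy of $X$. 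One then only needs the simplicial spaces to be good in Segal's sense (an $h$-cofibration condition, not a Quillen one) to pass to realizations.
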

\begin{proof}
As in the proof of Proposition~\ref{prop:O-cofibrant-SW-good} we choose an acyclic fibration $Y\to X$ with $Y$ cofibrant.
Then $\overline Y$ is cofibrant by Lemma~\ref{lem:bar-resolution-cofibrant}, so it is enough to show that the map of bar resolutions $\overline Y\to\overline X$ induces a level equivalence 
$\mathbb S^{\cW}[\overline Y]\to \mathbb S^{\cW}[\overline X]$. By the explicit description in Proposition~\ref{prop:graded-SW-formula}, this is equivalent to showing that 
\[
\overline Y(n_1,n_2)_+\wedge_{O(n_2)}S^{n_2}\to \overline X(n_1,n_2)_+\wedge_{O(n_2)}S^{n_2}\
\]
is a weak homotopy equivalence for all $(n_1,n_2)$. Here the domain 
can be identified with the geometric realization of the simplicial space with $p$-simplices 
\[
\bigvee_{\bld n^0,\dots,\bld n^p}\cW(\bld n^0,(n_1,n_2))_+\wedge_{O(n_2)}S^{n_2}
\wedge\big(\cW(\bld n^1,\bld n^0)\times\dots\times\cW(\bld n^p,\bld n^{p-1})\times Y(\bld n^p)\big)_+
\] 
where $\bld n^0$,\dots, $\bld n^p$ runs over all $(p+1)$-tuples of object in $\cW$. There is a similar description of the domain with $X$ instead of $Y$ and the map in question is induced by the maps 
$Y(\bld n^p)\to X(\bld n^p)$. Clearly this is a weak equivalence in each simplicial degree. It is not difficult to check that these are good simplicial spaces in the sense that the degeneracy maps are Hurewicz cofibrations (see 
e.g.\ ~\cite[Proposition~2.5]{Lewis-when-cofibration}), hence the map of geometric realizations is also a weak equivalence. 
\end{proof}

\begin{corollary}\label{cor:bar-replacement-goodness-criterion}
A $\cW$-space $X$ is $\mathbb S^{\cW}$-good if and only if the canonical map $\overline X\to X$ induces a stable equivalence $\mathbb S^{\cW}[\overline X]\to \mathbb S^{\cW}[X]$. \qed
\end{corollary}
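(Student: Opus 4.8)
The plan is to deduce both implications formally from Propositions~\ref{prop:bar-SW-good} and~\ref{prop:SW-good-equivalence}, using only the observation that the evaluation map $\epsilon\colon \overline X\to X$ is a level equivalence and hence a $\cW$-equivalence (since homotopy colimits preserve level equivalences). No new geometric input is required: all the technical work — the homotopy invariance of $\mathbb S^{\cW}$ on bar resolutions, via the ``good simplicial space'' argument — has already been carried out in Proposition~\ref{prop:bar-SW-good}, and this corollary simply repackages it.

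For the ``only if'' direction, suppose $X$ is $\mathbb S^{\cW}$-good. By Proposition~\ref{prop:bar-SW-good} the bar resolution $\overline X$ is also $\mathbb S^{\cW}$-good. Since $\epsilon\colon \overline X\to X$ is a $\cW$-equivalence between two $\mathbb S^{\cW}$-good $\cW$-spaces, Proposition~\ref{prop:SW-good-equivalence} applies directly and shows that $\mathbb S^{\cW}[\overline X]\to \mathbb S^{\cW}[X]$ is a stable equivalence, which is the assertion.

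For the ``if'' direction, assume that $\epsilon$ induces a stable equivalence $\mathbb S^{\cW}[\overline X]\to \mathbb S^{\cW}[X]$. Again $\overline X$ is $\mathbb S^{\cW}$-good by Proposition~\ref{prop:bar-SW-good}, so I may choose a cofibrant $\cW$-space $X'$ together with a $\cW$-equivalence $X'\to \overline X$ for which $\mathbb S^{\cW}[X']\to \mathbb S^{\cW}[\overline X]$ is a stable equivalence. The composite $X'\to \overline X\xr{\epsilon} X$ is then a $\cW$-equivalence out of a cofibrant $\cW$-space, and the induced map $\mathbb S^{\cW}[X']\to \mathbb S^{\cW}[\overline X]\to \mathbb S^{\cW}[X]$ is a composite of stable equivalences, hence a stable equivalence. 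This exhibits exactly the data required for $X$ to be $\mathbb S^{\cW}$-good.

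The only things needing a moment's attention are bookkeeping: the naturality of $\epsilon\colon \overline X\to X$ (so that the stated composites make sense) and the passage from ``level equivalence'' to ``$\cW$-equivalence''. There is no genuine obstacle here; the substance of the argument lives entirely in Proposition~\ref{prop:bar-SW-good}, on which the corollary depends.
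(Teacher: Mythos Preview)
Your proof is correct and is exactly the argument the paper implicitly relies on: the corollary is marked with a bare \qed, and both implications follow immediately from Proposition~\ref{prop:bar-SW-good} together with Proposition~\ref{prop:SW-good-equivalence} and the fact that $\epsilon\colon\overline X\to X$ is a level (hence $\cW$-) equivalence, just as you spell out.
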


%\bibliographystyle{abbrv} 
%\bibliography{virtual-Thom}

% \bib, bibdiv, biblist are defined by the amsrefs package.
\begin{bibdiv}
\begin{biblist}

\bib{Ando_B_G_H_R-units}{article}{
      author={Ando, Matthew},
      author={Blumberg, Andrew~J.},
      author={Gepner, David},
      author={Hopkins, Michael~J.},
      author={Rezk, Charles},
       title={Units of ring spectra, orientations and {T}hom spectra via rigid
  infinite loop space theory},
        date={2014},
        ISSN={1753-8416},
     journal={J. Topol.},
      volume={7},
      number={4},
       pages={1077\ndash 1117},
         url={http://dx.doi.org/10.1112/jtopol/jtu009},
}

\bib{Grayson-higher}{incollection}{
      author={Grayson, Daniel},
       title={Higher algebraic {$K$}-theory. {II} (after {D}aniel {Q}uillen)},
        date={1976},
   booktitle={Algebraic {$K$}-theory ({P}roc. {C}onf., {N}orthwestern {U}niv.,
  {E}vanston, {I}ll., 1976)},
   publisher={Springer},
     address={Berlin},
       pages={217\ndash 240. Lecture Notes in Math., Vol. 551},
}

\bib{Hirschhorn-model}{book}{
      author={Hirschhorn, Philip~S.},
       title={Model categories and their localizations},
      series={Mathematical Surveys and Monographs},
   publisher={American Mathematical Society},
     address={Providence, RI},
        date={2003},
      volume={99},
        ISBN={0-8218-3279-4},
}

\bib{Hollender-V_modules}{article}{
      author={Hollender, J.},
      author={Vogt, R.~M.},
       title={Modules of topological spaces, applications to homotopy limits
  and {$E_\infty$} structures},
        date={1992},
        ISSN={0003-889X},
     journal={Arch. Math. (Basel)},
      volume={59},
      number={2},
       pages={115\ndash 129},
}

\bib{Kro-orthogonal}{article}{
      author={Kro, Tore~August},
       title={Model structure on operads in orthogonal spectra},
        date={2007},
        ISSN={1532-0073},
     journal={Homology, Homotopy Appl.},
      volume={9},
      number={2},
       pages={397\ndash 412},
}

\bib{Lewis-when-cofibration}{article}{
      author={Lewis, L.~Gaunce, Jr.},
       title={When is the natural map {$X\rightarrow \Omega \Sigma X$} a
  cofibration?},
        date={1982},
        ISSN={0002-9947},
     journal={Trans. Amer. Math. Soc.},
      volume={273},
      number={1},
       pages={147\ndash 155},
         url={http://dx.doi.org/10.2307/1999197},
}

\bib{Lind-diagram}{article}{
      author={Lind, John~A.},
       title={Diagram spaces, diagram spectra and spectra of units},
        date={2013},
        ISSN={1472-2747},
     journal={Algebr. Geom. Topol.},
      volume={13},
      number={4},
       pages={1857\ndash 1935},
         url={http://dx.doi.org/10.2140/agt.2013.13.1857},
}

\bib{LMS}{book}{
      author={Lewis, L.~G., Jr.},
      author={May, J.~P.},
      author={Steinberger, M.},
      author={McClure, J.~E.},
       title={Equivariant stable homotopy theory},
      series={Lecture Notes in Mathematics},
   publisher={Springer-Verlag},
     address={Berlin},
        date={1986},
      volume={1213},
        ISBN={3-540-16820-6},
        note={With contributions by J. E. McClure},
}

\bib{May_geometry}{book}{
      author={May, J.~Peter},
       title={The geometry of iterated loop spaces},
   publisher={Springer-Verlag},
     address={Berlin},
        date={1972},
        note={Lectures Notes in Mathematics, Vol. 271},
}

\bib{May-classifying}{article}{
      author={May, J.~Peter},
       title={Classifying spaces and fibrations},
        date={1975},
        ISSN={0065-9266},
     journal={Mem. Amer. Math. Soc.},
      volume={1},
      number={1, 155},
       pages={xiii+98},
}

\bib{May-ring-spaces}{book}{
      author={May, J.~Peter},
       title={{$E_{\infty }$} ring spaces and {$E_{\infty }$} ring spectra},
      series={Lecture Notes in Mathematics, Vol. 577},
   publisher={Springer-Verlag, Berlin-New York},
        date={1977},
        note={With contributions by Frank Quinn, Nigel Ray, and J{\o}rgen
  Tornehave},
}

\bib{Mandell_M-equivariant-orthogonal}{article}{
      author={Mandell, M.~A.},
      author={May, J.~P.},
       title={Equivariant orthogonal spectra and {$S$}-modules},
        date={2002},
        ISSN={0065-9266},
     journal={Mem. Amer. Math. Soc.},
      volume={159},
      number={755},
       pages={x+108},
}

\bib{MMSS}{article}{
      author={Mandell, M.~A.},
      author={May, J.~P.},
      author={Schwede, S.},
      author={Shipley, B.},
       title={Model categories of diagram spectra},
        date={2001},
        ISSN={0024-6115},
     journal={Proc. London Math. Soc. (3)},
      volume={82},
      number={2},
       pages={441\ndash 512},
}

\bib{Rognes-TLS}{incollection}{
      author={Rognes, John},
       title={Topological logarithmic structures},
        date={2009},
   booktitle={New topological contexts for {G}alois theory and algebraic
  geometry ({BIRS} 2008)},
      series={Geom. Topol. Monogr.},
      volume={16},
   publisher={Geom. Topol. Publ., Coventry},
       pages={401\ndash 544},
}

\bib{Rezk_SS-simplicial}{article}{
      author={Rezk, Charles},
      author={Schwede, Stefan},
      author={Shipley, Brooke},
       title={Simplicial structures on model categories and functors},
        date={2001},
        ISSN={0002-9327},
     journal={Amer. J. Math.},
      volume={123},
      number={3},
       pages={551\ndash 575},
}

\bib{RSS_LogTHH-I}{article}{
      author={Rognes, John},
      author={Sagave, Steffen},
      author={Schlichtkrull, Christian},
       title={Localization sequences for logarithmic topological {H}ochschild
  homology},
        date={2015},
        ISSN={0025-5831},
     journal={Math. Ann.},
      volume={363},
      number={3-4},
       pages={1349\ndash 1398},
         url={https://doi.org/10.1007/s00208-015-1202-3},
}

\bib{RSS_LogTHH-II}{article}{
      author={Rognes, John},
      author={Sagave, Steffen},
      author={Schlichtkrull, Christian},
       title={Logarithmic topological {H}ochschild homology of topological
  {$K$}-theory spectra},
        date={2018},
        ISSN={1435-9855},
     journal={J. Eur. Math. Soc. (JEMS)},
      volume={20},
      number={2},
       pages={489\ndash 527},
         url={https://doi.org/10.4171/JEMS/772},
}

\bib{Sagave_log-on-k-theory}{article}{
      author={Sagave, Steffen},
       title={Logarithmic structures on topological {$K$}-theory spectra},
        date={2014},
        ISSN={1465-3060},
     journal={Geom. Topol.},
      volume={18},
      number={1},
       pages={447\ndash 490},
         url={https://doi.org/10.2140/gt.2014.18.447},
}

\bib{Sagave_spectra-of-units}{article}{
      author={Sagave, Steffen},
       title={Spectra of units for periodic ring spectra and group completion
  of graded {$E_\infty$} spaces},
        date={2016},
        ISSN={1472-2747},
     journal={Algebr. Geom. Topol.},
      volume={16},
      number={2},
       pages={1203\ndash 1251},
         url={https://doi.org/10.2140/agt.2016.16.1203},
}

\bib{Schlichtkrull-units}{article}{
      author={Schlichtkrull, Christian},
       title={Units of ring spectra and their traces in algebraic
  {$K$}-theory},
        date={2004},
        ISSN={1465-3060},
     journal={Geom. Topol.},
      volume={8},
       pages={645\ndash 673 (electronic)},
      review={\MR{MR2057776 (2005m:19003)}},
}

\bib{Schlichtkrull-cyclotomic}{incollection}{
      author={Schlichtkrull, Christian},
       title={The cyclotomic trace for symmetric ring spectra},
        date={2009},
   booktitle={New topological contexts for {G}alois theory and algebraic
  geometry ({BIRS} 2008)},
      series={Geom. Topol. Monogr.},
      volume={16},
   publisher={Geom. Topol. Publ., Coventry},
       pages={545\ndash 592},
}

\bib{Schlichtkrull-Thom_symmetric}{article}{
      author={Schlichtkrull, Christian},
       title={Thom spectra that are symmetric spectra},
        date={2009},
     journal={Doc. Math.},
      volume={14},
       pages={699\ndash 748},
}

\bib{Schwede-SymSp}{misc}{
      author={Schwede, Stefan},
       title={Symmetric spectra},
        note={Book project, available at the author's home page},
}

\bib{Sagave-S_diagram}{article}{
      author={Sagave, Steffen},
      author={Schlichtkrull, Christian},
       title={Diagram spaces and symmetric spectra},
        date={2012},
        ISSN={0001-8708},
     journal={Adv. Math.},
      volume={231},
      number={3-4},
       pages={2116\ndash 2193},
         url={http://dx.doi.org/10.1016/j.aim.2012.07.013},
}

\bib{Sagave-S_group-completion}{article}{
      author={Sagave, Steffen},
      author={Schlichtkrull, Christian},
       title={Group completion and units in {$\cI$}-spaces},
        date={2013},
        ISSN={1472-2747},
     journal={Algebr. Geom. Topol.},
      volume={13},
      number={2},
       pages={625\ndash 686},
         url={http://dx.doi.org/10.2140/agt.2013.13.625},
}

\bib{Stolz_equivariant}{misc}{
      author={Stolz, Martin},
       title={Equivariant structure on smash powers of commutative ring
  spectra},
        date={2011},
        note={Ph.D. Thesis, University of Bergen},
}

\bib{Thomason-homotopy-colimt}{article}{
      author={Thomason, R.~W.},
       title={Homotopy colimits in the category of small categories},
        date={1979},
        ISSN={0305-0041},
     journal={Math. Proc. Cambridge Philos. Soc.},
      volume={85},
      number={1},
       pages={91\ndash 109},
}

\end{biblist}
\end{bibdiv}

\end{document}